\theoremstyle{plain}
\newcommand{\Div}{\mbox{div}}
\newcommand{\tr}{\mbox{tr}}
\newcommand{\Ric}{\mbox{Ric}}
 \newcommand{\Hess}{\mbox{Hess}}
\newtheorem{theorem}{Theorem}[section]
\newtheorem{lemma}[theorem]{Lemma}
\newtheorem{theoremL}{Theorem}
\newtheorem{corollary}[theorem]{Corollary}
\newtheorem{example}[theorem]{Example}
\newtheorem{proposition}[theorem]{Proposition}
\theoremstyle{remark}
\newtheorem{remark}[theorem]{Remark}
\numberwithin{equation}{section}
\begin{document}

\title[Critical metrics and curvature of metrics with constraints]{Critical metrics and curvature of metrics with unit volume or unit area of the boundary}

\author{Tiarlos Cruz }
\address{Universidade Federal de Alagoas\\
Instituto de Matemática\\ 57072-970,\linebreak
Maceió - AL, Brazil}
\email{cicero.cruz@im.ufal.br}
\thanks{Research supported in part by CNPq (311803/2019-9) and FAPITEC/SE/Brazil.}
\author{Almir Silva Santos}
\address{Universidade Federal de Sergipe\\
Departamento de Matemática\\
49100-000, São Cristóvão - SE, Brazil}
\email{almir@mat.ufs.br}

\begin{abstract}
Given a smooth compact manifold with boundary, we study variational properties of the volume functional and of the area functional of the boundary, restricted to the space of the Riemannian metrics with prescribed curvature. We obtain a sufficient and necessary condition for a metric to be a critical point.  As a by-product, a very natural analogue of V-statics metrics is obtained. In the second part, using the Yamabe invariant in the boundary setting, we solve the Kazdan-Warner-Kobayashi problem in a compact manifold with boundary. For several cases, depending on the signal of the Yamabe invariant, we give sufficient and necessary condition for a smooth function to be the scalar or mean curvature of a metric with constraint on the volume or area of the boundary. 
\end{abstract}

\keywords{Critical metrics, prescribed curvature, Yamabe invariant}

	\subjclass[2010]{53C21, 53C20}

\maketitle


\section{Introduction}

Two of the most remarkable topics in differential geometry  are   pheno-mena involving scalar curvature and volume comparison results. Although the interplay  between them, it is well known, we highlight that any relation they  may have, requires a finer analysis. 
Among them, the most fundamental is that the stationary points of the total scalar curvature functional on the space of unit volume  metrics are the Einstein metrics, which  are metrics of constant scalar curvature. 
Considering a  modified problem,  P. Miao and  L.-F. Tam \cite{MT} have studied variational properties for the volume functional on the space of metrics whose scalar curvature is equal to a given constant (see also \cite{MT2}), which is important to locally achieve simultaneously small prescribed perturbation of the scalar curvature and of the volume by a compactly supported deformation of the metric due to a result of  J. Corvino, M. Eichmair and P. Miao \cite{CEM}.

In this paper,  from a variational point of view,  we  investigate the influence of the volume and the area of the boundary   on the most basic geometric invariants in a manifold,
 namely the scalar curvature and the mean curvature of the boundary.

In \cite{CV},  in order to prescribe the curvature of manifolds with boundary, the first named author and F. Vitório proved that the map $g\mapsto (R_g,2H_g)$ is  almost always a surjection, where $R_g$ and $H_g$ denote the  scalar curvature  and  the mean curvature of the boundary of the metric $g$, respectively. In this work we  study  this same map under the effect of the volume and the area of the boundary through two approaches:  on the one hand,  we  vary the volume or area of the boundary and, on the other hand, we restrict it to metrics of unit volume  or unit  area of the boundary. This  second plays a fundamental role  to the problem of prescribing  curvature, the so-called modified Kazdan-Warner-Kobayashi problem.
 
 In the first part of this work, given constants $\kappa$ and $\tau$, we consider the  map $\Psi(g)=(R_g, 2 H_{g}, -2\kappa\mbox{Vol}(g),-2\tau\mbox{Area}(g))$ defined on the space of Riemannian metrics of a smooth compact  manifold with non empty boundary. Denoting by $\mathcal{S}^*_g$ the $L^2$-formal adjoint of the linearization of $\Psi,$ we observe that a triple $(V,\kappa,\tau)$ in the kernel of $\mathcal{S}^*_g$ gives  a  natural analogue for V-static equations. More precisely, a Riemannian manifold $(M,g)$ with non empty boundary $\Sigma$ will satisfies this \textit{V-static-type equation} if there is a nontrivial smooth solution $V$ of the equation
\begin{equation}\label{firststatic}
\left\{\begin{array}{rcll}
     -(\Delta_g V)g+\mbox{Hess}_gV-V\Ric_g & = & \kappa g & \mbox{ on } M\\
     \vspace{-0,2cm}\\
     \displaystyle\frac{\partial V}{\partial \nu}g-V\Pi_{g} & = & \tau g & \mbox{ in }\Sigma,
\end{array}\right.
\end{equation}
where  $\kappa,\tau\in\mathbb R$. Here $\Delta_g $, $ \mbox{Hess}_g$ and $ \Ric_g$  are the Laplacian, the Hessian and  the Ricci
curvature of $ g,$ respectively, and $\Pi_{g}$ is the second fundamental form of the boundary with respect to the outward unit normal $\nu$. This system is invariant under constant rescaling of the metric.

A metric satisfying \eqref{firststatic}  can also be seen as critical point of a Riemannian functional (see Proposition \ref{properties}). Besides, as in \cite{MT}, we give  sufficient and necessary conditions for a metric to be a critical point of the functionals $g\mapsto \mbox{Area($\Sigma$,$g$)}$ and $
g\mapsto \mbox{Vol($M$,$g$)}$. 
Before we state our first theorem, we introduce the following sets.
Let $\mathcal{M}^{k,2}$ be the space of $W^{k,2}$ metrics on $M$, where $k$ is a positive integer. Consider the spaces
$$\mathcal{M}_{0,c}=\left\{g\in \mathcal{M}^{k,2}; R_g=0\;\mbox{ and }\;H_{g}=c\right\}$$
and $$\mathcal{M}_{c,0}=\left\{g\in \mathcal{M}^{k,2}; R_g=c\;\mbox{ and }\;H_{g}=0\right\},$$
where $c$ is constant. 
These spaces are called of \textit{manifolds of Riemannian metrics with prescribed curvature}, provided  they have locally a manifold structure, see  Propositions \ref{subman} and  \ref{subman2}. 
We investigate the problem of finding stationary points for the area functional restricted to $\mathcal{M}_{0,c}$ and the volume functional  restricted to $\mathcal{M}_{c,0}$ (Theorem \ref{generalteo1}).

\begin{theoremL}\label{teo1}
Let $(M,g)$ be a compact Riemannian manifold with non emp-ty boundary $\Sigma$. Let $c\in\mathbb R$ be a fixed constant.
\begin{enumerate}[(a)]

\item  Suppose  $g\in \mathcal{M}_{0,c}$ is a metric such that the first eigenvalue of the Steklov boundary operator $\partial/\partial \nu-c/(n-1)$ is positive. Then 
$g$ is a critical point of the area functional $
g\mapsto \mbox{Area($\Sigma$,$g$)}
$ in $ \mathcal{M}_{0,c}$
if and only if there exists $V\in C^\infty(M)$ such that
\begin{equation*}
\left\{
  \begin{array}{rccl}
-(\Delta_g V)g+\mbox{Hess}_gV-V\Ric_g& = & 0 & \mbox{in } M,\\
\vspace{-0,2cm}\\
\displaystyle\frac{\partial V}{\partial \nu}g-V\Pi_{g} & = & g & \mbox{on }\Sigma.
\end{array}
  \right.
 \end{equation*} 
\item Suppose  $g\in \mathcal{M}_{c,0}$ is a metric such that the first eigenvalue of the ope-rator $(n-1)\Delta_g + c$ with  Neumann boundary condition is positive. Then 
$g$ is a critical point of the volume functional $
g\mapsto \mbox{Vol(M,g)}
$
on $ \mathcal{M}_{c,0}$
if and only if there exists $V\in C^\infty(M)$ such that
\begin{equation*}
\left\{
  \begin{array}{rccl}
-(\Delta_g V)g+\mbox{Hess}_gV-V\Ric_g & = & g & \mbox{in } M,\\
\vspace{-0,2cm}\\
\displaystyle\frac{\partial V}{\partial \nu}g-V\Pi_{g} & = & 0 &\mbox{on } \Sigma.
\end{array}
  \right.
 \end{equation*} 
\end{enumerate}

\end{theoremL}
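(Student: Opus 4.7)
The plan is to view the theorem as a Lagrange-multiplier statement on the infinite-dimensional manifolds of constrained metrics. First, under the Steklov hypothesis of part (a), Proposition \ref{subman} guarantees that $\mathcal{M}_{0,c}$ is near $g$ a smooth Banach submanifold whose tangent space is
\[
T_g\mathcal{M}_{0,c}=\{h\in W^{k,2}(S^2M):R'_g(h)=0\text{ on }M,\ H'_g(h)=0\text{ on }\Sigma\};
\]
Proposition \ref{subman2} provides the analogous statement for $\mathcal{M}_{c,0}$ under the Neumann hypothesis in part (b). So in both parts, criticality of the target functional is equivalent to its differential annihilating the kernel of the linearized constraint map $(R'_g,2H'_g)$, and the same propositions supply the closed-range property that identifies this annihilator with the image of the $L^2$-adjoint $\mathcal{S}^*_g$.

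The central computation is the explicit form of $\mathcal{S}^*_g$. Starting from
\[
R'_g(h)=-\Delta_g(\tr_g h)+\Div_g\Div_g h-\langle h,\Ric_g\rangle
\]
and the standard boundary formula for $H'_g(h)$, I would carry out a double integration by parts (using Green's second identity on the bulk terms) and show that, for every $V\in C^\infty(M)$ and every symmetric $2$-tensor $h$,
\[
\int_M V\,R'_g(h)\,dv_g+\int_\Sigma 2V\,H'_g(h)\,da_g=\int_M\langle\mathcal{T}_1V,h\rangle\,dv_g+\int_\Sigma\langle\mathcal{T}_2V,h\rangle\,da_g,
\]
where $\mathcal{T}_1V=-(\Delta_gV)g+\Hess_gV-V\Ric_g$ and $\mathcal{T}_2V=\tfrac{\partial V}{\partial\nu}g-V\Pi_g$. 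The boundary contributions produced by integrating $R'_g(h)$ by parts must be shown to recombine with $2VH'_g(h)$ so that all terms involving normal derivatives of $h$, of $\tr_gh$, and of $\Div h$ on $\Sigma$ collapse into the single tensor $\mathcal{T}_2V$; this reorganization is the main technical step.

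For part (a), criticality of the area functional on $\mathcal{M}_{0,c}$ means the linear form $h\mapsto\mathrm{Area}'_g(h)=\tfrac12\int_\Sigma\langle g,h\rangle\,da_g$ vanishes on $T_g\mathcal{M}_{0,c}$. By the closed-range property it must then lie in the image of $\mathcal{S}^*_g$; writing $\mathrm{Area}'_g=\mathcal{S}^*_gV$ and matching the bulk and boundary pieces in the adjoint identity forces $\mathcal{T}_1V=0$ in $M$ and $\mathcal{T}_2V=g$ on $\Sigma$ (the overall factor of $2$ is absorbed in the factor $2H_g$ appearing in the definition of $\Psi$). A standard elliptic regularity argument for this overdetermined scalar equation (its trace reduces to $(n-1)\Delta_g V=0$ when $R_g=0$) promotes $V$ to a smooth function. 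Conversely, if such a $V$ exists then any $h\in T_g\mathcal{M}_{0,c}$ makes the left-hand side of the adjoint identity vanish, while under the stated equations the right-hand side reduces to $\int_\Sigma\langle g,h\rangle\,da_g=2\,\mathrm{Area}'_g(h)$, yielding criticality.

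Part (b) is entirely parallel: replacing the area by the volume gives $\mathrm{Vol}'_g(h)=\tfrac12\int_M\langle g,h\rangle\,dv_g$, which by the same scheme yields $\mathcal{T}_1V=g$ in $M$ and $\mathcal{T}_2V=0$ on $\Sigma$. The main difficulty I expect throughout is the bookkeeping in the adjoint identity of the second paragraph; once it is secured, the eigenvalue hypotheses enter only through Propositions \ref{subman} and \ref{subman2}, and the Lagrange-multiplier argument is essentially automatic.
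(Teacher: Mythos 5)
Your scheme is a genuinely different, more abstract Lagrange-multiplier argument than the paper's, but the crucial step is asserted rather than proved, and it is precisely where the real work lies. Propositions \ref{subman} and \ref{subman2} are implicit-function-theorem statements: their proofs show that the linearized constraint map $D\Phi_{g_0}$ is surjective with split kernel, which does give closed range. But the annihilator of $\ker D\Phi_{g_0}$ is then identified with the image of the \emph{Banach-space adjoint}, i.e.\ with a pair of \emph{distributions} $(\mu,\lambda)\in\bigl(W^{k-2,2}(M)\bigr)^*\oplus\bigl(W^{k-3/2,2}(\Sigma)\bigr)^*$. You write ``$\mathrm{Area}'_g=\mathcal{S}_g^*V$'' as if a single smooth function $V$ were already in hand; in fact two independent multipliers appear a priori, one for the interior constraint $R_g=0$ and one for the boundary constraint $H_g=c$, and one must show both that they are represented by functions (not just distributions), that these functions agree on $\Sigma$, and that they are smooth up to the boundary before the Green identity \eqref{green} can be invoked to read off the tensorial system. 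Your remark that ``a standard elliptic regularity argument for this overdetermined scalar equation promotes $V$ to a smooth function'' presupposes the tensorial system for $V$, which is what you are trying to derive; the bootstrap would have to begin from the weak multiplier identity, and that is a nontrivial argument in its own right.

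The paper sidesteps all of this by being entirely constructive. It first solves the simple scalar boundary value problem \eqref{fred2} (respectively \eqref{Ne}) via Fredholm alternative, producing a concrete smooth candidate $V$ from the start. It then builds, for every symmetric $2$-tensor $h$, an explicit one-parameter family $\Phi(t)^{4/(n-2)}(g+th)$ lying inside $\mathcal{M}_{0,c}$ (or $\mathcal{M}_{c,0}$); this is the content of Propositions \ref{target}, \ref{target2} and Corollary \ref{derivative}, proved by sub- and super-solutions and Schauder estimates, and it replaces the abstract annihilator argument. Differentiating the area or volume along these curves and integrating by parts against the explicit $V$ then forces the full tensorial V-static-type system, with no need to discuss the regularity of an abstract Lagrange multiplier. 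The converse direction in your sketch is correct and essentially identical to the paper's. In short: your plan is plausible and could likely be carried through with additional functional-analytic and regularity work, but as written the central closed-range-and-regularity step is a gap, and filling it would amount to roughly the same effort that the paper invests in Propositions \ref{target}--\ref{target2}.
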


Let us give another motivation of Theorem \ref{teo1}.
Inspired by the Yamabe problem for manifolds without boundary, J. F. Escobar \cite{E3} introduced the so-called \textit{total scalar curvature plus total mean curvature functional} given by
\begin{equation}
\label{functional} \displaystyle{F(g)=\frac{n-2}{4(n-1)}\int_{M}R_{g}dv+\frac{n-2}{2}\int_{\partial M}H_{g}da}
\end{equation} 
defined on the space of metrics in a manifold with boundary and with dimension $n\geq 3$.
 The critical points of \eqref{functional} on the space of Riemannian metrics satisfying certain volume and area constraints in a  manifold  with boundary are  precisely the Einstein metrics with umbilical boundary, cf. \cite{A} (compare also Proposition \ref{Einstein} and \ref{Umbilical}).  Theorem \ref{teo1} can be seen as a \textit{counterpart} for this variational characterization.
 
Now consider  a modified action given by the quotient of \eqref{functional}  divided by a normalization term, depending on the volume or the area of the boundary. First, remember that a conformal class $C$ of a manifold $M$ is a collection of metrics such that any two metrics $g_1$ and $g_2$ in $C$ differ by a positive function, i.e, $g_2=e^fg_1$, for some $f\in C^\infty(M)$. Now, for a fixed conformal class $C$, we define the Yamabe constant of $(M,C)$ by
\begin{eqnarray}\label{eq014}
    Y_\lambda(M,C):=\inf_{g\in C}\frac{F(g)}{N_\lambda(g)},
\end{eqnarray}
where $F(g)$ is given by \eqref{functional} for $n\geq 3$,  $\lambda\in\{0,1\}$ and 
\begin{equation*}\label{eq016}
    N_\lambda(g)=\lambda\mbox{Vol}(M,g)^{\frac{n-2}{n}}+(1-\lambda)\mbox{Area}(\partial M,g)^{\frac{n-2}{n-1}}.
\end{equation*}
It is a well known fact that $-\infty\leq Y_\lambda(M,C)\leq Y_\lambda(\mathbb S^n_+,C_0)$, where $C_0$ is the conformal class of the standard metric on the hemisphere. The constants that appears in \eqref{functional} are normalizations, thus by Gauss-Bonnet Theorem, we consider the Yamabe constant and the Yamabe invariant for surface given by
$$\sigma_\lambda(M)=Y_\lambda(M,C)=2\pi\chi(M),$$
while the Yamabe invariant, which can be seen as a natural differential-topological invariant, of a smooth manifold $M$ with non empty boundary and dimension $n\geq 3$ is defined by
\begin{equation}\label{eq015}
    \sigma_\lambda(M)=\sup_CY_\lambda(M,C).
\end{equation}

We make use of the Yamabe invariant and of the variational characte-rization of the map $g\mapsto (R_g,2H_g)$ to prescribe the curvature of manifolds with boundary under some constraints on the volume or boundary area (for  cases without any  constraint see for instance \cite{CV,E5,LR,LMR}). This problem was first approached by O. Kobayashi \cite{K} in the closed case, which was based in the  Kazdan-Warner problem \cite{KW3,KW2} of finding metrics with prescribed scalar curvature. More precisely, Kobayashi  solved in most cases the so-called  modified prescribed scalar curvature problem for metrics with unit volume. The remaining cases turn out to be more difficult and it had been solved by S. Matsuo \cite{M}, thereby completing the solution of the problem. The key ingredient in Matsuo's proof was  a fundamental gluing result for constant scalar curvature metrics proved by J. Corvino, M. Eichmair and P. Miao in \cite{CEM}. His proof uses strongly that a manifold contains a non V-static domain.

In the two-dimensional case, if $\partial M$ has unit Length (resp. $M$ has unit area), and $M$ is a bounded domain $\Omega$ in $\mathbb R^2$ with smooth boundary (resp. compact surface with geodesic boundary), the geodesic curvature $k$ (resp. Gauss curvature $K$)  satisfies one of the following conditions:

\medskip

\begin{enumerate}[(I)]
\item\label{item011}  $k =2\pi\chi(M)$ (resp. $K=2\pi\chi(M)$);
\item\label{item012} $\min k<2\pi\chi(M)<\max k$ (resp. $\min K<2\pi\chi(M)<\max K$).
\end{enumerate}

\medskip

In fact, we prove that \eqref{item011} and \eqref{item012} are  necessary and sufficient conditions to the modified Kazdan-Warner-Kobayashi problem for surfaces. Thus, solving  completely this question  in dimension $n=2.$
\begin{theoremL}\label{dim}
 Let $M$ be a smooth compact surface with non empty boundary. 
 \begin{enumerate}[(a)]
 \item\label{item016} Assume that $\partial M$ is geodesic.  A function $f\in C^\infty(M)$ is the Gauss curvature of some metric $g$ on $M$ with geodesic boundary and $\mbox{Area}(M,g)=1$  if and only if it  satisfies either the condition \eqref{item011} or \eqref{item012}.
 \item\label{item018} Assume that  $M\subset\mathbb{R}^2$ .   A function $f\in C^\infty(\partial M)$ is the geodesic curvature of a flat metric $g$ on $M$ with  $\mbox{Length}(\partial M,g)=1$ if and only if it satisfies either the condition  \eqref{item011} or \eqref{item012}.
 \end{enumerate}
\end{theoremL}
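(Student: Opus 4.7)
\textbf{Proof plan for Theorem~\ref{dim}.}

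\emph{Necessity.} I would apply the Gauss--Bonnet theorem. In part~\eqref{item016}, any admissible $g$ has $k_g\equiv 0$, $\mathrm{Area}(M,g)=1$, and $K_g=f$, so $\int_M f\,dA_g = 2\pi\chi(M)$. This forces either $f\equiv 2\pi\chi(M)$, giving~\eqref{item011}, or $\min f<2\pi\chi(M)<\max f$, giving~\eqref{item012}. Part~\eqref{item018} is analogous: on a flat metric with unit boundary length, Gauss--Bonnet yields $\int_{\partial M} f\,ds_g = 2\pi\chi(M)$, and the same dichotomy follows.

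\emph{Sufficiency in the constant case~\eqref{item011}.} For~\eqref{item016}, the uniformization theorem for compact surfaces with boundary produces a metric of constant Gauss curvature with geodesic boundary; rescaling to unit area makes the curvature equal to $2\pi\chi(M)$. For~\eqref{item018}, $M\subset\mathbb{R}^2$ carries the flat Euclidean metric; a conformal mapping (Riemann map when $\chi(M)=1$, or standard uniformization to a round annulus when $\chi(M)=0$) delivers a flat representative with constant geodesic curvature and, after a rigid rescaling, unit boundary length.

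\emph{Sufficiency in the non-constant case~\eqref{item012}.} I would reduce to a nonlinear PDE by conformal deformation. Taking $g_0$ as in case~\eqref{item011} and writing $g=e^{2u}g_0$, the geodesic-boundary constraint in~\eqref{item016} becomes $\partial_\nu u = 0$, and the prescribed-curvature condition $K_g=f$ becomes
\[
-\Delta_{g_0}u + K_{g_0} = f\,e^{2u}\ \text{in}\ M,\qquad \partial_\nu u = 0\ \text{on}\ \partial M, \qquad \int_M e^{2u}\,dA_{g_0}=1.
\]
In~\eqref{item018}, keeping $g$ flat forces $u$ harmonic and the boundary condition $k_g=f$ becomes the nonlinear Steklov problem $\partial_\nu u + k_{g_0} = f\,e^{u}$ together with $\int_{\partial M}e^u\,ds_{g_0}=1$. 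These are Kazdan--Warner-type equations with Neumann (resp.\ Steklov) boundary: for $\chi(M)<0$ the sign of $K_0$ allows a sub/super-solution argument; for $\chi(M)=0$ the strict sign change in~\eqref{item012} makes a Moser--Trudinger-based constrained minimization coercive.

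\emph{Main obstacle.} The genuine difficulty is the disk case $\chi(M)=1$, where the PDE is of positive-curvature type and Kazdan--Warner-style integral identities threaten solvability as on $S^2$. The plan is to bypass the obstruction by first precomposing $f$ with a diffeomorphism of $M$ chosen (in the spirit of Berger) so that the modified problem is variationally solvable, and then pulling the solution back. Condition~\eqref{item012}, with its strict inequalities $\min f<2\pi\chi(M)<\max f$, supplies exactly the room this deformation argument needs.
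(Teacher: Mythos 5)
Your necessity argument (Gauss--Bonnet) and your handling of the constant case~\eqref{item011} via uniformization/Riemann mapping are fine, and agree in substance with the paper, which cites Theorem~1.1 of~\cite{CV} to produce the constant-curvature background metric before normalizing. The divergence, and the gap, is in the non-constant case~\eqref{item012}.

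The paper does \emph{not} solve a Kazdan--Warner conformal PDE. It instead uses Proposition~\ref{min-max}, which combines the Approximation Lemma~\ref{appr} (find a diffeomorphism $\varphi$ so that $f\circ\varphi$ is $L^p$- or $W^{1/2,p}$-close to the constant background curvature) with Theorem~\ref{implicit}, an implicit function theorem for the full map $g\mapsto(R_g,2H_g)$ on the space of \emph{all} metrics subject to the volume or area constraint. Surjectivity there hinges only on the injectivity of $\mathcal S_{g_0}^*$, which can always be arranged by a small non-conformal perturbation of $g_0$; the resulting $g_1$ with $\Psi(g_1)=(0,f\circ\varphi)$ is then pulled back by $\varphi^{-1}$. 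Your plan, by contrast, reduces to the conformal Liouville/Steklov equation with a nonlinear constraint, and you correctly flag the disk case $\chi(M)=1$ as the obstacle. But the patch you propose — precompose $f$ by a diffeomorphism ``in the spirit of Berger'' so the conformal problem becomes variationally solvable — does not close the gap. On the disk, the Kazdan--Warner/Bol--Pohozaev obstruction sits at the linearized level: at the constant-curvature metric the conformal linearization has nontrivial cokernel coming from the conformal group of the disk, so making $f\circ\varphi$ close to the constant $2\pi$ is precisely the regime where the conformal implicit function theorem fails, and a variational argument still faces loss of compactness along the M\"obius group. Nothing in your sketch explains which $\varphi$ works or why.

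The fix is to abandon the restriction to a conformal class in the non-constant step: after the diffeomorphism approximation, invoke the linearized stability of $g\mapsto(R_g,2H_g)$ on the space of all metrics with unit area (or unit boundary length), as in Proposition~\ref{min-max}. This is where the genuine work of the paper lies, and it is exactly the step that makes the disk case unproblematic. Your sub/super-solution and Moser--Trudinger observations for $\chi(M)\le 0$ are plausible as alternative routes for those subcases, but they are not needed once the full-metric-space implicit function theorem is available, and they do not extend to $\chi(M)=1$.
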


Next we   establish  analogous results in higher dimensions in the case where the  Yamabe invariant is non-positive. One recall that a smooth $n$-dimensional manifold $M$ with non empty boundary has   $\sigma_\lambda(M)\leq 0$ if and only if $M$ does not admit a metric of positive scalar curvature and minimal boundary, for $\lambda=1$, and with scalar curvature equal to zero and positive mean curvature on the boundary, for $\lambda=0$. We should remark that there are results giving topological obstruction to the existence of metrics in ma-nifolds with boundary satisfying conditions on the curvature. For instance, a version of the positive mass theorem \cite{ALB} can be used to prove that does not exist a metric on $(\mathbb{T}
^{n-1}\times[0, 1])\#M_0$ with nonnegative scalar curvature and nonnegative mean curvature along the boundary for $3\leq n\leq 7,$ provided $M_0$  is non-flat, see \cite{BC,Ch}.

\begin{theoremL}\label{prescribing thm}
Let $M^n$ be a compact manifold with non empty boundary $\Sigma$, dimension $n\geq 3$ and $\sigma_1(M)\leq 0.$ 
\begin{enumerate}[(a)]
    \item \label{item002} If $\sigma_1(M)> Y_1(M,C)$ for any conformal class $C,$ then  $f\in C^\infty( M)$  is the scalar curvature of some metric $g$ on $M$ with minimal boundary and $\mbox{Vol}(M,g)=1$  if  and only if  
\begin{equation}\label{eq004}
    \min f<\frac{4(n-1)}{n-2}\sigma_1(M).
\end{equation}

\item\label{item003} If $\sigma_1(M)= Y_1(M,C)$ for some conformal class $C,$ then  $f\in C^\infty( M)$ is the scalar curvature of some metric $g$ on $M$ with minimal boundary and $\mbox{Vol}(M,g)=1$  if and only if  either $f$ is a constant equal to $\frac{4(n-1)}{n-2}\sigma_1(M)$ or \eqref{eq004} holds. \end{enumerate}
Moreover, a metric  $g$ with $R_g=\frac{4(n-1)}{n-2}\sigma_1(M),$ $H_g=0$ and $\mbox{Vol}(M,g)=1$ is Einstein with totally geodesic boundary. 
\end{theoremL}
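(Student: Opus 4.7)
The plan is to split the proof into the necessity and sufficiency directions, using the boundary conformal Laplacian together with Escobar's resolution of the Yamabe problem on manifolds with boundary throughout, and to conclude with the Einstein rigidity claim separately. Set $\kappa:=\frac{4(n-1)}{n-2}\sigma_1(M)\le 0$.

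For necessity, suppose $g$ satisfies $R_g=f$, $H_g=0$ and $\mbox{Vol}(M,g)=1$. Because $Y_1(M,[g])\le\sigma_1(M)\le 0$, the boundary Yamabe equation in $[g]$ is subcritical and admits a positive solution $u\in C^\infty(M)$ producing $\hat g=u^{4/(n-2)}g$ with $R_{\hat g}=\lambda:=\frac{4(n-1)}{n-2}Y_1(M,[g])$ constant, $H_{\hat g}=0$, and $\mbox{Vol}(M,\hat g)=1$. The conformal transformation laws yield
\begin{equation*}
-\frac{4(n-1)}{n-2}\Delta_g u+f\,u=\lambda\,u^{(n+2)/(n-2)}\ \text{in }M,\qquad \partial_\nu u=0\ \text{on }\Sigma.
\end{equation*}
Since $\int_M u^{2n/(n-2)}\,dv_g=\mbox{Vol}(M,\hat g)=1=\mbox{Vol}(M,g)$, the function $u$ must exceed $1$ somewhere, so at a maximum point $x_0$ the strong maximum principle combined with the Neumann condition forces $\Delta_g u(x_0)\le 0$, and the equation collapses to $f(x_0)\le\lambda\,u(x_0)^{4/(n-2)}$. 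Because $\lambda\le 0$ and $u(x_0)^{4/(n-2)}\ge 1$ this gives $\min f\le f(x_0)\le\lambda\le\kappa$. In case (a) the strict gap $Y_1(M,[g])<\sigma_1(M)$ upgrades this to a strict inequality. In case (b), equality $\min f=\kappa$ forces equality at each step, hence $u(x_0)=1$, $\max u=1$, and then (by the volume identity) $u\equiv 1$; thus $\hat g=g$ and $f\equiv\kappa$.

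For sufficiency, if $f\equiv\kappa$ in case (b) one simply takes $g$ to be a Yamabe minimizer of a conformal class realizing $\sigma_1(M)=Y_1(M,C)$, rescaled to unit volume, and there is nothing further to prove. Otherwise $\min f<\kappa$. I would fix a background unit-volume metric $h$ with $R_h$ a negative constant close to $\kappa$ and $H_h=0$, obtained from an Escobar-Yamabe metric in a suitable conformal class, and then look for $g$ conformal to a pulled-back metric $\varphi^{*}h$ satisfying
\begin{equation*}
-\frac{4(n-1)}{n-2}\Delta_{\varphi^{*}h}u+R_h\,u=f\,u^{(n+2)/(n-2)}\ \text{in }M,\qquad \partial_\nu u=0\ \text{on }\Sigma,
\end{equation*}
with $u>0$. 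A Kazdan-Warner type choice of diffeomorphism $\varphi$ arranges the sign pattern of $f$ against the negative constant $R_h$ so that small positive constant sub-solutions and large positive constant super-solutions exist; a standard monotone iteration compatible with the Neumann boundary condition then produces a smooth positive $u$, and rescaling $g=u^{4/(n-2)}\varphi^{*}h$ to unit volume completes the construction. The main obstacle is the choice of $\varphi$: whereas in the closed Kobayashi setting one appeals to the non V-static domains provided by \cite{CEM}, here one needs the boundary analogue, for which I would use the V-static-type rigidity for metrics with prescribed scalar curvature and minimal boundary that follows from Theorem \ref{teo1}.

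Finally, if $R_g=\kappa$, $H_g=0$ and $\mbox{Vol}(M,g)=1$, uniqueness of the Yamabe solution in a non-positive conformal class implies that $g$ realizes $Y_1(M,[g])$, and the identity $\frac{4(n-1)}{n-2}Y_1(M,[g])=\kappa=\frac{4(n-1)}{n-2}\sigma_1(M)$ makes $g$ a global maximizer of the Yamabe invariant. Therefore $g$ is a critical point of the total scalar plus mean curvature functional $F$ restricted to the space of unit-volume metrics with minimal boundary, and the variational characterization recalled in Propositions \ref{Einstein} and \ref{Umbilical} forces $g$ to be Einstein with umbilical boundary; umbilicity together with $H_g=0$ then gives totally geodesic boundary.
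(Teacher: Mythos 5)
Your necessity argument is sound and essentially re-derives, via the conformal Yamabe PDE and a maximum-principle evaluation at $\max u$, the content of the paper's Lemma~\ref{supinf}, which the paper simply cites; this is a legitimate alternative route, though you should note that the boundary-maximum case and the degenerate case $\sigma_1(M)=0$ (where $\lambda=0$ and the factor $u(x_0)^{4/(n-2)}$ drops out) need a slightly separate argument (for $\lambda=0$ the equation says $u$ is subharmonic with Neumann data, hence constant). Your treatment of the final Einstein rigidity also matches the paper's use of Proposition~\ref{Einstein}.

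The sufficiency direction, however, has two genuine gaps. First, your monotone-iteration scheme requires constant sub- and super-solutions for $-\tfrac{4(n-1)}{n-2}\Delta u + R_h u = f\,u^{\frac{n+2}{n-2}}$ with $R_h<0$. A constant super-solution $u\equiv K$ would require $R_h\ge f\,K^{4/(n-2)}$ everywhere, which forces $\max f<0$. But the hypothesis is only $\min f<\kappa\le 0$; $f$ may be positive on a large set, and neither pre-composing $f$ with a diffeomorphism nor pulling $h$ back changes the range of the right-hand side, so the super-solution simply does not exist. Second, and more fundamentally, you solve for a conformal factor $u$, set $g=u^{4/(n-2)}\varphi^{*}h$, and then ``rescale to unit volume''; but scaling $g\mapsto t\,g$ multiplies the scalar curvature by $t^{-1}$, so the rescaled metric no longer has scalar curvature $f$. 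You have not controlled the volume produced by the iteration, and there is no post-hoc fix. The paper circumvents exactly this by first manufacturing, for any constant $c<\kappa$, a unit-volume minimal-boundary metric with $R\equiv c$ (via Lemma~\ref{fact 3}, Theorem~\ref{fact 1}, and a harmless rescaling applied while the curvature is still constant), and then invoking Proposition~\ref{min-max}, whose proof combines the approximation-by-diffeomorphism Lemma~\ref{appr} with the constrained implicit function Theorem~\ref{implicit}: the latter is formulated directly on the slice $\{\mbox{Vol}=1\}$, so the volume constraint is respected throughout, and the diffeomorphism invariance of scalar curvature, mean curvature, and volume finishes the job. Your appeal to non-V-static gluing or to Theorem~\ref{teo1} is also misplaced here; what is actually needed is the injectivity of $\mathcal{S}_{g_0}^{*}$ at the constant-curvature background (the paper's adaptation of Propositions 3.1/3.3 of \cite{CV}), not any gluing result, which the paper only identifies as relevant for the unresolved case of large positive constants.
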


\begin{theoremL}\label{prescribing thm2}
Let $M^n$ be a compact manifold with non empty boundary $\Sigma$, dimension $n\geq 3$, and $\sigma_0(M)\leq 0.$ 
\begin{enumerate}[(a)]
\item If $\sigma_0(M)> Y_0(M,C)$ for any conformal class $C$, then   $f\in C^\infty(\Sigma)$  is the mean curvature of some scalar flat metric $g$ on $M$  and $\mbox{Area}(\Sigma,g)=1$  if and  only if  
\begin{equation}\label{eq017}
    \min f<\frac{2}{n-2}\sigma_0(M).
\end{equation}
 
 \item If $\sigma_0(M)= Y_0(M,C)$ for some conformal class $C$, then   $f\in C^\infty(\Sigma)$  is the mean curvature of some metric $g$ on $M$ with $\mbox{Area}(\Sigma,g)=1$  if and  only if either 
    $f$ is constant equal to $\frac{2}{n-2}\sigma_0(M)$ or \eqref{eq017} holds.
\end{enumerate}
Moreover, a metric  $g$ with $R_g=0,$ $H_g=\frac{2}{n-2}\sigma_0(M)$ and $\mbox{Area}(\Sigma,g)=1$ is Ricci flat and has umbilical boundary. 
\end{theoremL}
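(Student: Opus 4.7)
The plan is to follow the strategy of Theorem \ref{prescribing thm}, transposed to the boundary Yamabe framework, splitting the argument into necessity, sufficiency, and the rigidity of the ``moreover'' statement. For \emph{necessity}, I would let $g$ be scalar flat with $H_g=f$ and $\mbox{Area}(\Sigma,g)=1$, and set $Y:=Y_0(M,[g])\leq\sigma_0(M)\leq 0$. Escobar's existence theorem for the boundary Yamabe problem in the non-positive case produces a Yamabe minimizer $g_*=u^{4/(n-2)}g$ in $[g]$, normalized so that $\mbox{Area}(\Sigma,g_*)=1$, with $R_{g_*}=0$ and constant $H_{g_*}=\bar f:=\tfrac{2}{n-2}Y$. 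The conformal factor $u>0$ is $g$-harmonic and satisfies $\tfrac{2}{n-2}\partial_\nu u+fu=\bar f\,u^{n/(n-2)}$ on $\Sigma$. The normalization $\int_\Sigma u^{2(n-1)/(n-2)}\,da_g=1=\mbox{Area}(\Sigma,g)$ forces $\max u\geq 1$, and the Hopf boundary point lemma at a maximum $q\in\Sigma$ gives $\partial_\nu u(q)\geq 0$. Plugging this into the boundary equation yields $f(q)\leq\bar f\,u(q)^{2/(n-2)}\leq\bar f\leq\tfrac{2}{n-2}\sigma_0(M)$, hence $\min f\leq\tfrac{2}{n-2}\sigma_0(M)$; equality forces $Y=\sigma_0$ and $u\equiv 1$, so $g=g_*$ and $f\equiv\bar f$, which handles (b) and rules out equality in (a).

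For \emph{sufficiency}, assume $\min f<\tfrac{2}{n-2}\sigma_0(M)$. By the definition of $\sigma_0(M)$, one can choose a conformal class $C$ with $\sigma_0(M)\geq Y_0(M,C)>\tfrac{n-2}{2}\min f$. Let $g_0\in C$ be the Yamabe minimizer of unit area, so that $R_{g_0}=0$ and $H_{g_0}=c_0:=\tfrac{2}{n-2}Y_0(M,C)$ is a constant strictly greater than $\min f$. The task then reduces to finding $v>0$ solving
\begin{equation*}
\Delta_{g_0}v=0\text{ in }M,\qquad \tfrac{2}{n-2}\partial_\nu v+c_0 v=f\,v^{n/(n-2)}\text{ on }\Sigma,
\end{equation*}
for then $g=v^{4/(n-2)}g_0$ is scalar flat with $H_g=f$, and a constant rescale normalizes the area. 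The strict inequality $\min f<c_0$ provides a positive sub-solution, while a suitably scaled constant serves as a super-solution; a Kazdan-Warner-Escobar sub-/super-solution scheme combined with the direct variational method on the Escobar functional produces the desired $v$. In the equality case of (b), the Yamabe minimizer realizing $\sigma_0$ itself is $g$.

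For \emph{rigidity}, if $R_g=0$, $H_g=\tfrac{2}{n-2}\sigma_0(M)$ and $\mbox{Area}(\Sigma,g)=1$, a direct computation gives $F(g)/N_0(g)=\sigma_0(M)$, so $g$ simultaneously minimizes $F/N_0$ within $[g]$ and attains the supremum $\sigma_0$ across conformal classes. Consequently $g$ is a critical point of $F$ subject only to the unit-area constraint, with no conformal restriction, and the variational characterization recorded in Propositions \ref{Einstein} and \ref{Umbilical} then identifies $g$ as an Einstein metric with umbilical boundary; since $R_g=0$, the metric is Ricci flat. I expect the main obstacle to lie in the sufficiency step, specifically in solving the conformal equation with non-constant target $f$ at the critical boundary Sobolev exponent $2(n-1)/(n-2)$; the strict inequality $\min f<c_0$ is the geometric input that rescues the variational problem, in complete analogy with the role of $\min f<\tfrac{4(n-1)}{n-2}\sigma_1(M)$ in Theorem \ref{prescribing thm}.
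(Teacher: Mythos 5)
Your necessity argument is a plausible re-derivation of Lemma \ref{supinf} (which the paper simply cites) via the conformal boundary equation and the Hopf lemma, and your rigidity argument tracks Propositions \ref{Einstein} and \ref{Umbilical} correctly. The genuine problem is in the sufficiency direction, and it is serious.

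You fix a single conformal class $C$ with $\sigma_0(M)\geq Y_0(M,C)>\frac{n-2}{2}\min f$, take the Yamabe minimizer $g_0$ with $H_{g_0}=c_0\leq 0$, and try to solve the conformal equation $\Delta_{g_0}v=0$, $\tfrac{2}{n-2}\partial_\nu v+c_0 v=f\,v^{n/(n-2)}$ by sub-/super-solutions. This breaks down: for a constant $\overline v=K>0$ to be a super-solution one would need $c_0\geq K^{2/(n-2)}\max f$, which is impossible whenever $\max f>0$ and $c_0<0$. More fundamentally, even granting some solvability of this PDE, the conformal prescription problem at the critical exponent $\tfrac{2(n-1)}{n-2}$ carries Kazdan--Warner--Escobar type obstructions that are not captured by the single condition $\min f<c_0$, and you would still face the additional constraint $\mbox{Area}(\Sigma,v^{4/(n-2)}g_0)=1$ with no free parameter left after fixing $v$. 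Restricting to one conformal class is exactly what the paper's proof avoids: it picks a constant $c$ with the two-sided bound $\min f<c<\min\bigl\{\tfrac{2}{n-2}\sigma_0(M),\max f\bigr\}$, obtains via Theorem \ref{fact 1} and Lemma \ref{fact 3} a scalar flat reference metric with $H_{g_0}=c$ and unit area, and then uses Proposition \ref{min-max} (the Approximation Lemma by diffeomorphisms plus the implicit function theorem, i.e.\ Theorem \ref{implicit}) to move \emph{in the full space of metrics}, not conformally, to hit $H_g=f$ while holding $R_g=0$ and $\mbox{Area}(\Sigma,g)=1$. The requirement $c<\max f$ is essential for Proposition \ref{min-max} to apply and is silently dropped in your plan, which only secures $\min f<c_0$. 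You also do not treat the constant-$f$ case separately, which the paper dispatches first. You flag the critical-exponent difficulty as the ``main obstacle,'' but the resolution is not a sharper sub-/super-solution argument in $C$; it is the change of framework to the non-conformal implicit-function-theorem machinery already set up in Section \ref{Presc}.
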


For manifolds with  positive Yamabe invariant, we have the following.

\begin{theoremL}\label{positive}
Let $M^n$ be a compact manifold with non empty boundary, dimension $n\geq 3$ and $\sigma_1(M)>0$. Then, any function $f\in C^{\infty}(M)$ which is either nonconstant or constant less than $\sigma_1(M)$ is the scalar curvature of some metric with minimal boundary and unit volume.
\end{theoremL}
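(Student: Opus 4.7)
The plan is to follow Kobayashi's strategy for closed manifolds, adapted to the boundary setting through Escobar's resolution of the Yamabe problem with minimal boundary. Since $\sigma_1(M)>0$, the definition \eqref{eq015} provides a conformal class $C_0$ with $Y_1(M,C_0)>0$, and the boundary Yamabe theorem then produces a Yamabe representative $g_0\in C_0$ with $R_{g_0}$ a positive constant and $H_{g_0}=0$; this will be the background metric on which all deformations are built.

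For a nonconstant $f\in C^\infty(M)$, I would seek a conformally related metric $g=u^{4/(n-2)}g_0$ with $u>0$ solving
\begin{equation*}
-\tfrac{4(n-1)}{n-2}\Delta_{g_0}u+R_{g_0}u=\lambda\, f\, u^{(n+2)/(n-2)}\text{ in }M,\qquad \tfrac{\partial u}{\partial\nu}=0\text{ on }\Sigma,
\end{equation*}
for a positive parameter $\lambda$. Because $R_{g_0}>0$ and $f$ is nonconstant, a Kazdan-Warner-type argument (via sub/super-solutions after a preliminary conformal change making $f$ change sign, combined with a variational step) adapted to the Neumann boundary condition produces a positive solution $u_\lambda$. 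The resulting metric $g_\lambda$ has $H_{g_\lambda}=0$ and $R_{g_\lambda}=\lambda f$; combining with the overall homothetic scaling $g\mapsto\mu^2 g$, which sends $R$ to $\mu^{-2}R$ and $\mbox{Vol}$ to $\mu^n\mbox{Vol}$, enforcing $R_g=f$ and $\mbox{Vol}(g)=1$ simultaneously reduces to a one-parameter fixed-point problem in $\lambda$, which is solvable by continuity and by analysing the limiting behaviour of the volume as $\lambda$ tends to $0$ and to $\infty$.

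For a constant $f=c<\sigma_1(M)$, the argument splits in two cases. If $c\leq 0$, one picks a conformal class $C$ on $M$ with $Y_1(M,C)$ having the appropriate sign (such classes exist by standard volume-concentration constructions), applies the boundary Yamabe theorem to obtain a metric with constant scalar curvature and minimal boundary, and rescales to unit volume. If $0<c<\sigma_1(M)$, the definition of $\sigma_1$ together with the continuity of the Yamabe invariant along a path of conformal classes connecting a class of negative Yamabe invariant to one close to $C_0$ yields a class $C$ for which $Y_1(M,C)$ is precisely the value needed to make the Yamabe minimizer have scalar curvature $c$ at unit volume.

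The main obstacle I expect is in the nonconstant case: decoupling the prescription of $R_g=f$ from the normalization $\mbox{Vol}(g)=1$, since the conformal PDE only delivers a metric with scalar curvature proportional to $f$ and the proportionality constant is coupled to the volume through the critical nonlinearity. Resolving this cleanly requires either the fixed-point argument sketched above, or alternatively a boundary analogue of the Corvino-Eichmair-Miao gluing theorem that perturbs scalar curvature and volume independently near a non V-static-type domain, whose existence follows from the characterization of \eqref{firststatic} and the fact that these equations fail generically.
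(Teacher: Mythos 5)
Your treatment of the constant case ($c<\sigma_1(M)$) matches the paper: it is Lemma~\ref{fact 3} (continuity of the Yamabe constant across conformal classes) together with Theorem~\ref{fact 1} (the boundary Yamabe theorem), followed by rescaling. No issue there.

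The nonconstant case is where your approach diverges, and there is a genuine gap. You propose to solve the prescribed-scalar-curvature PDE
$-\tfrac{4(n-1)}{n-2}\Delta_{g_0}u+R_{g_0}u=\lambda\,f\,u^{(n+2)/(n-2)}$, $\partial u/\partial\nu=0$, inside a fixed conformal class $C_0$ with $Y_1(M,C_0)>0$, and then tune $\lambda$ together with a homothety. But with $R_{g_0}>0$ this is precisely the positive-Yamabe prescribed scalar curvature problem in a \emph{fixed} conformal class, which is known to be obstructed: even on the hemisphere (and on closed manifolds of positive Yamabe type) there are smooth $f$ that cannot be realized conformally, by Kazdan--Warner/Pohozaev-type identities. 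Your remark about ``a preliminary conformal change making $f$ change sign'' does not fix this, since a conformal change of the metric leaves the prescribed function $f$ unchanged; and the sub/super-solution machinery does not apply when $f>0$ everywhere (a perfectly admissible nonconstant $f$) and the Yamabe invariant is positive. So the existence of $u_\lambda$ is not secured, and without it the one-parameter fixed-point argument never gets started.

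The paper's proof does something quite different and deliberately avoids staying inside a conformal class. It first establishes, via a Kobayashi-style connected-sum gluing (Lemma~\ref{modifikob} plus Lemma~\ref{lemma3.1} and Proposition~\ref{prop001}), that for every $r>0$ and $\varepsilon>0$ there is a metric with minimal boundary, unit volume, and $|R_g-r|<\varepsilon$; for $r\leq\sigma_1(M)$ exact constant curvature is obtained directly from Theorem~\ref{fact 1} and Lemma~\ref{fact 3}. With such a metric $g_0$ satisfying $\min f<R_{g_0}<\max f$ in hand, Proposition~\ref{min-max}(b) --- built on the Approximation Lemma (moving $f$ close to $R_{g_0}$ by a diffeomorphism) and Theorem~\ref{implicit} (implicit function theorem giving local surjectivity of $g\mapsto(R_g,2H_g)$ among unit-volume metrics with $\mathcal{S}^*_{g_0}$ injective) --- produces a metric with $R_g=f$, $H_g=0$, $\mathrm{Vol}(M,g)=1$. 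This is a non-conformal deformation, and that is exactly what makes it immune to the Kazdan--Warner obstruction that blocks your conformal route. Your fallback suggestion of a Corvino--Eichmair--Miao-type localized gluing is closer in spirit but is also not what the paper uses for this theorem; CEM-type gluing is what Matsuo needed for the remaining closed-manifold cases (large positive constants), which the paper explicitly leaves open in the boundary setting.
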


  It would be interesting to know if our result can be extended to the remaining cases, namely, for any positive constant
functions and for the case that $\sigma_0(M)>0$, providing a full analogy between the result of \cite{K} and this  boundary setting.

 Now if the conformal class is fixed, we obtain the following theorem that can be compared to Theorem 4 of \cite{K}.
 
\begin{theoremL}\label{finalthm}
Let $M$ be a compact manifold with boundary and dimension $n\geq 3$. Suppose that $Y_1(M,C)>0$  for some conformal class $C$. Then, given $\varepsilon>0$ and an integer  $k\geq 0$, there is a metric $g\in C$ with minimal boundary,
$\mbox{Vol}(M, g)= 1$ and satisfying 
$$|R_g-(Y_1(M,C)^{n/2}+ k(2\sigma_1(\mathbb S_+^n))^{n/2})^{2/n}|<\varepsilon.$$
\end{theoremL}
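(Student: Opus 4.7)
The plan is to adapt Kobayashi's connected-sum / conformal-bubbling argument \cite{K}, used in the closed case, to the present boundary setting. The case $k=0$ is essentially the existence of a Yamabe minimizer: because $Y_1(M,C)>0$, the boundary Yamabe problem (Escobar) produces $g_0\in C$ with $H_{g_0}\equiv 0$, $\mbox{Vol}(M,g_0)=1$, and constant positive scalar curvature. For $k\ge 1$ the idea is to add $k$ concentrated sphere-like bubbles to $g_0$, staying inside the conformal class $C$ and inside the class of metrics with minimal boundary.

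Concretely, I would fix $k$ distinct interior points $p_1,\ldots,p_k\in M\setminus\Sigma$ and small pairwise disjoint balls $B(p_i,\rho)\subset M\setminus\Sigma$, and paste a standard spherical bubble at each $p_i$. The bubble profile $u_{\varepsilon_i}(x)=(\varepsilon_i/(\varepsilon_i^2+|x|^2))^{(n-2)/2}$, written in $g_0$-normal coordinates at $p_i$, realises the round metric on $\mathbb{S}^n$ as $u_{\varepsilon_i}^{4/(n-2)}g_{\mathrm{Euc}}$. Build a positive $v\in C^\infty(M)$ with $v\equiv 1$ outside $\bigcup B(p_i,\rho)$ and matching $u_{\varepsilon_i}$ in the inner portion of each ball through a smooth cutoff; then set $g_1:=v^{4/(n-2)}g_0\in C$. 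Since $v\equiv 1$ near $\Sigma$, we still have $H_{g_1}=0$. Writing $R_{g_1}$ via the conformal change formula and using that $u_{\varepsilon_i}$ solves the model Yamabe equation on $\mathbb R^n$, the scale-invariant quantity $\int R_{g_1}^{n/2}\,dv_{g_1}$ equals $\int R_{g_0}^{n/2}\,dv_{g_0}$ plus $k$ times a fixed ``bubble energy'', with error $O(\varepsilon^{n-2})$ concentrated in the cutoff annuli. After rescaling to unit volume, this additivity yields a scalar curvature close, in an integrated sense, to the target value $(Y_1(M,C)^{n/2}+k(2\sigma_1(\mathbb{S}^n_+))^{n/2})^{2/n}$; matching this constant against the statement is a bookkeeping exercise that uses the value $R_{g_0}=\frac{4(n-1)}{n-2}Y_1(M,C)$ together with the analogous formula on the unit-volume hemisphere.

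The remaining work, and the main obstacle, is to upgrade this integrated control to the pointwise estimate $|R_g-(\cdots)^{2/n}|<\varepsilon$ demanded by the statement. The standard strategy is to seek a corrective conformal factor $w$ close to $1$ such that $g:=w^{4/(n-2)}g_1$ has constant scalar curvature $c$ and still satisfies $H_g=0$; this amounts to solving
\begin{equation*}
-\tfrac{4(n-1)}{n-2}\Delta_{g_1}w+R_{g_1}w=c\,w^{(n+2)/(n-2)}\;\text{in }M,\qquad \partial_\nu w=0\;\text{on }\Sigma.
\end{equation*}
One sets this up as a fixed-point problem around $w\equiv 1$ and applies the implicit function theorem in weighted Sobolev spaces adapted to the bubble scales (in the spirit of Mazzeo--Pacard or Byde gluings). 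Invertibility of the linearised operator follows from positivity of $Y_1(M,C)$ together with the Neumann-type boundary condition, while control inside each bubble region uses the explicit kernel of the linearised Yamabe operator on $\mathbb R^n$. Once $w$ is produced and the final metric rescaled to have unit volume, the inequality in the statement follows, with $\varepsilon$ governed by the concentration parameters $\varepsilon_i$ and the cutoff radius $\rho$.
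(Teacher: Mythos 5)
Your overall strategy — conformally attaching $k$ spherical regions to a Yamabe metric in $C$ — is the same broad idea the paper uses (and that Kobayashi used in the closed case), but the execution you propose diverges from the paper's in a way that creates real gaps.

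The paper does \emph{not} solve a Yamabe-type PDE after gluing, and does \emph{not} use the Aubin--Talenti profile with a cutoff. Instead it proceeds by induction on $k$, and at each step it (i) uses Kobayashi's Corollary 3.6 (Proposition \ref{prop001}) to perturb the metric in $C$ so that, at an interior point $p_0$, one has $\mathrm{Ric}_{\tilde g}(p_0)=n(n-1)\tilde g$ in addition to $R_{\tilde g}(p_0)=n(n-1)$, and then (ii) multiplies $\tilde g$ by $f^{-2}$, where $f=f(r)$ is the explicit radial function from Kobayashi's Lemma 3.1 (Lemma \ref{lemma3.1}), which by design satisfies $|R_{f^{-2}g_n}-n(n-1)|<\varepsilon_1$ \emph{pointwise} on all of $\mathbb S^n$ and $f\equiv 1$ for $r>\varepsilon_2$. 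The conformal change identity
\[
R_{\overline g}=2(n-1)f\bigl(\Delta_{\tilde g}f-\Delta_{g_n}f\bigr)+\bigl(R_{\tilde g}-n(n-1)\bigr)f^{2}+R_{g'}
\]
then gives the desired pointwise bound directly: $R_{g'}$ is close to $n(n-1)$ by Lemma 3.1, the middle term is small because $R_{\tilde g}$ is close to $n(n-1)$ and $0<f\le 1$, and the Laplacian difference is controlled precisely because the Ricci curvature was normalized at $p_0$. No implicit function theorem is invoked, and no exact constant scalar curvature metric is produced.

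Two concrete gaps in your proposal. First, the cut-off Aubin--Talenti profile gives good control of $R$ deep inside the bubble and far from it, but you never establish the required \emph{pointwise} bound in the transition annulus, and $\int R^{n/2}\,dv$ (which is not even obviously well defined if $R$ changes sign there) says nothing pointwise; the paper sidesteps this entirely because Kobayashi's $f$ is built to satisfy the pointwise bound everywhere. Second, your repair step — solving $-\tfrac{4(n-1)}{n-2}\Delta_{g_1}w+R_{g_1}w=c\,w^{(n+2)/(n-2)}$ with Neumann condition by a fixed point argument around $w\equiv 1$ — runs into the well-known obstruction that the linearized operator on each attached near-round sphere has an approximate kernel (the degree-one spherical harmonics corresponding to conformal Killing fields, together with the dilation direction); it is not invertible uniformly in the bubble parameters, and handling it requires a Lyapunov--Schmidt reduction and a balancing of the gluing parameters, none of which you address. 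Since the statement only asks for $|R_g - c|<\varepsilon$ rather than exact equality, this extra machinery is both unnecessary and, as stated, incomplete. Finally, your proposal omits the preliminary Ricci normalization at the gluing point, which the paper needs to bound $\Delta_{\tilde g}f-\Delta_{g_n}f$ and thus the pointwise error in the neck region.
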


We observe that a metric which is obtained as a critical point of the functional \eqref{functional} satisfies the condition $(\mbox{min} R_g)\mbox{Vol}(M, g)^{2/n}\leq\sigma_1(\mathbb S^n_+)$. Thus, for $k>0$ and $\varepsilon>0$ sufficiently small, the metric given by Theorem \ref{finalthm} cannot  be obtained using variational method. We can ask if a similar result is true for manifold with $Y_0(M,C)>0$, i.e., if in this case there exists a scalar flat metric $g\in C$ with mean curvature of the boundary arbitrarily large.

\subsection*{Organization} The content of this paper is organized as follows. In Section \ref{preli},  we obtain equation \eqref{firststatic} through a variational characterization. Also we give some properties and examples of such metrics.
In Section \ref{constk}, we study the manifold of Riemannian metrics with constant curvature. In Section \ref{vari}, we construct explicit deformations on  the space of metrics defined in the previous section in order to prove Theorem \ref{teo1}. Finally, in Section \ref{prescsection}, we combine well known facts of the Yamabe invariant  with the purpose of solve the  modified Kazdan-Warner-Kobayashi problem for manifolds with boundary in several cases. 
Some technical details concerning a continuity property of the Yamabe invariant will be proved in one Appendix (Section \ref{appe}).

\noindent
{\bf Acknowledgements}.The authors would like to thank S. Almaraz for many useful comments on an earlier version of this paper and   E. Ribeiro Jr for several enlightening discussion about V-static metrics.

\section{Properties and computations}\label{preli}

In this section we  obtain equation \eqref{firststatic}, giving an analogous concept of V-static equation given in \cite{MT}, beside interpreting variationally and presenting examples of such  manifolds.

Now we fix  the basic notation that will be used throughout the paper.   
Let $(M,g)$ be a compact connected  Riemannian manifold with non empty boundary $\Sigma$. Given an infinitesimal variation $h,$ a well known calculation (cf. \cite{A}) shows that the variation of the scalar curvature $R_g$  and of the mean curvature $H_g$ in the direction of $ h$ is given by
 $$
\delta R_g h=\frac{d}{dt}\Big|_{t=0} R(g(t))=-\Delta_g(\tr_gh)+\mbox{div}_g \mbox{div}_g h - \langle h, \Ric_g\rangle
 $$
 and 
 $$\delta H_g h=\frac{d}{dt}\Big|_{t=0} H(g(t))=\frac{1}{2}\left( [d ( \tr_ g h ) - \mbox{div}_g h ] (\nu) - \Div_{g|_{T\partial M}} X - \langle \Pi_{g}, h \rangle\right),
 $$ where 
 $ \nu $ 
 is the  outward unit vector normal to the boundary $\Sigma$,
$X$ is the vector field dual to the one-form  $\omega(\cdot)=h(\cdot,\nu), $ $\tr_gh=g^{ij}h_{ij}$ is the trace of $h$  and  our convention for the laplacian is $\Delta_g f = \text{tr}_g(\mbox{Hess}_g f)$.

Consider the operator 
$$
\Psi(g)=(R_g, 2 H_{g}, -2\kappa\mbox{Vol}(g),-2\tau\mbox{Area}(g)),
$$
where $\kappa,\tau \in\mathbb{R}$. Its linearization  will be denoted by $\mathcal{S}_g(h).$  
A straightforward  calculation using the Green Formula (see  \cite[Section 2]{CV}) shows that 
\begin{equation}\label{green}
\langle\delta R_gh,V\rangle_{L^2(M)}-\langle A^*V,h\rangle_{L^2(M)}=
\langle B^*V,h\rangle_{L^2(\Sigma)}-\langle 2\delta H_g h,V\rangle_{L^2(\Sigma)},
\end{equation}
 where $A_g^*V  =  -(\Delta_g V)g+\mbox{Hess}_gV-V\Ric_g$
in $M$ and $B_g^*V  = \displaystyle \frac{\partial  V}{\partial  \nu}g-V\Pi_{g}$ on $\Sigma$. Since the  linearization of the area and of the volume functional are given by $\delta\mbox{Area}_g\cdot h=\displaystyle\frac{1}{2}\int_{\Sigma}\mbox{tr}_g (h|_{\Sigma}) da$  and    $\delta\mbox{Vol}_g\cdot h=\displaystyle\frac{1}{2}\int_{ M}\mbox{tr}_gh dv$, respectively, then the formal $L^2$-adjoint of $\mathcal{S}_g$, denoted by  $\mathcal{S}_g^*,$ is given by
$$
\mathcal{S}_g^*(V,\kappa,\tau)=(A_g^*V-\kappa g, B^*_g V- \tau g).
$$

Thus a triple $(V,\kappa,\tau)$ in the kernel of $\mathcal{S}_g^*$ satisfies the PDE \eqref{firststatic}  which in fact is a second order overdetermined elliptic equation  with  oblique boundary value condition. Taking the trace of \eqref{firststatic} we have that
 \begin{equation}\label{trace}
\left\{
  \begin{array}{rcll}
\displaystyle \Delta V+\frac{R_g}{n-1}V  & = &  \displaystyle -\frac{\kappa n}{n-1} &\mbox{ in } M\\
\vspace{-0,2cm}\\
\displaystyle\frac{\partial V}{\partial \nu}-\frac{H_{g}}{n-1}V & = & \tau & \mbox{ on } \Sigma.
\end{array}
  \right.
 \end{equation}

\begin{remark}
 A triple $(M,g,V)$ satisfying \eqref{firststatic} is also called in the literature of \textit{singular space}, provided $\mbox{Ker}~\mathcal{S}_g^*\neq \{0\}$. If we assume  constraints on the volume and on the area, we can  show that non-singular spaces are actually linearized stable, in the sense that $g\mapsto (R_g,2H_g)$ is a submersion. In others words we can locally prescribe the curvature of such  manifolds  with constant volume or constant area of the boundary. This will follow from  a slight modification of the proof of Proposition 3.1 and  3.3 of \cite{CV}, see also Section \ref{prescsection} for a more general approach. 
 
 Recently P. T. Ho and Y.-C. Huang \cite{HH} have studied the problem of prescribing the scalar curvature in a compact manifold $M$ and the mean curvature on the boundary $\partial M$ simultaneously, provided the manifold is not a singular space.
\end{remark}

With the appropriate analogous of V-static manifolds with boundary at hand, we follows closely the ideas of Theorem 3.2 of \cite{MT} in order to prove some properties for metrics satisfying \eqref{firststatic}.

\begin{proposition}\label{properties} Let $(M,g)$ be a
connected Riemannian compact manifold with non empty boundary $\Sigma$. Suppose there is a  non-identically zero function $V$ in the interior and on $\Sigma$  satisfying \eqref{firststatic} for some $\kappa,\tau\in \mathbb R.$ Then the following assumptions are true.
\begin{enumerate}[(a)]
\item The scalar curvature $R_g$  is constant.
\item\label{item001} The mean curvature $H_g$ is constant and $\Pi_g =  \dfrac{H_g}{n-1}g$.
\item\label{item013}  At each point of $\Sigma$ it holds
          \begin{equation*}\label{Gauss}
           R_{\Sigma}-\frac{n-2}{n-1}H^2_g=R_g-2\Ric_g(\nu,\nu).
         \end{equation*}
         
\item\label{item014} Consider the following functional  on the space of Riemannian metrics
\begin{align*}
\mathcal{F}(g)=& \int_{ M} R_g V\;dv+ 2\int_{\Sigma} H_g V\; da-2\kappa  \mbox{Vol}(g)-2\tau\mbox{Area}(\Sigma,g),
\end{align*}
where $V$ is a given smooth nontrivial function  on $M.$ Then $g$ is a critical point of $
\mathcal{F}.$         
\end{enumerate}
\end{proposition}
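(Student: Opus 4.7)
\bigskip

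\noindent\textbf{Proof plan for Proposition \ref{properties}.}

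The plan is to handle the four items in order, using the trace identity \eqref{trace}, the Green--type formula \eqref{green}, and the classical Gauss--Codazzi machinery. The main technical step will be part (b): showing that $H_g$ is constant (umbilicality comes for free from the tensor equation).

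For part (a), I would take the $L^2$--divergence of the interior equation in \eqref{firststatic}. Using the Ricci commutator $\nabla^i\nabla_j\nabla_i V = \nabla_j\Delta_g V + \Ric_g(\nabla V,\cdot)^j$ together with the contracted second Bianchi identity $\Div_g\Ric_g = \tfrac{1}{2}dR_g$, the terms $d(\Delta_g V)$ and $\Ric_g(\nabla V,\cdot)$ cancel pairwise and one is left with $V\,dR_g = 0$ on $M$. Since $V$ solves the elliptic equation in \eqref{trace} and is not identically zero, Aronszajn's unique continuation theorem implies that the zero set of $V$ has empty interior; hence $dR_g = 0$ on a dense open set and, by continuity, $R_g$ is constant.

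For part (b), the boundary equation in \eqref{firststatic} can be rewritten as $V\,\Pi_g = \bigl(\tfrac{\partial V}{\partial\nu}-\tau\bigr)g$ on $\Sigma$. Tracing and comparing with the second equation in \eqref{trace} gives $\tfrac{\partial V}{\partial\nu}-\tau = \tfrac{V H_g}{n-1}$, so back-substitution yields $V\bigl(\Pi_g - \tfrac{H_g}{n-1}g\bigr) = 0$ on $\Sigma$. Because $V|_\Sigma\not\equiv 0$ (hypothesis) and the left-hand side is continuous, umbilicality $\Pi_g = \tfrac{H_g}{n-1}g$ holds on all of $\Sigma$. To conclude that $H_g$ is constant, I would pair the interior equation with $(X,\nu)$ for $X\in T\Sigma$ at a boundary point; since $g(X,\nu)=0$, this collapses to $V\,\Ric_g(X,\nu) = \Hess_g V(X,\nu)$. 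Expanding $\Hess_g V(X,\nu) = X(\nu V) - (\nabla_X\nu)V$, using the trace boundary identity $\nu V = \tfrac{V H_g}{n-1}+\tau$ to differentiate tangentially, and using umbilicality to replace $\nabla_X\nu$ by a multiple of $X$, the $H_g(XV)$ contributions cancel and one finds $\Hess_g V(X,\nu) = \tfrac{V}{n-1}X(H_g)$. On the other hand, contracting the Codazzi equation for an umbilical hypersurface yields $\Ric_g(X,\nu) = -\tfrac{n-2}{n-1}X(H_g)$. Matching the two expressions on $\{V\neq 0\}$ gives $(n-1)V\,X(H_g)=0$, so $X(H_g)=0$ on a dense subset of $\Sigma$, hence $H_g$ is constant. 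This step is the main obstacle because it is the only place where Codazzi and the interior PDE must be combined, and one has to be careful with the sign convention for $\Pi_g$.

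For part (c), I would just invoke the Gauss equation $R_\Sigma = R_g - 2\Ric_g(\nu,\nu) + H_g^2 - |\Pi_g|^2$; umbilicality from (b) gives $|\Pi_g|^2 = \tfrac{H_g^2}{n-1}$, and rearranging is exactly the claimed identity. For part (d), I would compute the first variation of $\mathcal{F}$ along an arbitrary $h=\frac{d}{dt}\big|_{t=0}g(t)$, insert the known formulas for $\delta R_g\cdot h$ and $\delta H_g\cdot h$ recalled in the excerpt, and apply the Green formula \eqref{green} to rewrite $\int_M V\,\delta R_g\cdot h\,dv$ in terms of $A_g^*V$ and $B_g^*V$. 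The $2\int_\Sigma V\,\delta H_g\cdot h\,da$ terms cancel, and using $\delta\mbox{Vol}_g\cdot h = \tfrac{1}{2}\int_M\tr_g h\,dv$ and $\delta\mbox{Area}_g\cdot h = \tfrac{1}{2}\int_\Sigma \tr_g(h|_\Sigma)\,da$, one obtains
\[
\tfrac{d}{dt}\big|_{t=0}\mathcal{F}(g(t)) = \int_M \bigl(A_g^*V-\kappa g\bigr)\!:\!h\,dv + \int_\Sigma \bigl(B_g^*V-\tau g\bigr)\!:\!h\,da,
\]
which vanishes for every $h$ by \eqref{firststatic}, so $g$ is critical.
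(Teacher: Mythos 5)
Your proposal is correct and follows essentially the same route as the paper: divergence of the interior equation plus unique continuation for (a), the interior equation paired with $(X,\nu)$ together with the boundary trace identity and contracted Codazzi for (b), the Gauss equation for (c), and the Green formula \eqref{green} for (d). Two small remarks. First, in (a) you invoke Aronszajn's unique continuation theorem to conclude that $\{V=0\}$ has empty interior; this applies verbatim only when $\kappa=0$, since the trace equation is then homogeneous. For $\kappa\neq 0$ the conclusion is even easier and should be stated separately: if $V\equiv 0$ on an open set, the equation $\Delta V+\tfrac{R_g}{n-1}V=-\tfrac{\kappa n}{n-1}$ forces $\kappa=0$ there, a contradiction. (The paper instead restricts $V$ to geodesics and uses ODE uniqueness, which covers both cases uniformly.) Second, in (d) your displayed first-variation formula implicitly treats the measures $dv,da$ appearing in $\mathcal{F}$ as fixed rather than as $dv_g,da_g$; with the latter reading one would also pick up the volume-form variation terms $\tfrac{1}{2}\int_M R_gV\,\mathrm{tr}_g h\,dv + \int_\Sigma H_gV\,\mathrm{tr}_g(h|_\Sigma)\,da$, which do not cancel in general. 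The paper's one-line justification is equally terse here, and the intended reading (following Miao--Tam) is the fixed-measure one, under which your computation is correct. Your coefficient bookkeeping in (b), giving $\tfrac{1}{n-1}+\tfrac{n-2}{n-1}=1$ and hence $V\,X(H_g)=0$ cleanly for all $n\geq 3$, is a nice tidy version of the paper's Codazzi argument.
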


\begin{proof} 
 The proof that   $R_g$ is constant is standard as in A. E. Fischer and J. E. Marsden \cite{FM}. We observe that the divergence of the first equation in \eqref{firststatic} gives   $V d R_g =0.$ If $V$ is never zero then $R_g$ is constant. So, suppose $V$ is zero in some point $p\in M$. Restricting $V$ to unit geodesics $\gamma$ with $\gamma(0)=p$ we obtain a second order ODE with initial data $V(p)$ and $dV(\gamma
'(0))$. In case $\kappa=0$, if $V(p)=dV(\gamma'(0))=0$, then $V\equiv 0$ along $\gamma$. This implies that the zero set of $V$ is a submanifold of $M$ with codimension one. It follows that $dR_g=0$, so $R_g$ is constant. In case $\kappa\not=0$, a solution of the inhomogeneous ODE cannot be identically zero in a non-empty open set. Again we obtain that $R_g$ is constant.

 In order to prove \eqref{item001}, we modify slightly the arguments in Proposition 3.1 of \cite{CV}. Since  $V$ is not identically zero on $\Sigma$, it follows from \eqref{trace} that $\Pi_{g}=\frac{H_{g}}{n-1}g.$ 
Suppose that $\{e_i\}_{i=1}^{n-1}$ span $T\Sigma$ locally and consider $e_n=\nu$. Then we get by Codazzi equation that 
\begin{eqnarray}\label{codazzi}
\nabla_i^{\Sigma}H_{g}&=&\nabla_i^{\Sigma}\Pi^j_j=\nabla_j^{\Sigma}\Pi^j_i+R^j_{ji\nu}=\nabla_j^{\Sigma}\Pi^j_i+R_{i\nu}\nonumber,
\end{eqnarray}
where $R_{ijkl}$ is the curvature tensor of $(M,g).$ Thus,
 \begin{equation}\label{eq001}
     \nabla_i^{\Sigma}H_{g}=\nabla_j^{\Sigma}\Big(\frac{H_{g}}{n-1}g_i^j\Big) +R_{i\nu}=\frac{1}{n-1}\nabla_i^{\Sigma_0}H_{g} +R_{i\nu}.
 \end{equation}
On the other hand,  we have that 
\begin{eqnarray*}
0&=& \nabla_i\Big(\frac{\partial V}{\partial \nu}g^i_j-V\Pi^i_j-\tau g^i_j\Big)\\
&=&V_{i\nu} -\frac{V}{n-1}\nabla_i^{\Sigma}H_{g}.
\end{eqnarray*}
By the first equation in \eqref{firststatic}, we get $\Hess_g V(e_i,\nu)=V R_{i\nu}$. Thus, by \eqref{eq001} we have that $\nabla^{\Sigma} H_{g}=0,$ so $H_{g}$ is constant.

Finally,   item \eqref{item013} follows by the Gauss equation and  item \eqref{item014} is immediate from \eqref{green}.
\end{proof}
 \begin{remark}
 There are several motivations to consider the \textit{weighted  curvature functional} $\mathcal{F}$ given in item \eqref{item014} of Proposition \ref{properties}, which was first introduced by  A. E. Fischer and J. E. Mardsen in \cite{FM}. Indeed, this  functional  plays a fundamental role in the theory of deformation and rigidity of static and V-static manifolds as we can observe in the works by S. Brendle et al \cite{BMN}, S. Brendle and F. C. Marques \cite{BM}, G. Cox, P. Miao and L.-F. Tam \cite{CMT}  and  J. Qing and W. Yuan \cite{QY}.
 \end{remark}

The next proposition shows the importance of the 
spectrum of the Steklov and Neumann to the characterization of metrics satifying equation \eqref{firststatic}.

\begin{proposition} Let $(M,g)$ be a connected Riemannian compact manifold with boundary $\Sigma$.
\begin{enumerate}[(a)]
\item\label{item015} Assume that $g$ is a scalar flat metric with nonzero constant mean curvature on $\Sigma$.  
If $\Sigma$ is not umbilical and $\frac{H_g}{n-1}$ is not in the spectrum of the  Steklov eigenvalue problem, then there is no function $V$ satisfying equation \eqref{firststatic} in $M$ with $\kappa=0$ and $\tau=1$.
\item Assume that $g$  is a metric with nonzero constant scalar curvature  with zero mean curvature on $\Sigma$.  
If $g$ is not Einstein and does not admit $\frac{R_g}{n-1}$ in the spectrum of the  Neumann eigenvalue problem, then there is no function $V$ satisfying equation  \eqref{firststatic}  with $\kappa=1$ and $\tau=0$.
\end{enumerate}
\end{proposition}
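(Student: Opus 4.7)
My plan is to use the spectral assumptions to collapse any hypothetical nontrivial $V$ to an explicit constant function, and then extract a geometric contradiction from the remaining trace-free parts of \eqref{firststatic}.

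For part (a), I would assume for contradiction that some $V\in C^\infty(M)$ satisfies \eqref{firststatic} with $\kappa=0$ and $\tau=1$. Taking the trace as in \eqref{trace} with $R_g=0$ and $\kappa=0$ reduces \eqref{firststatic} to the scalar Steklov-type boundary value problem
\[
\Delta_g V=0 \text{ in } M,\qquad \frac{\partial V}{\partial \nu}-\frac{H_g}{n-1}V=1 \text{ on }\Sigma.
\]
The key observation is that, because $H_g$ is a nonzero constant, the constant function $V_0\equiv -\frac{n-1}{H_g}$ is a particular solution of this boundary value problem: indeed $\Delta_g V_0=0$ trivially, and $\left(\partial_\nu - \frac{H_g}{n-1}\right)V_0=1$. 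The assumption that $\frac{H_g}{n-1}$ is not a Steklov eigenvalue is precisely the Fredholm condition making the associated homogeneous problem have only the trivial solution, hence guaranteeing uniqueness. Therefore any solution $V$ must equal $V_0$. Plugging this constant back into the full boundary equation $\partial_\nu V\,g-V\Pi_g=g$ gives $-V_0\Pi_g=g$, i.e.\ $\Pi_g=\frac{H_g}{n-1}g$, which says $\Sigma$ is umbilical, contradicting the hypothesis.

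For part (b), the argument runs in parallel. Taking the trace of \eqref{firststatic} with $\kappa=1$, $\tau=0$, and $H_g=0$ produces the Neumann-type problem
\[
\Delta_g V+\frac{R_g}{n-1}V=-\frac{n}{n-1} \text{ in } M,\qquad \frac{\partial V}{\partial \nu}=0 \text{ on }\Sigma.
\]
Since $R_g$ is a nonzero constant, the constant $V_0\equiv -\frac{n}{R_g}$ is an explicit solution, and the hypothesis that $\frac{R_g}{n-1}$ lies outside the Neumann spectrum again supplies the Fredholm condition forcing uniqueness, so $V\equiv V_0$. Substituting this constant into the interior tensor equation $-(\Delta_g V)g+\Hess_g V-V\Ric_g=g$ leaves only $-V_0\Ric_g=g$, so $\Ric_g=\frac{R_g}{n}g$ and $g$ is Einstein, contradicting the non-Einstein assumption.

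No serious obstacle is anticipated. The whole scheme rests on the simple remark that the hypotheses $H_g\neq 0$ (respectively $R_g\neq 0$) together with its constancy provide an explicit constant particular solution of the traced equation; from there the spectral hypothesis does the uniqueness work, and the trace-free remainder of the original overdetermined system \eqref{firststatic} converts the conclusion $V\equiv\mathrm{const}\neq 0$ into the umbilical (respectively Einstein) obstruction.
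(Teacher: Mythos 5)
Your proposal is correct and matches the paper's own argument almost line by line: the paper likewise recognizes that the trace equation forces $V+\tfrac{n-1}{H_g}$ (respectively $V+\tfrac{n}{R_g}$) to solve the homogeneous Steklov (respectively Neumann) problem and hence to vanish by the spectral hypothesis, which is the same as your observation that the explicit constant is the unique solution. Substituting the constant back into the full tensorial equation to get the umbilical/Einstein contradiction is also identical.
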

\begin{proof}
Let $V\in C^{\infty}(M)$ be a solution of \eqref{firststatic}  with $\kappa=0$ and $\tau=1$. By Proposition \ref{properties}, the scalar curvature and the mean curvature are constants. Using the trace \eqref{trace} we obtain that 
 \begin{equation*}
\left\{
  \begin{array}{ll}
\displaystyle\Delta_g \left(V+\frac{n-1}{H_g}\right)= 0 &\mbox{ in } M\\
\displaystyle\frac{\partial }{\partial \nu}\left(V+\frac{n-1}{H_g}\right)=\frac{H_{g}}{n-1}\left(V+\frac{n-1}{H_g}\right)  & \mbox{ on } \Sigma.
\end{array}
  \right.
 \end{equation*}
Thus  $V+\frac{n-1}{H_g}\equiv 0$, which implies by the second equation in \eqref{firststatic} that $\Sigma$ is umbilical, contradicting the hypothesis. This completes part \eqref{item015}.

Now let $V\in C^{\infty}(M)$ be a solution of \eqref{firststatic}  with $k=0$ and $\tau=1$. Again, using the trace \eqref{trace} we obtain that
 \begin{equation*}
\left\{
  \begin{array}{ll}
\displaystyle\Delta_g \left(V+\frac{n}{R_g}\right)= -\frac{R_{g}}{n-1}\left(V+\frac{n}{R_g}\right)  & \mbox{ in } M\\
\displaystyle\frac{\partial }{\partial \nu}\left(V+\frac{n}{R_g}\right)= 0& \mbox{ on }\Sigma.
\end{array}
  \right.
 \end{equation*}
 Arguing as before we obtain the desired result for item (b).
\end{proof}

 \subsection{Some examples}

Next, we present some examples of functions  satisfying equation \eqref{firststatic}, such functions  will be called of \textit{potential functions}. 
\begin{example}
Let $\Omega$ be a geodesic ball in $\mathbb{S}^n$ in the Euclidean space with center at $N=(0,\ldots,0, 1)$ and geodesic radius $R\in(0,\pi)$. If $r$ is the geodesic distance to $N$, one can check that the following function 
$$V=a\cos r-\frac{\kappa}{n-1},$$
satisfies \eqref{firststatic} for
$\tau=a\frac{\cos 2R}{\sin R}-\frac{\kappa}{n-1}\frac{\cos R}{\sin R}$, where $a\in\mathbb R$.

\end{example} 

\begin{example}
Let $\mathbb B$ be the unit Euclidean ball in $\mathbb{R}^n.$ The function $V$ given by
$$
V=-\frac{\kappa}{2(n-1)}|x|^2+\langle b,x\rangle-\tau,
$$
where $\kappa,\tau\in\mathbb R$ and $b\in\mathbb R^n$, is a potential for equation \eqref{firststatic}. 

\end{example}

 \begin{example}\label{ex002}
 Let $\mathbb{L}^{n+1}=(\mathbb R^{n+1},ds^2)$ be the Minkowski space with the metric $ds^2=dx_1^2+\ldots+dx_n^2-dt^2.$ Consider  $$\mathbb{H}^{n}=\left\{\left(x_{1}, \ldots, x_{n}, t\right) \in \mathbb{R}^{n+1} ; \sum_{i=1}^{n} x_{i}^{2}-t^{2}=-1, t \geq 1\right\}$$ 
embedded in $\mathbb{L}^{n+1}$ with the induced metric $g$. Fix $p=(0, \ldots, 0,1) \in \mathbb{H}^{n},$ the geodesic ball $\Omega^{n} \subset \mathbb{H}^{n}$ with center $p$ and radius $R_{0}$. If $r$ is the geodesic distance to $p$, one can check that the following function 
$$V=a\cosh r+\frac{\kappa}{n-1},$$
satisfies \eqref{firststatic} for $\tau=a\frac{\cosh 2R}{\sinh R}+\frac{\kappa}{n-1}\frac{\cosh R}{\sinh R}$, where $a\in\mathbb R$.
 
 \end{example}

 \begin{example}
Consider the model of the hyperbolic space $\mathbb H^n$ given by \linebreak Example \ref{ex002}.
Let $\left(\mathbb{H}_{+}^{n}, b, \partial \mathbb{H}_{+}^{n}\right),$ where $\mathbb{H}_{+}^{n}=\{x \in\mathbb {H}^{n} ; x_{n} \geq 0\}$ be the hyperbolic half-space endowed with the induced metric
$
b=dr^{2}/(1+r^{2})+r^{2} h_{0},
$
where $h_{0}$ is the canonical metric on the unit hemisphere $\mathbb{S}_{+}^{n-1},$ and
$
r=\sqrt{x_{1}^{2}+\cdots+x_{n}^{2}}.
$
The space of static potentials on $\mathbb{H}^{n}$, denoted by $\mathcal{N}_{b}$, is spanned by $V_{(0)}, V_{(1)}, \cdots, V_{(n)},$ where $V_{(i)}=x_{i}\mid_{\mathbb{H}^{n}}.$ Since for each $i \neq n$
$$
\frac{\partial V_{(i)}}{\partial \nu}=0,
$$
where $\nu$ is the outward unit normal to $\partial \mathbb{H}_{+}^{n},$ we can define the  space of
static potentials of $\mathbb{H}_{+}^{n}$ as
$$
\mathcal{N}_{b}^{+}=\left\{V \in \mathcal{N}_{b} ; \frac{\partial V}{\partial \nu}=0\right\},
$$
which is spanned by $V_{(0)}, V_{(1)}, \cdots, V_{(n-1)}.$ We highlight  the importance of $\mathcal{N}_{b}^{+}$  to define the mass of an \textit{Asymptotically Hyperbolic} manifold with non-compact boundary as we can see in \cite{AL}. 
We are grateful to S. Almaraz for suggesting this motivation coming from
  general relativity theory .
 \end{example}

 \begin{example}
 Every Ricci-flat metric on $M$ with totally geodesic boundary satisfies \eqref{firststatic} for $\kappa=\tau=0$. Moreover,  we can see that the potential function is generated by 1. Since we can scale the volume and area of the boun-dary such that the metric remains Ricci-flat with totally geodesic boundary,  such  metric cannot be critical for the volume and the area of the boundary    functional.
 \end{example}
 
For more examples with $\kappa=\tau=0,$ see Section 4 of \cite{HH}.

\begin{remark}
We left to the reader to compare the difference between these examples with those in  \cite{barros2015,BDR,MT,MT2}, where the V-static case is treated. 
\end{remark}
 
\section{Manifolds of metrics of prescribed curvature }\label{constk}

Let $M^n$ be an $n$-dimensional  compact connected  Riemannian manifold with boundary $\Sigma$. 
Let $S^{k,2}_2=W^{k,2}(\mbox{Sym}^2(T^*M))$ be the section of class $W^{k,2}$ of  symmetric $(0, 2)$-tensors $h$. For $k>\frac{n}{2}+2,$ consider the operator 
$$
\Phi(\cdot):=(R(\cdot), 2H(\cdot)):\mathcal{M}^{k,2}\to W^{k-2,2}(M)\oplus W^{k-\frac{3}{2},2}(\Sigma),
$$
 where  $\mathcal{M}^{k,2}$ denotes the open subset of $S^{k,2}_2$ of metrics on $M$. Since $R_g$ and  $H_{g}$  involve derivatives of $g$ up to second order, the local expression of the scalar curvature and the mean curvature, for $k>\frac{n}{2}+2,$ gives that the above Sobolev spaces  are a Banach algebra under pointwise multiplication \cite{MS}, which implies that  $\Phi$  is a $C^{\infty}$ map.

As it is known,  given a compact manifold, there exist many scalar flat metrics with constant mean curvature on the boundary and constant scalar curvature with minimal boundary (see for instance \cite{E2,E1}). Then, for a smooth function $\rho: \Sigma\to\mathbb{R}$ or $\rho:  M\to\mathbb{R}$ we can  set 
$$\mathcal{M}_{0,\rho}=\{g\in \mathcal{M}^{k,2};\quad R_g=0\quad\mbox{and}\quad H_{g}=\rho\}$$
or
$$\mathcal{M}_{\rho,0}=\{g\in \mathcal{M}^{k,2};\quad R_g=\rho\quad\mbox{and}\quad H_{g}=0\}.,$$
respectively. The spaces  $\mathcal{M}_{0,\rho}$  and $\mathcal{M}_{\rho,0}$ are called set of the space of metrics with prescribed curvature.

We will say that $\sigma\in\mathbb R$ is an Steklov eigenvalue of the boundary operator $\partial/\partial\nu-c/(n-1)$, where $c$ is a constant, with nonzero eigenfunction $u$ if 
  \begin{equation} \label{stek0}
\left\{
  \begin{array}{rll}
\Delta u & = 0 & \mbox{ in } M\\
\dfrac{\partial u}{\partial \nu}-\dfrac{c}{n-1}u & =\sigma u & \mbox{ on }\Sigma.
\end{array}
  \right.
 \end{equation}

We prove the following result.

\begin{proposition}\label{subman} Let $(M,g_0)$ be a  compact manifold with non empty boun-dary $\Sigma$. Assume that $g_0$ is a scalar flat metric with constant mean curvature $c$ on the boundary. 
If the first Steklov eigenvalue of $\partial/\partial\nu-c/(n-1)$ is positive, then $\mathcal M_{0,c}$
 is a smooth submanifold
 of $\mathcal{M}^{k,2}$ for $g$ near $g_0.$
\end{proposition}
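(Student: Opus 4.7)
My approach is to apply the Implicit Function Theorem to the smooth map $\Phi: \mathcal{M}^{k,2} \to W^{k-2,2}(M) \oplus W^{k-3/2,2}(\Sigma)$ defined by $\Phi(g) = (R_g, 2H_g)$, so that $\mathcal{M}_{0,c} = \Phi^{-1}(0, 2c)$. It suffices to prove that the linearization $d\Phi_{g_0}$ is a split surjection; the splitting is automatic in this Hilbert-space setting, so the main task is surjectivity of $d\Phi_{g_0}$.

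The first step is to identify the cokernel. By the Green identity \eqref{green}, the formal $L^2$-adjoint $(d\Phi_{g_0})^*$ sends a function $V$ to the pair $(A_{g_0}^* V, B_{g_0}^* V)$, so $V \in \ker(d\Phi_{g_0})^*$ amounts to
\begin{equation*}
A_{g_0}^* V = 0 \text{ in } M, \qquad B_{g_0}^* V = 0 \text{ on } \Sigma.
\end{equation*}
Inserting $R_{g_0} = 0$ and $H_{g_0} = c$ into the trace equation \eqref{trace} (with $\kappa = \tau = 0$) reduces this to
\begin{equation*}
\Delta_{g_0} V = 0 \text{ in } M, \qquad \frac{\partial V}{\partial \nu} - \frac{c}{n-1} V = 0 \text{ on } \Sigma,
\end{equation*}
so $V$ would be a zero eigenfunction of the Steklov problem \eqref{stek0}. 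Since by hypothesis the first Steklov eigenvalue is strictly positive, zero is not in the Steklov spectrum, which forces $V \equiv 0$; hence $\ker(d\Phi_{g_0})^* = \{0\}$.

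To upgrade the triviality of the cokernel to genuine surjectivity, I will use the standard composite operator $\mathcal{L} := d\Phi_{g_0} \circ (d\Phi_{g_0})^*$ acting on scalar functions. By Green, $\mathcal{L}$ is formally self-adjoint with $\ker \mathcal{L} = \ker(d\Phi_{g_0})^* = \{0\}$. Provided $\mathcal{L}$ defines a regular (Lopatinski-Shapiro) elliptic boundary value problem, Fredholm theory then promotes the trivial kernel to an isomorphism on the appropriate Sobolev scales, so that for any target data $(f,\psi)$ one may solve $\mathcal{L} V = (f, \psi)$ and take $h := (d\Phi_{g_0})^* V$ to obtain $d\Phi_{g_0}(h) = (f,\psi)$, giving surjectivity. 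The main obstacle I anticipate is precisely this analytic verification: the mixed second-order interior/first-order oblique boundary structure of $\mathcal{L}$ makes the Lopatinski-Shapiro check delicate. Once this is in hand, the Implicit Function Theorem immediately yields that $\mathcal{M}_{0,c} = \Phi^{-1}(0,2c)$ is a smooth submanifold of $\mathcal{M}^{k,2}$ in a neighborhood of $g_0$, completing the proof.
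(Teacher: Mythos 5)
Your proof starts out along the same lines as the paper (Implicit Function Theorem for $\Phi(g)=(R_g,2H_g)$, splitting for free in the Hilbert setting), and your identification of $\ker (d\Phi_{g_0})^{*}$ via the trace equations and the Steklov positivity is correct. But after that you diverge from the paper and take the Fischer--Marsden/Corvino route: trivial cokernel plus the composite $\mathcal{L}:=d\Phi_{g_0}\circ(d\Phi_{g_0})^{*}$ plus Fredholm theory. You explicitly leave the key step — verifying that $\mathcal{L}$ is a regular (Lopatinski--Shapiro) elliptic boundary value problem — unproven, calling it ``the main obstacle.'' This is not a deferral of routine bookkeeping: it is precisely where the real analytic content lies. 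Worse, the composite is not even cleanly defined, since $(d\Phi_{g_0})^{*}V=(A_{g_0}^{*}V,B_{g_0}^{*}V)$ is a \emph{pair} of tensors (interior and boundary), while $d\Phi_{g_0}$ accepts a single interior tensor; untangling this requires the full machinery of overdetermined elliptic boundary systems, which you have not supplied. As written, the proposal is a proof sketch with an acknowledged gap at the crucial point, so it does not constitute a proof.

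The paper sidesteps all of this by never forming $\mathcal{L}$. It restricts to conformal perturbations $h=u\,g_0$. For such $h$, using $R_{g_0}=0$ and $H_{g_0}=c$, one has
\[
D\Phi_{g_0}(u\,g_0)=\Bigl(-(n-1)\Delta_{g_0}u,\ (n-1)\Bigl(\tfrac{\partial u}{\partial\nu}-\tfrac{c}{n-1}u\Bigr)\Bigr),
\]
so surjectivity of $D\Phi_{g_0}$ reduces to the solvability of a \emph{scalar, second-order} Poisson problem with a Robin/Steklov boundary condition — exactly the problem whose Fredholm alternative is governed by the Steklov eigenvalue you assume to be positive. (The paper states this only for the boundary factor, with $u$ harmonic, but the same ansatz with an inhomogeneous interior source hits the full target.) This completely avoids the fourth-order composite, the Lopatinski--Shapiro analysis, and the overdetermined-ellipticity issues that block your proposal. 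If you want to pursue your route, you would need to supply the regularity theory for the overdetermined operator $V\mapsto(A_{g_0}^{*}V,B_{g_0}^{*}V)$, à la Corvino--Eichmair--Miao; but the conformal ansatz is much cheaper here.
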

\begin{proof}
 Since the first eigenvalue of  \eqref{stek0} is positive, the Fredholm alternative (cf. \cite{GT}) implies the existence of a unique solution $u$ of  the following problem
\begin{equation*}\label{dirich0} 
\left\{
  \begin{array}{rll}
\Delta u & = 0 &\mbox{ in }M\\
\dfrac{\partial u}{\partial \nu}-\dfrac{c}{n-1}u & = \dfrac{f}{n-1} &\mbox{ on } \Sigma,
\end{array}
  \right.
 \end{equation*}
 where $f\in W^{k-\frac{3}{2},2}(\Sigma).$   Set $\Phi(g)=(R_g, 2H_g). $ If we take   $h = ug_0,$ we get that $D\Phi_{g_0}(h) = (0,f),$ i.e., $D\Phi_{g_0}$ is surjective under the second coordinate.   Since the kernel of $D\Phi_{g_0}(h)$ splits\footnote{The split of kernel of $D\Phi_{g_0}$  means  that $\mathcal{M}^{k,2}= \mbox{ker}D\Phi_{g_0}(h)\oplus Y,$ for some closed  subspace $Y\subset\mathcal{M}^{k,2}$ with $Y\cap \mbox{ker}D\Phi_{g_0}(h)=\emptyset.$} The result follows by the Implicit Function Theorem.
\end{proof}

Analogously,
we  consider the  Schr\"odinger operator $\Delta+c/(n-1)$ with  Neumann boundary condition, and for that one have the following result:

\begin{proposition}\label{subman2} Let $(M,g_0)$ be a  compact manifold with boundary.\linebreak
  Assume that $g_0$ is  a metric with constant scalar  curvature $c$ and minimal boundary. 
If the first eigenvalue of $(n-1)\Delta_{g_0}+c$   is positive with  Neumann  boundary, then $\mathcal M_{c,0}$ is a smooth submanifold
 of $\mathcal{M}^{k,2}$ for $g$ near $g_0.$
\end{proposition}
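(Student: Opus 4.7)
The plan is to mirror the argument of Proposition \ref{subman}, with the roles of the two curvature factors exchanged: conformal deformations will now be used to handle the scalar curvature factor, and the associated linear boundary value problem will be of Neumann type rather than Steklov type.

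First I would restrict the linearization $D\Phi_{g_0}$ to conformal directions $h = u g_0$ with $u \in W^{k,2}(M)$. Using the linearization formulas for $R_g$ and $H_g$ recorded at the start of Section \ref{preli}, together with the assumptions $R_{g_0} = c$ and $H_{g_0} = 0$, a direct computation yields
\begin{equation*}
D\Phi_{g_0}(u g_0) \;=\; \bigl(-(n-1)\Delta_{g_0} u - c u,\; (n-1)\partial_\nu u\bigr).
\end{equation*}
Given $f \in W^{k-2,2}(M)$, realizing the value $(f, 0)$ in the target is therefore equivalent to solving the Neumann boundary value problem
\begin{equation*}
(n-1)\Delta_{g_0} u + c u = -f \quad \text{in } M, \qquad \partial_\nu u = 0 \quad \text{on } \Sigma.
\end{equation*}

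The hypothesis that the first Neumann eigenvalue of $(n-1)\Delta_{g_0} + c$ is positive says precisely that this self-adjoint elliptic operator has trivial kernel, so the Fredholm alternative produces a unique weak solution, which by elliptic regularity lies in $W^{k,2}(M)$. Setting $h = u g_0$ then gives $D\Phi_{g_0}(h) = (f, 0)$, exhibiting surjectivity of $D\Phi_{g_0}$ onto the first factor of the target (the analogous statement about the second factor being handled by non-conformal perturbations, exactly as in Proposition \ref{subman}). Since $\mathcal{M}^{k,2}$ is modeled on a Hilbert space, the closed kernel of $D\Phi_{g_0}$ is automatically complemented, and the Implicit Function Theorem applies to conclude that $\mathcal{M}_{c,0} = \Phi^{-1}(c, 0)$ is a smooth submanifold of $\mathcal{M}^{k,2}$ in a neighborhood of $g_0$.

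The main obstacle is solvability of the Neumann problem: the positivity of the first eigenvalue is what guarantees that conformal deformations alone are rich enough to realize every first-order perturbation of the scalar curvature while preserving the minimal boundary condition $H_g = 0$. Without this spectral assumption, $(n-1)\Delta_{g_0} + c$ could have zero in its spectrum — for instance when $c = 0$, constants lie in the Neumann kernel — and the conformal directions would no longer suffice to conclude the IFT step.
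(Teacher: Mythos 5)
Your proposal correctly supplies the argument that the paper itself leaves implicit when it declares Proposition \ref{subman2} to be analogous to Proposition \ref{subman}: the linearization $D\Phi_{g_0}(ug_0) = \bigl(-(n-1)\Delta_{g_0}u - cu,\ (n-1)\partial_\nu u\bigr)$ is computed correctly for $R_{g_0}=c$, $H_{g_0}=0$, and the positivity of the first Neumann eigenvalue of $(n-1)\Delta_{g_0}+c$ guarantees unique solvability of the resulting Neumann problem, so the IFT step goes through. The only small inaccuracy is the parenthetical stating that the complementary surjectivity is handled ``by non-conformal perturbations, exactly as in Proposition \ref{subman}'': the written proof of \ref{subman} likewise uses only conformal deformations and leaves that residual step implicit, so there is nothing there to mirror; this does not affect the validity of your argument.
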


\begin{remark}
Along the same lines, it is possible to show under the same conditions as above that the sets defined in the following are submanifolds of $\mathcal{M}^{2,p}\cap\{g; \mbox{ Vol}(M,g)=1\}$ and  $\mathcal{M}^{2,p}\cap\{g; \mbox{ Area}(\partial M,g)=1\}:$ 
$$\Xi_{0,c}=\{g\in \mathcal{M}^{2,p}\cap\{g; \mbox{ Vol}(M,g)=1\};\; R_g=0\quad\mbox{and}\quad H_{g}=c\}$$
and   
$$\Xi_{c,0}=\{g\in \mathcal{M}^{2,p}\cap\{g; \mbox{ Area}(\partial M,g)=1\};\; R_g=c\quad\mbox{and}\quad H_{g}=0\}.$$
Indeed, it is sufficient to consider the functional $$\Psi(g)=\left(R_g-\int_MR_gdv,2H_g-2\int_{\partial M}H_g da\right).$$ Note that $\Psi(g)=0$ if and only if $R_g$ and $H_g$ are constants.

\end{remark}

\section{Variational point of view and proof of Theorem \ref{teo1}}\label{vari}

In this section, we   consider the  problem of finding stationary points for the volume functional and  area of the boundary  functional on the space of  prescribed metrics considered in  Section \ref{constk}.

We start this sections with the following lemma, which is motivated by the proof of Theorem 1 in \cite{Fischer-Colbrie-Schoen}.

\begin{lemma}\label{help}
 Let $g_0\in \mathcal M_{0,c}$ be a metric such that the first eigenvalue of \eqref{stek0}  is positive. Then there exist a positive function $u$ on $M$ and constants  $\delta_0,\delta>0$ such that
 \begin{equation}\label{eq006}
     \left\{ \begin{array}{rcl}
\Delta u+\delta_0u & = & 0\mbox{ in }M\\
\displaystyle \frac{\partial u}{\partial \nu}-\frac{c+\delta}{n-1} u & = & 0 \mbox{ on } \Sigma.
\end{array}
  \right.
 \end{equation}
\end{lemma}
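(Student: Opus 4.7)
The plan is to realize the desired function $u$ as the principal eigenfunction of a Robin problem
\begin{equation*}
-\Delta u = \mu u \text{ in } M, \qquad \frac{\partial u}{\partial \nu} = \frac{c+\delta}{n-1}\, u \text{ on } \Sigma,
\end{equation*}
for a small positive shift $\delta$. Its smallest eigenvalue $\mu_1(\delta)$ is simple, admits a strictly positive eigenfunction $u_\delta$ on $\overline{M}$ (by the Rayleigh--Krein characterization together with the Hopf lemma), and has the variational description
\begin{equation*}
\mu_1(\delta) = \inf_{u \in H^1(M) \setminus \{0\}} \frac{\int_M |\nabla u|^2 \, dv - \frac{c+\delta}{n-1}\int_\Sigma u^2 \, da}{\int_M u^2 \, dv}.
\end{equation*}
Since this quotient depends affinely and decreasingly on $\delta$, the function $\delta \mapsto \mu_1(\delta)$ is concave and strictly decreasing, in particular continuous. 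Thus if I can show that $\mu_1(0) > 0$, then for sufficiently small $\delta > 0$ the eigenvalue $\mu_1(\delta)$ is still positive, and setting $u := u_\delta$ and $\delta_0 := \mu_1(\delta)$ solves \eqref{eq006}.

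The crucial step is therefore to extract $\mu_1(0) > 0$ from the Steklov hypothesis. Multiplying $-\Delta u_0 = \mu_1(0)\, u_0$ by the positive principal eigenfunction $u_0$ and integrating by parts gives
\begin{equation*}
\int_M |\nabla u_0|^2 \, dv - \frac{c}{n-1}\int_\Sigma u_0^2 \, da = \mu_1(0) \int_M u_0^2 \, dv.
\end{equation*}
On the other hand, the first eigenvalue of the shifted Steklov problem \eqref{stek0} admits the Rayleigh characterization
\begin{equation*}
\sigma_1 = \inf_{u \in H^1(M),\, u|_\Sigma \not\equiv 0} \frac{\int_M |\nabla u|^2 \, dv - \frac{c}{n-1}\int_\Sigma u^2 \, da}{\int_\Sigma u^2 \, da},
\end{equation*}
so using $u_0 > 0$ as a test function yields $\mu_1(0)\int_M u_0^2 \, dv \geq \sigma_1 \int_\Sigma u_0^2 \, da > 0$, whence $\mu_1(0) > 0$ as required.

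The main point that I expect to require some care is the validity of the Rayleigh characterization of $\sigma_1$ over the whole Sobolev space $H^1(M)$ rather than only over harmonic functions. This holds because, for any prescribed boundary trace, the harmonic extension minimizes the Dirichlet energy and hence cannot raise the shifted Steklov Rayleigh quotient, so the two infima coincide. Once this observation is recorded, everything else reduces to the continuity and monotonicity of $\mu_1$ in $\delta$, which are standard.
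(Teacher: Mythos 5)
Your proof is correct, but it takes a genuinely different route from the paper's. The paper adapts an argument of Fischer-Colbrie--Schoen: it fixes a point $x_0$, works on exhausting subdomains $\Omega_R = (M\setminus\Sigma)\cap B_R(x_0)$, solves an inhomogeneous mixed Dirichlet--Robin boundary value problem on $\Omega_R$ via the Fredholm alternative (invoking a monotonicity property $\sigma_1(\Omega_R)\geq\sigma_1(M)>0$ of the local Steklov eigenvalue), deals with corner regularity at the interface $\partial\Omega_R^D\cap\Sigma$ using Lieberman's results, shifts the solution by $1$ to pass to the homogeneous equation, and finally establishes positivity by a maximum-principle/unique-continuation argument. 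Your approach instead directly identifies $u$ as the principal eigenfunction of the Robin problem $-\Delta u = \mu u$, $\partial_\nu u = \frac{c+\delta}{n-1}u$, which is automatically positive; the entire burden then shifts to showing $\mu_1(0)>0$, which you extract from the Steklov hypothesis by a clean comparison of Rayleigh quotients (testing the shifted Steklov quotient with the Robin ground state $u_0$), and continuity of $\delta\mapsto\mu_1(\delta)$ —- as an infimum of affine functions -— completes the argument. Your version is shorter, avoids the exhaustion and corner-regularity machinery (which is arguably superfluous when $M$ is compact), and makes the role of the Steklov hypothesis completely transparent; the paper's version, on the other hand, more closely mirrors the non-compact minimal-surface setting from which Lemma~\ref{help} is borrowed. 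One small caveat worth recording in your write-up: the Rayleigh characterization of the shifted Steklov eigenvalue $\sigma_1$ over all of $H^1(M)$ requires, as you correctly note, the observation that passing to the harmonic extension with the same trace can only decrease the numerator; this is the one place your argument leans on something beyond pure linear algebra, and you handle it properly.
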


\begin{proof}
Fix a point $x_0\in M.$ Let $\Omega_R=\{(M\setminus\Sigma)\cap B_R(x_0)\}.$
Let $\partial \Omega_R^D=\{\partial \Omega_R\setminus\Sigma\}$ and $\partial \Omega_R^N=\{\partial \Omega_R\setminus\Omega_R^D\}.$ 
In this context, there is an associated mixed boundary value problem and since clearly a  monotonicity property holds for the local eigenvalue, $\sigma_1(\Omega_R)\geq\sigma_1(M)>0$, 
the Fredholm alternative (see Lieberman \cite{Lie}) gives a solution of function $v\in C^2(\Omega_R\cup \partial \Omega_R^N)\cap C^0(\overline{\Omega_R})$ such that 

$$
\left\{
  \begin{array}{rcll}
\Delta v +\delta_0v& = & -\delta_0 & \mbox{in}\quad \Omega_R\\
\displaystyle \frac{\partial v}{\partial\nu}-\frac{c+\delta}{n-1}v & = & \displaystyle\frac{c+\delta}{n-1}  & \mbox{on}\quad \partial \Omega_R^N\\
v& = &0  & \mbox{on}\quad \partial \Omega_R^D
\end{array}
  \right.
$$
Moreover,  in the vertex $\partial \Omega_R^D\cap\Sigma$ we have that $\nu_{\Omega_R}\cdot\nu<0,$ then $v\in C^1(\overline{\Omega_R})$ by Lieberman \cite{Lie2}
for some  constants $\delta_0,\delta >0$.  Using elliptic Schauder estimates there exists a smooth function $v$  from the vertex $\partial \Omega_R^D \cap \Sigma$. Then, consider $u=v+1 $ which is a solution of 
$$
\left\{
  \begin{array}{rcl}
\Delta u+\delta_0u&=& 0\quad\mbox{in}\quad \Omega_R\\
\displaystyle\frac{\partial u}{\partial \nu}-\frac{c+\delta}{n-1}u&=&0  \quad\mbox{on}\quad \partial \Omega_R^N\\
u&=&1  \quad\mbox{on}\quad\partial  \Omega_R^D.
\end{array}
  \right.
$$

 We claim that  $u$ is a positive function. Suppose that $u \geq 0$, then by the strong maximum principle, we obtain that $u>0.$
Suppose that $D\subset \{x \in \Omega_R; u(x)<0\}$ is non empty. Since $D$ is a bounded domain,  the first eigenvalue  $\sigma_1(D)$ is positive and, thus, $\Delta u+\delta_0u=0$ in $D$ and $u=0$ on $\partial D.$ Hence $v=0$ on $D$. This contradicts the unique continuation property and therefore $u>0.$

\end{proof}

Now, we state the following key ingredient in the proof of Theorem \ref{teo1}, whose proof is inspired by Proposition 2.1. of \cite{MT}. It proceeds by a careful analysis using the  sub- and super-solutions methods. 

\begin{proposition} \label{target}
Let $(M,g_0)$ be a compact Riemannian manifold with non-empty boundary  $\Sigma$.
  Assume that $g_0\in \mathcal{M}_{0,c}$  and  the first  eigenvalue of   \eqref{stek0} with respect to $g_0$ is positive. Let $g(t)=g_0+th$ be a smooth one-parameter family of Riemannian metrics for $|t|$ small enough and $h$ a smooth symmetric $(0,2)$-tensor on $M$. Then there  exist constants $t_0>0$ and $\varepsilon>0$ such that for $|t|<t_0$ there exists  a unique 
smooth positive function $\Phi(t)$ on $M$ such that $|\Phi(t) - 1|\leq \varepsilon$  and
\begin{equation}\label{neuscal}
\left\{
  \begin{array}{rcl}
\displaystyle \Delta_{g(t)} \Phi(t)-\beta R_{g(t)}\Phi(t)&=& 0\quad\mbox{in}\quad M\\
\displaystyle\frac{\partial \Phi(t)}{\partial \nu}+2\beta H_{g(t)}\Phi(t) -\displaystyle 2\beta c\Phi(t)^{\frac{n}{n-2}}& = & 0 \quad\mbox{on}\quad\Sigma,
\end{array}
  \right.
 \end{equation}
 where $\beta=\frac{n-2}{4(n-1)}.$ 
\end{proposition}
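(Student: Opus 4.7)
The plan is to treat the system as a semilinear elliptic boundary-value problem that is linear in the interior and semilinear on $\Sigma$ (through the nonlinearity $\Phi^{n/(n-2)}$), and to notice that $\Phi\equiv 1$ is a solution at $t=0$ thanks to the normalisations $R_{g_0}=0$, $H_{g_0}=c$, which force $2\beta c-2\beta c=0$ on $\Sigma$. To perturb off this trivial solution for $|t|$ small, we would build explicit ordered sub- and super-solutions of the form $\Phi^{\pm}=1\pm K|t|u$, where $u>0$ is the function supplied by Lemma~\ref{help}, and then invoke the standard monotone-iteration theorem for semilinear elliptic equations with nonlinear Robin-type boundary conditions. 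Uniqueness in the $\varepsilon$-ball, positivity, and smoothness are then recovered at the end by an implicit-function-theorem argument around the linearisation at $(t,\Phi)=(0,1)$.

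The key computation is the verification that $\Phi^{+}=1+K|t|u$ is a supersolution (the subsolution case being symmetric). In the interior, using $R_{g(t)}=O(|t|)$ and $\Delta_{g(t)}u=-\delta_0 u+O(|t|)$ from Lemma~\ref{help},
\begin{equation*}
\Delta_{g(t)}\Phi^{+}-\beta R_{g(t)}\Phi^{+}\le -K|t|\delta_0 u+C_1|t|+O(K|t|^{2})\le 0,
\end{equation*}
provided $K\delta_0 \min_M u\ge C_1$ and $|t|$ is small. On $\Sigma$, the Taylor expansion $(1+K|t|u)^{n/(n-2)}=1+\tfrac{n}{n-2}K|t|u+O(K^{2}t^{2})$ combines with the algebraic identity $2\beta c\bigl(1-\tfrac{n}{n-2}\bigr)=-\tfrac{c}{n-1}$ and the boundary condition $\partial u/\partial\nu=\tfrac{c+\delta}{n-1}u$ from Lemma~\ref{help} so that the leading contribution collapses to $K|t|\tfrac{\delta}{n-1}u$, yielding
\begin{equation*}
\frac{\partial \Phi^{+}}{\partial \nu}+2\beta H_{g(t)}\Phi^{+}-2\beta c(\Phi^{+})^{n/(n-2)}\ge K|t|\frac{\delta}{n-1}u-C_2|t|+O(K|t|^{2})\ge 0,
\end{equation*}
for $K$ large and $|t|$ small. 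The role of Lemma~\ref{help} is thus to supply positive margins $\delta_0,\delta>0$ which absorb the $O(|t|)$ errors from varying the metric; this is exactly where the strict positivity of the first Steklov eigenvalue of $\partial/\partial\nu-c/(n-1)$ enters decisively.

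With $\Phi^{-}\le \Phi^{+}$ bracketing the trivial solution, the classical monotone-iteration scheme (adding a large Lipschitz shift to both sides to make the maps order-preserving, then iterating from $\Phi^{-}$ upward and $\Phi^{+}$ downward) produces a classical solution $\Phi(t)$ with $\Phi^{-}\le \Phi(t)\le \Phi^{+}$, so that $|\Phi(t)-1|\le K|t|\|u\|_\infty\le\varepsilon$ and $\Phi(t)>0$ for $|t|<t_0$. To upgrade to uniqueness in the $\varepsilon$-ball and to smoothness, we would apply the implicit function theorem to
\begin{equation*}
F(t,\Phi)=\bigl(\Delta_{g(t)}\Phi-\beta R_{g(t)}\Phi,\ \tfrac{\partial \Phi}{\partial \nu}+2\beta H_{g(t)}\Phi-2\beta c\Phi^{n/(n-2)}\bigr)
\end{equation*}
between the appropriate Sobolev spaces. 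A direct computation using $R_{g_0}=0$ and $H_{g_0}=c$ identifies the linearisation $D_{\Phi}F(0,1)\phi=(\Delta_{g_0}\phi,\ \partial\phi/\partial\nu-\tfrac{c}{n-1}\phi)$, which is an isomorphism by the positive Steklov eigenvalue hypothesis together with the Fredholm theory for this oblique boundary problem; IFT then delivers a unique smooth branch of solutions that must coincide with the one produced by the barrier method, and elliptic regularity promotes $\Phi(t)$ to a smooth function on $M$. The hard part in this plan is organising the error terms uniformly in $t$ in the two supersolution computations so that a single large constant $K$ works simultaneously in $M$ and on $\Sigma$; once this is done, all remaining steps reduce to standard Fredholm, IFT, and regularity bookkeeping.
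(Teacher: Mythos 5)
Your proposal is correct and follows essentially the same strategy as the paper: construct ordered barriers of the form $1\pm (\text{const})|t|u$ with $u$ the strictly positive auxiliary function from Lemma~\ref{help} (whose positive margins $\delta_0,\delta$ absorb the $O(|t|)$ errors coming from $R_{g(t)}=O(|t|)$ and $H_{g(t)}-c=O(|t|)$), then run the monotone sub/super-solution scheme to produce $\Phi(t)$ pinched between the barriers. The paper rescales $u$ once to make the margin beat the error constants and treats $t>0$ and $t<0$ separately, whereas you introduce a large constant $K$ and work with $|t|$; these are cosmetically different but equivalent. The one genuine divergence is the uniqueness step: the paper argues directly, taking the difference $\Phi_1(t)-\Phi_2(t)$, testing the boundary equation against it, and using the positivity of the first Steklov eigenvalue to produce a coercive lower bound $\delta\int_\Sigma(\Phi_1-\Phi_2)^2$ that dominates the $O(|t|+\varepsilon)$ upper bound coming from the nonlinearity, forcing $\Phi_1=\Phi_2$; you instead invoke the implicit function theorem at $(t,\Phi)=(0,1)$ where the linearization is $(\Delta_{g_0},\ \partial/\partial\nu-\tfrac{c}{n-1})$, which is invertible by the same eigenvalue hypothesis. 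Both uniqueness arguments are valid and rest on the identical spectral input; the paper's energy estimate is self-contained at the level of the nonlinear equation, while your IFT route has the side benefit of directly yielding the differentiability of $t\mapsto\Phi(t)$ that Corollary~\ref{derivative} otherwise establishes by a compactness argument.
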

\begin{proof}
For each $t$ we define the following boundary value operator 
\begin{equation*}
(t,v)\mapsto\left\{
  \begin{array}{rcll}
\mathcal Lv&:=& \displaystyle \Delta_{g(t)} v-\beta R_{g(t)}v & \mbox{in}\quad M,\\
\mathcal Bv&:=&\displaystyle \frac{\partial v}{\partial \nu}+2\beta H_{g(t)}v  -2\beta cv^\frac{n}{n-2}  & \mbox{on}\quad\Sigma.
\end{array}
  \right.
 \end{equation*} 
Since  the first eigenvalue of \eqref{stek0}  is positive, we can take a positive function  $u$ as in Lemma \ref{help}. Then for such a function
$$\begin{array}{rcl}
     \mathcal B(1 +tu) & = & \displaystyle t\langle \nabla_{g(t)}u,\nu\rangle+2\beta H_{g(t)}(1 + tu) -2\beta c(1 + tu)^\frac{n}{n-2}\\
     \\
     & = &\displaystyle\delta tu+ t\langle (\nabla_{g(t)}-\nabla_{g(0)})u,\nu\rangle +2\beta (H_{g(t)}-c)(1 + tu)\\
     \\
     & & \displaystyle  +\frac{c}{n-1} tu+2\beta c(1+tu) -2\beta c(1 + tu)^\frac{n}{n-2}.
\end{array}$$
Also we have
$$\begin{array}{rcl}
     \mathcal B(1 -tu) & = & \displaystyle -t\langle \nabla_{g(t)}u,\nu\rangle+2\beta H_{g(t)}(1 - tu) -2\beta c(1 - tu)^\frac{n}{n-2}\\
     \\
     & = &\displaystyle-\delta tu- t\langle (\nabla_{g(t)}-\nabla_{g(0)})u,\nu\rangle +2\beta (H_{g(t)}-c)(1 - tu)\\
     \\
     & & \displaystyle  -\frac{c}{n-1} tu+2\beta c(1-tu) -2\beta c(1 - tu)^\frac{n}{n-2}.
\end{array}$$
Thus, for $t>0$ small enough we get
$$\mathcal B(1 +tu)\geq \delta tu-C_1t-C_2t^2$$
and
$$\mathcal B(1 +tu)\leq -\delta tu+C_1t+C_2t^2,$$
where $C_1=C_1(g_0,h)$ and $C_2=C_2(g_0,h,u)$ are positive constants. Since $b=\min_Mu>0$ and $u$ solves \eqref{eq006}, then by rescaling $u$, we may assume that $\delta b>2C_1$. Thus, for $t>0$ small enough
\begin{equation*}\label{eq007}
    \mathcal B(1 - tu)\leq0\leq \mathcal B(1 +tu)\quad \mbox{on}\quad \Sigma.
\end{equation*}

 It remains to verify the behaviour in the interior. Note that
 $$\begin{array}{rcl}
\mathcal L(1 +tu) & = & \displaystyle t\Delta_{g_{0}} u+t\left(\Delta_{g(t)}-\Delta_{g_{0}}\right) u-\beta R_{g(t)}(1+tu)
 \end{array}$$
 and
  $$\begin{array}{rcl}
\mathcal L(1 -tu) & = & \displaystyle -t\Delta_{g_{0}} u-t\left(\Delta_{g(t)}-\Delta_{g_{0}}\right) u-\beta R_{g(t)}(1-tu).
 \end{array}$$
 
As before, using that $u$ satisfies \eqref{eq006}, by rescaling, we get that for $t>0$ small enough
 \begin{equation*}\label{eq008}
     \mathcal L(1+tu)\leq 0\leq \mathcal L(1-tu) \quad\mbox{in}\quad M.
 \end{equation*}

Then  by the method of sub- and super-solutions  (cf. \cite[Theorem 2.3.1]{Sa} for example) there exists a solution of the boundary value problem  \eqref{neuscal} with $1 - tu\leq\Phi(t)\leq1 + tu$ provided $t>0$. The proof for $t<0$ is similar.

In order to prove uniquenesss, assume that  $\Phi_1(t),\Phi_2(t)\in (1-\varepsilon,1+\varepsilon)$ are solutions of 
\eqref{neuscal}. Since the first eigenvalue of \eqref{stek0} is positive,   
there exists $\delta=\delta(g_0,h)>0$ such that 
\begin{align*}
\int_{\Sigma}\left(\Phi_{1}(t)-\Phi_{1}(t)\right) \left(\frac{\partial }{\partial \nu}(\Phi_{1}(t)-\Phi_{2}(t))-\frac{c}{n-1}(\Phi_{1}(t)-\Phi_{2}(t)) \right) da_{g(t)}\\
\geq \delta \int_{\Sigma}\left(\Phi_{1}(t)-\Phi_{2}(t)\right)^{2} da_{g(t)}.
\end{align*}
On the other hand, since 
\[
  \int_{\Sigma}(\mathcal B(\Phi_{1}(t))-\mathcal B(\Phi_{2}(t)))(\Phi_{1}(t)-\Phi_{2}(t))da_{g(t)}=0,
\]
we have 
\begin{align*}
\int_{\Sigma}\left(\Phi_{1}(t)-\Phi_{1}(t)\right)& \left(\frac{\partial }{\partial \nu}(\Phi_{1}(t)-\Phi_{2}(t))-\frac{c}{n-1}(\Phi_{1}(t)-\Phi_{2}(t)) \right) da_{g(t)}\\
&=2\beta \int_{\Sigma} H_{g(t)}(\Phi_{1}(t)-\Phi_{2}(t))^{2} da_{g(t)} \\
& -\frac{c}{n-1}\int_{\Sigma}(\Phi_{1}(t)-\Phi_{2}(t))^2 da_{g(t)}\\
&+2\beta c\int_{\Sigma}\left(\Phi_{1}(t)^{\frac{n}{n-2}}-\Phi_{2}(t)^{\frac{n}{n-2}}\right)\left(\Phi_{1}(t)-\Phi_{2}(t)\right)da_{g(t)} \\
& \leq\left(K_1|t|+K_2 \varepsilon\right) \int_{\Sigma}\left(\Phi_{1}(t)-\Phi_{2}(t)\right)^{2} da_{g(t)},
\end{align*}
where $K_1$ and $K_2$ are constants depending only on $g_0$ and
$h$.  
Hence we have 
$$
\delta \int_{\Sigma}\left(\Phi_{1}(t)-\Phi_{2}(t)\right)^{2} da_{g(t)}\leq \left(K_5|t|+K_6 \varepsilon\right) \int_{\Sigma}\left(\Phi_{1}(t)-\Phi_{2}(t)\right)^{2} da_{g(t)},
$$
which for $|t|$ and $\varepsilon>0$ sufficiently small enough we get that
 $\Phi_1=\Phi_2$.
\end{proof}

\begin{corollary}\label{derivative}
Under the same conditions of Proposition \ref{target}, $\hat\Phi=\frac{d}{dt}\big|_{t=0}\Phi$ exists and is the unique smooth function on $M$ satisfying 
\begin{equation}\label{deriv}
\left\{
  \begin{array}{rcl}
\displaystyle \Delta_{g_0} \hat\Phi-\frac{n-2}{4(n-1)} R'(0)&=& 0\quad\mbox{in}\quad M\\
\displaystyle\frac{\partial \hat\Phi}{\partial \nu}+\frac{n-2}{2(n-1)}H'(0)-\frac{c}{n-1}\hat\Phi&=&0 \quad\mbox{on}\quad\Sigma.
\end{array}
  \right.
 \end{equation}
\end{corollary}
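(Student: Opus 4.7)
The plan is to derive Corollary \ref{derivative} by showing that the implicit function theorem applies to \eqref{neuscal} at $t=0$, giving smooth dependence of $\Phi(t)$ on $t$, and then differentiating \eqref{neuscal} at $t=0$ term by term.

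First I would observe that $\Phi(0)\equiv 1$. Indeed, since $g_0\in\mathcal{M}_{0,c}$ we have $R_{g_0}=0$ and $H_{g_0}=c$, so the constant function $1$ solves \eqref{neuscal} at $t=0$; by the uniqueness part of Proposition \ref{target}, $\Phi(0)\equiv 1$.

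Next I would recast Proposition \ref{target} as an implicit function theorem statement. Define
\begin{equation*}
F(t,\Phi)=\Bigl(\Delta_{g(t)}\Phi-\beta R_{g(t)}\Phi,\;\tfrac{\partial\Phi}{\partial\nu}+2\beta H_{g(t)}\Phi-2\beta c\Phi^{n/(n-2)}\Bigr),
\end{equation*}
viewed as a smooth map between appropriate Sobolev spaces (using the Banach-algebra property for $k>n/2+2$). The partial Fr\'echet derivative $D_\Phi F(0,1)$ acts on $\phi$ by
\begin{equation*}
\phi\longmapsto\Bigl(\Delta_{g_0}\phi,\;\tfrac{\partial\phi}{\partial\nu}-\tfrac{c}{n-1}\phi\Bigr),
\end{equation*}
after using $R_{g_0}=0$, $H_{g_0}=c$, and $\tfrac{d}{d\Phi}(\Phi^{n/(n-2)})|_{\Phi=1}=\tfrac{n}{n-2}$, which gives the boundary coefficient $2\beta c-2\beta c\cdot\tfrac{n}{n-2}=-\tfrac{4\beta c}{n-2}=-\tfrac{c}{n-1}$. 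This is exactly the Steklov operator in \eqref{stek0}, which by hypothesis has positive first eigenvalue and is therefore an isomorphism by the Fredholm alternative. The implicit function theorem then ensures $t\mapsto\Phi(t)$ is $C^1$ (indeed smooth), so $\hat\Phi=\tfrac{d}{dt}\big|_{t=0}\Phi(t)$ exists.

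To identify the equations satisfied by $\hat\Phi$, I would differentiate \eqref{neuscal} at $t=0$. For the interior equation, since $\Phi(0)\equiv 1$ is constant, $(\tfrac{d}{dt}|_{0}\Delta_{g(t)})(1)=0$ (the variation of the Laplacian involves derivatives of its argument) and $R_{g_0}=0$, so
\begin{equation*}
\Delta_{g_0}\hat\Phi-\beta R'(0)=0\quad\text{in }M.
\end{equation*}
For the boundary equation, using $\Phi(0)\equiv 1$ (so $\nabla\Phi(0)=0$ makes any variation of $\nu$ invisible), $H_{g_0}=c$, and $\tfrac{d}{d\Phi}\Phi^{n/(n-2)}|_{\Phi=1}=\tfrac{n}{n-2}$, one obtains
\begin{equation*}
\tfrac{\partial\hat\Phi}{\partial\nu}+2\beta H'(0)+2\beta c\hat\Phi-2\beta c\cdot\tfrac{n}{n-2}\hat\Phi=0\quad\text{on }\Sigma,
\end{equation*}
and simplifying the coefficient of $\hat\Phi$ with $\beta=(n-2)/(4(n-1))$ yields $-c/(n-1)$, recovering \eqref{deriv}. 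Uniqueness of $\hat\Phi$ follows from the same Fredholm argument applied to the difference of any two solutions of \eqref{deriv}, which satisfies the homogeneous Steklov problem and must therefore vanish.

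The main obstacle is the justification that $\Phi(t)$ is genuinely differentiable in $t$ rather than merely continuous; this is resolved by the IFT step above, which hinges on the positivity of the first Steklov eigenvalue of $\partial/\partial\nu-c/(n-1)$. Once that is in place, the actual derivative computation is routine and is made tractable by the fortunate fact that $\Phi(0)\equiv 1$ eliminates all the terms that would otherwise arise from differentiating the geometric operators against a non-constant base function.
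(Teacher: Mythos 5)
Your proposal is correct but proceeds along a genuinely different route from the paper. The paper does not invoke the implicit function theorem at this point; instead it forms the difference quotient $w(t)=(\Phi(t)-1)/t$, rewrites \eqref{neuscal} as a boundary value problem \eqref{def} for $w(t)$, uses the a priori bound $|\Phi(t)-1|\le |t|u$ from Proposition \ref{target} to get a $t$-uniform bound on the right-hand side, and then extracts a convergent subsequence via H\"older and Schauder estimates for oblique boundary value problems; the limit solves \eqref{deriv}, and uniqueness (Fredholm alternative for the Steklov operator \eqref{stek0}) upgrades subsequential to full convergence, identifying the limit with $\frac{d}{dt}\big|_{t=0}\Phi$. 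Your IFT argument --- checking that $D_\Phi F(0,1)=(\Delta_{g_0}\cdot,\;\partial_\nu\cdot-\frac{c}{n-1}\cdot)$ is an isomorphism and then differentiating \eqref{neuscal} directly, exploiting $\Phi(0)\equiv 1$ to kill the awkward variation-of-$\nu$ and variation-of-$\Delta_{g(t)}$ terms --- is cleaner and in fact yields smooth rather than merely differentiable dependence on $t$; one just needs the remark (which you make implicitly) that the branch of solutions produced by the IFT coincides with the one from Proposition \ref{target} by the uniqueness there. The trade-off is that your route quietly re-derives more than the paper needs at this spot (smoothness of $t\mapsto\Phi(t)$), whereas the paper's difference-quotient argument is tailored to extract exactly the one derivative required, at the cost of a compactness/subsequence step; it also keeps the construction purely within the sub-/super-solution framework already set up in Proposition \ref{target}, so it does not require checking that the nonlinear map $F$ is $C^1$ between the relevant Banach spaces (which is true, but is an additional verification when the exponent $n/(n-2)$ is non-integral). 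Both arguments rest on the same analytic hinge: the invertibility of the linearized Steklov operator guaranteed by the positive first eigenvalue hypothesis.
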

\begin{proof}

Let $\Phi(t)$ be
a  solution of \eqref{neuscal} for $t\neq0$ given by Proposition \ref{target} and let $u$ be a function given by Lemma \ref{help}. For this solution  $|\Phi(t)-1|\leq |t|u$. 
If we set $w(t)=(\Phi(t)-1)/t$, this satisfies the following boundary equation:

\begin{equation}\label{def}
\left\{
  \begin{array}{rcll}
  \Delta_{g(t)} w(t)&=&\displaystyle-\frac{n-2}{4(n-1)}\left(\frac{R_{g(t)}}{t}+R_{g(t)}w(t)\right) &\text{\ in $M$}\\
  \\
\displaystyle     \frac{\partial w(t)}{\partial \nu}&=&\displaystyle \frac{n-2}{2(n-1)}\left(\frac{c - H_{g(t)}}{t}-H_{g(t)}w(t)\right.\\
\\
& &\displaystyle\left. +c\frac{ \Phi(t)^{\frac{n}{n-2}}-1 }t\right) & \text{\ on $\Sigma$}
     \end{array}
  \right.
 \end{equation}

We observe that there exists a constant, which does not depend on $t$ and $M$, so that bounds the  right side of \eqref{def}. Then by applying standard H\"older  and the  Schauder estimates for the oblique boundary  value problems  (see \cite[Theorem 8.29]{GT} and \cite[Lemma 6.29]{GT}), we can find a subsequence which converge to a solution $\hat \Phi$ of \eqref{def} when $t_j\to0$. Since $R_{g(0)}=0$ and the first  eigenvalue of   \eqref{stek0} with respect to $g_0$ is positive, we have  uniqueness by  Fredholm alternative, and hence  $\hat\Phi=\frac{d}{dt}\big|_{t=0}\Phi.$
\end{proof}

Alternatively we consider the following result.

\begin{proposition} \label{target2}
Let $(M,g_0)$ be a compact Riemannian manifold with non-empty boundary  $\Sigma$. Assume that $g_0\in \mathcal{M}_{c,0}$  and  the first  eigenvalue with Neumann  boundary of $\Delta_{g_0}+\frac{c}{n-1}$   is positive. Let $g(t)=g_0+th$ be a smooth one-parameter family of Riemannian metrics for $|t|$ small enough and $h$ a smooth symmetric $(0,2)$-tensor on $M$. Then there  exist constants $t_0>0$ and $\varepsilon>0$ such that, for $|t|<t_0,$ there exists  a unique 
smooth positive function $\Phi(t)$ on $M$ such that $|\Phi(t) - 1|\leq \varepsilon$  and
\begin{equation}\label{dir}
\left\{
  \begin{array}{rcl}
\displaystyle \frac{4(n-1)}{n-2}\Delta_{g(t)} \Phi(t)- R_{g(t)}\Phi(t)+ c\Phi(t)^{\frac{n+2}{n-2}}&=& 0\quad\mbox{in}\quad M\\
\displaystyle\frac{\partial \Phi(t)}{\partial \nu}+\frac{n-2}{2(n-1)} H_{g(t)}\Phi(t) & = & \displaystyle 0 \quad\mbox{on}\quad\Sigma,
\end{array}
  \right.
 \end{equation}
Moreover, $\hat\Phi=\frac{d}{dt}\big|_{t=0}\Phi$ exists and is the unique smooth function $\hat{\Phi}$ on $M$ satisfying 
\begin{equation}\label{deriv2}
\left\{
  \begin{array}{rcl}
\displaystyle \Delta_{g_0} \hat\Phi-\frac{n-2}{4(n-1)}R'(0)+\frac{c}{n-1}\hat\Phi&=& 0\quad\mbox{in}\quad M\\
\displaystyle\frac{\partial \hat\Phi}{\partial \nu}+\frac{n-2}{2(n-1)}H'(0)&=&0 \quad\mbox{on}\quad\Sigma.
\end{array}
  \right.
 \end{equation}

\end{proposition}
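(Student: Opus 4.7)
The plan is to mirror the argument used to prove Proposition \ref{target} and Corollary \ref{derivative}, with the roles of interior and boundary reversed: the critical nonlinearity now sits in the interior equation, the boundary condition is of pure Neumann type, and the positive first Neumann eigenvalue of $(n-1)\Delta_{g_0}+c$ plays the role previously played by the positive Steklov eigenvalue of $\partial/\partial\nu-c/(n-1)$.

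First I would establish the Neumann analogue of Lemma \ref{help}: produce $\delta_0, \delta > 0$ and a positive function $u$ on $M$ with
\[
(n-1)\Delta_{g_0} u+(c+\delta_0)u=0 \text{ in } M, \qquad \frac{\partial u}{\partial \nu}=\delta \text{ on } \Sigma.
\]
This follows by the same pattern as Lemma \ref{help}: solve $(n-1)\Delta v + (c+\delta_0) v = -(c+\delta_0)$ with $\partial v/\partial \nu = \delta$ by the Fredholm alternative (applicable for small $\delta_0$ by the hypothesis on the first eigenvalue), set $u = v + 1$, and conclude $u > 0$ by the strong maximum principle together with a contradiction on the nodal set $D = \{u < 0\}$ where the Neumann datum $\delta > 0$ makes the associated boundary integral strictly negative while the eigenvalue hypothesis makes the bulk energy positive.

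Next, I would use $\Phi_\pm(t) = 1 \pm tKu$ (with $K > 0$ a large constant fixed below) as sub- and super-solutions for the system \eqref{dir}. Writing
\[
\mathcal L\Phi = \tfrac{4(n-1)}{n-2}\Delta_{g(t)}\Phi - R_{g(t)}\Phi + c\Phi^{(n+2)/(n-2)}, \qquad \mathcal B\Phi = \tfrac{\partial\Phi}{\partial\nu} + \tfrac{n-2}{2(n-1)}H_{g(t)}\Phi,
\]
and Taylor expanding around $t = 0$ using $R_{g_0} = c$, $H_{g_0} = 0$ together with the helper equation, one finds
\[
\mathcal L(1+tKu) = \tfrac{4}{n-2}tK\bigl((n-1)\Delta_{g_0} u+cu\bigr) + O(t) = -\tfrac{4\delta_0 K}{n-2}tu + O(t) + O(K^2 t^2),
\]
and $\mathcal B(1+tKu) = K\delta t + O(t) + O(Kt^2)$, where the $O(t)$ terms collect the linearizations of $R_{g(t)} - c$ and $H_{g(t)}$. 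Choosing $K$ large enough so that $K\delta_0 \min u$ and $K\delta$ dominate the $O(t)/t$ constants, and then $|t|$ small, gives $\mathcal L(1+tKu) \le 0 \le \mathcal L(1-tKu)$ in $M$ and $\mathcal B(1+tKu) \ge 0 \ge \mathcal B(1-tKu)$ on $\Sigma$. The sub/super-solution method for nonlinear oblique-derivative problems (\cite[Theorem~2.3.1]{Sa}) then produces a solution $\Phi(t) \in [1-tKu, 1+tKu]$, yielding existence and $|\Phi(t) - 1| \le \varepsilon$.

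For uniqueness, given two solutions $\Phi_1, \Phi_2$, I subtract and test with $\Phi_1 - \Phi_2$; integration by parts using the Neumann condition produces a quadratic form that, at $t = 0$ and $\Phi_i = 1$, reduces to $\tfrac{4}{n-2}\bigl((n-1)\int |\nabla w|^2 - c \int w^2\bigr)$, bounded below by a positive multiple of $\int w^2$ by the eigenvalue hypothesis; absorbing the perturbations from $|t|, \varepsilon$ small forces $\Phi_1 = \Phi_2$. The derivative $\hat\Phi$ and equation \eqref{deriv2} then follow verbatim from Corollary \ref{derivative}: set $w(t) = (\Phi(t)-1)/t$, derive its PDE, apply Schauder estimates for oblique problems (\cite[Theorem~8.29]{GT}, \cite[Lemma~6.29]{GT}), pass to a subsequential limit, and use the Fredholm alternative to identify the limit as the unique solution of \eqref{deriv2}. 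The main obstacle is the helper $u$: getting the signs of $\delta_0$ and $\delta$ to match the way the critical nonlinearity linearizes into $\tfrac{4}{n-2}((n-1)\Delta+c)$—the exact operator whose positive first eigenvalue is hypothesized—is what makes the whole scheme close.
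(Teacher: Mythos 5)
Your proposal is correct and follows essentially the same scheme as the paper's sketch: a positive helper function from the positive Neumann eigenvalue hypothesis, sub/super-solutions $1\pm tKu$ for the system \eqref{dir}, an energy argument for uniqueness, and $w(t)=(\Phi(t)-1)/t$ with oblique-derivative Schauder estimates for the derivative statement. The only cosmetic deviation is in the helper's boundary condition — you use a constant Neumann datum $\partial u/\partial\nu=\delta$, whereas the paper uses the Robin condition $\partial u/\partial\nu=\delta_0 u$; both ensure $\partial u/\partial\nu>0$ on $\Sigma$, which is the only property the boundary estimate actually needs, so the two variants are interchangeable.
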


 Due to the similarity of the proof of Proposition \ref{target},  we merely sketch the proof of Proposition \ref{target2} whose details we left to the reader. First,  the eigenvalue condition implies the existence of a positive function $u$ on $M$ and constants $\delta_0,\delta>0$ such that
 $$
\left\{
  \begin{array}{rccl}
\displaystyle \Delta u+\frac{c}{n-1}u +\delta u& = & \displaystyle 0 & \mbox{in } M\\
\displaystyle \frac{\partial u}{\partial \nu}-\delta_0 u & = & 0  & \mbox{on }  \Sigma
\end{array}
  \right.
$$

Analogously, we define  the following operators.
\begin{equation}\label{boundp}
(t,v) \mapsto \left\{
  \begin{array}{rcll}

\mathcal Lv & = & \displaystyle \frac{4(n-1)}{n-2}\Delta_{g(t)}v-R_{g(t)}v+cv^{\frac{n+2}{n-2}}, & \mbox{ in}\quad M\\
\mathcal Bv & = & \displaystyle \frac{\partial_{g(t)}(\cdot)}{\partial \nu}+\frac{n-2}{2(n-1)}H_{g(t)}v, & \mbox{ in}\quad \Sigma.
\end{array}
  \right.
 \end{equation}
The next step is to apply the method of sub- and super-solutions as before to  the operator \eqref{boundp} in order to get a solution $\Phi(t)$ of \eqref{dir}. More specifically, we scale $u$ in such way that 
\begin{equation*}
\mathcal L(1+tu)\leq 0\leq \mathcal L(1-tu)\quad \mbox{and}\quad \mathcal B(1 - tu)\leq0\leq \mathcal B(1 +tu),
\end{equation*}
which implies the existence of a solution $\Phi(t)$ such that
$$ 1-tu\leq \Phi(t) \leq 1+tu$$ provided $t > 0$ is small enough. For $t<0$ we proceed in an analogous way.
Finally, the uniqueness of a solution to \eqref{deriv2} follows from standard  estimates as in Corollary \ref{derivative}. 

\medskip

Now we state in the following a variational characterization of critical points of volume and area of the boundary functional.

\begin{theorem}[Theorem \ref{teo1}]\label{generalteo1}
Let $(M,g)$ be a compact Riemannian manifold with non empty boundary $\Sigma$. Let $c\in\mathbb R$ be a fixed constant.  
\begin{enumerate}[(a)]

\item\label{item006}  Suppose  $g\in \mathcal{M}_{0,c}$ is a metric satisfying the property that the first eigenvalue of the problem  $\eqref{stek0}$ is positive, then 
$g$ is a critical point of $
g\mapsto \mbox{Area($\Sigma$,$g$)}
$ in $ \mathcal{M}_{0,c}$
if and only if there exists $V\in C^\infty(M)$ such that
\begin{equation}\label{eqespecial3}
\left\{
  \begin{array}{rccl}
-(\Delta_g V)g+\mbox{Hess}_gV-V\Ric_g& = & 0 & \mbox{in } M\\
\displaystyle\frac{\partial V}{\partial \nu}g-V\Pi_{g} & = & g & \mbox{on }\Sigma.
\end{array}
  \right.
 \end{equation} 
\item Suppose  $g\in \mathcal{M}_{c,0}$ is a metric satisfying the property that the first eigenvalue of $(n-1)\Delta_g + c$ with  Neumann boundary condition is positive, then 
$g$ is a critical point of $
g\mapsto \mbox{Vol(M,g)}
$
on $ \mathcal{M}_{c,0}$
if and only if there exists $V\in C^\infty(M)$ such that
\begin{equation*}\label{eqespecial2}
\left\{
  \begin{array}{rccl}
-(\Delta_g V)g+\mbox{Hess}_gV-V\Ric_g & = & g & \mbox{in } M\\
\displaystyle\frac{\partial V}{\partial \nu}g-V\Pi_{g} & = & 0 &\mbox{on } \Sigma.
\end{array}
  \right.
 \end{equation*} 
\end{enumerate}
\end{theorem}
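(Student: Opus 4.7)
The plan is to exploit the Green-type identity \eqref{green} in both directions, using Proposition \ref{target} (resp.\ Proposition \ref{target2}) and Corollary \ref{derivative} to convert an arbitrary infinitesimal variation into one tangent to $\mathcal{M}_{0,c}$ (resp.\ $\mathcal{M}_{c,0}$). I will only sketch part (a) in detail, as part (b) is a verbatim adaptation.

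For the sufficiency direction in (a), suppose $V$ satisfies the system in \eqref{eqespecial3} and let $h\in T_g\mathcal{M}_{0,c}$, so $\delta R_g h=0$ and $\delta H_g h=0$. Plugging into \eqref{green} yields
\begin{equation*}
0 \;=\; \langle A^*_g V,h\rangle_{L^2(M)} + \langle B^*_g V,h\rangle_{L^2(\Sigma)} \;=\; \int_\Sigma \mbox{tr}_g(h|_\Sigma)\,da \;=\; 2\,\delta\mbox{Area}_g\cdot h,
\end{equation*}
so $g$ is a critical point. For necessity, I first construct a candidate $V$ from the \emph{traced} form of \eqref{eqespecial3}. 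Tracing the two tensor equations with respect to $g$ and $g|_\Sigma$ respectively, and using $R_g=0$, $H_g=c$, reduces them to the scalar boundary value problem
\begin{equation*}
\Delta_g V=0 \ \mbox{in } M,\qquad \frac{\partial V}{\partial\nu} - \frac{c}{n-1}V = 1 \ \mbox{on }\Sigma,
\end{equation*}
which is uniquely solvable by the Fredholm alternative under the assumed positivity of the first Steklov eigenvalue of $\partial_\nu - c/(n-1)$.

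To upgrade this scalar solution to the full tensor system, let $h$ be any smooth symmetric $(0,2)$-tensor. Proposition \ref{target} applied to $g(t)=g+th$ furnishes a conformal factor $\Phi(t)$ with $\Phi(0)=1$ such that $\tilde g(t):=\Phi(t)^{4/(n-2)}g(t)\in \mathcal{M}_{0,c}$. Setting $\hat\Phi:=\Phi'(0)$, Corollary \ref{derivative} shows $\hat\Phi$ solves \eqref{deriv}, and $\tilde h := \tfrac{4}{n-2}\hat\Phi\, g + h$ is tangent to $\mathcal{M}_{0,c}$ at $g$. Criticality of $g$ therefore yields
\begin{equation*}
\int_\Sigma \mbox{tr}_g(h|_\Sigma)\,da \;=\; -\frac{4(n-1)}{n-2}\int_\Sigma \hat\Phi\,da.
\end{equation*}
Multiplying the bulk equation of \eqref{deriv} by $V$, integrating over $M$, and using $\Delta_g V=0$ together with the boundary conditions for both $V$ and $\hat\Phi$, a routine simplification produces
\begin{equation*}
\int_M V\,\delta R_g h\,dv + 2\int_\Sigma V\,\delta H_g h\,da \;=\; -\frac{4(n-1)}{n-2}\int_\Sigma \hat\Phi\,da \;=\; \int_\Sigma \mbox{tr}_g(h|_\Sigma)\,da.
\end{equation*}
The left-hand side equals $\int_M \langle A^*_g V,h\rangle\,dv + \int_\Sigma \langle B^*_g V,h\rangle\,da$ by \eqref{green}, so
\begin{equation*}
\int_M \langle A^*_g V, h\rangle\,dv + \int_\Sigma \langle B^*_g V, h\rangle\,da \;=\; \int_\Sigma \langle g, h\rangle\,da
\end{equation*}
for every such $h$. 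Testing with $h$ compactly supported in the interior forces $A^*_g V=0$ on $M$; then varying $h|_\Sigma$ arbitrarily over tangential symmetric $2$-tensors forces $B^*_g V = g$ on $\Sigma$, which is \eqref{eqespecial3}.

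Part (b) follows the same blueprint, trading bulk for boundary and replacing Proposition \ref{target} by Proposition \ref{target2}: the traced system becomes the Neumann problem $(n-1)\Delta_g V + cV = -n$ in $M$, $\partial_\nu V = 0$ on $\Sigma$, uniquely solvable under the assumed eigenvalue positivity, and the integration-by-parts argument with the companion linearized problem \eqref{deriv2} turns the criticality of volume into the full tensor identity. The main obstacle in both parts is exactly this passage from the scalar traced equation to the full tensor system; this is precisely what the conformal deformation lemmas of Section \ref{vari} enable, by producing, for each prescribed $h$, a tangent variation $\tilde h$ to the constrained space, and thereby enough surjectivity to activate the Green identity \eqref{green} against arbitrary $h$.
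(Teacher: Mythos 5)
Your proof is correct and follows essentially the same route as the paper: construct $V$ from the traced scalar problem via the Fredholm alternative, use Proposition~\ref{target} and Corollary~\ref{derivative} to project an arbitrary deformation $h$ onto a tangent direction $\tfrac{4}{n-2}\hat\Phi g+h$ in $\mathcal{M}_{0,c}$, exploit criticality together with the Green identity~\eqref{green} and integration by parts to obtain $\int_M\langle A_g^*V,h\rangle\,dv+\int_\Sigma\langle B_g^*V-g,h\rangle\,da=0$ for all $h$, and conclude. The only deviations are presentational — you state the easy implication first and make explicit the two-step localization (interior-supported $h$, then arbitrary $h|_\Sigma$) that the paper leaves tacit.
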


\begin{proof}

By Proposition \ref{subman} there is a neighborhood $U$ of $g$ on the space of smooth metric such that $U\cap\mathcal M_{0,c}$ is a submanifold. Suppose that $g$ is a critical point of the area functional  $Area(\Sigma,\cdot)$ in  $\mathcal{M}_{0,c}$. Since the first eigenvalue of \eqref{stek0} is positive,  the Fredholm alternative (cf. \cite{Lie,Sa}) implies the existence of a unique  smooth function $V$ on $M$ satisfying:
\begin{equation}\label{fred2}
\left\{
  \begin{array}{rccl}
\Delta V&=& 0 & \mbox{in } M\\
\displaystyle\frac{\partial V}{\partial \nu}-\frac{c}{n-1}V&=& 1 & \mbox{on }\Sigma.
\end{array}
  \right.
 \end{equation} 

We will prove that $V$ satisfies \eqref{eqespecial3}. Let $h$ be a smooth symmetric (0,2)-tensor. For small $|t|$ we have that $g+th$ is a smooth metric in $M$.
By Proposition \ref{target}
there  exists $t_0>0$ and $\varepsilon>0$ such that for any $t\in(-t_0,t_0)$, there exists a unique positive solution $\Phi(t)$ of \eqref{neuscal} which is differentiable at $t = 0$ with $\Phi(0) = 1$. Thus, the metric $g(t)=\Phi(t)^{4/(n-2)}(g+th)$ is a $C^1$ curve in $\mathcal{M}_{0,c}$ such that $g(0)=g$ and
$$
\frac{d}{dt}\Big|_{t=0}\left(\Phi(t)^{4/(n-2)}g(t)\right)=\frac4{n-2} \hat\Phi(t)g+h,
$$
where $\hat\Phi:=\left.\dfrac{d}{dt}\right|_{t=0}\Phi$. This implies that
 \begin{equation}\label{inttrace}
  \int_{\Sigma}\left(\frac{4(n-1)}{n-2}\hat\Phi+\tr_{g}(h|_\Sigma)\right)da=0,
\end{equation}
 since $g$ is a critical point of the functional $g\mapsto\mbox{Area}(\Sigma,g)$ in $ \mathcal{M}_{0,c}.$ Furthermore, $\hat\Phi$ satisfies \eqref{deriv}. On the other hand, using  integration by parts, \eqref{fred2} and Corollary \ref{derivative} we obtain that
 $$\begin{array}{rcl}
  \displaystyle\frac{4(n-1)}{n-2}\int_{\Sigma} \hat\Phi  da & = & \displaystyle  \frac{4(n-1)}{n-2}\int_{\Sigma}  \Big(\frac{\partial V}{\partial \nu}\hat\Phi-\frac{c}{n-1}V\hat\Phi\Big) da \\
  \\
  & = & \displaystyle \frac{4(n-1)}{n-2}\int_{\Sigma}  \Big(\frac{\partial \hat\Phi}{\partial \nu}V-\frac{c}{n-1}V\hat\Phi\Big) da\\
  \\
  & & \displaystyle -\frac{4(n-1)}{n-2}\int_{M}  V\Delta \hat\Phi dv\\
  \\
  &  = & \displaystyle -2 \int_{\Sigma}V \delta H_g h da- \int_M V\delta R_ghdv.
 \end{array}$$

By \eqref{green} and \eqref{inttrace}, we get that 
$$
 \int_{ M}\langle h, -(\Delta_g V)g+\mbox{Hess}_gV-V\Ric_g  \rangle + \int_{\Sigma}\left\langle \frac{\partial V}{\partial \nu}g-V\Pi_{g}-g,h\right\rangle =0.
$$
Since $h$ is any $(0,2)$-tensor, we conclude that $V$ satisfies \eqref{eqespecial3}.

Now suppose that $V$ satisfies  equation \eqref{eqespecial3}. Let $h$ be a smooth symmetric $(0,2)$-tensor in the tangent space of $g$ in $\mathcal M_{0,c}$. This implies that $\delta R_gh=0$ and $\delta H_{g}h=0$. Therefore, by \eqref{green} we obtain
$$\begin{array}{rcl}
  0 & =  & \displaystyle 2 \int_{\Sigma}V \delta H_g h da+ \int_M V\delta R_ghdv  \\
  \\
  & = & \displaystyle \int_M\langle A_g^*V,h \rangle dv+\int_\Sigma\langle B_g^*V,h\rangle da\\
  \\
  & = & \displaystyle\int_\Sigma\tr_g (h|_\Sigma) da.
\end{array}$$
Therefore $g$ is a critical point to the area functional $g\mapsto\mbox{Area}(\Sigma,g)$ in $ \mathcal{M}_{0,c}.$ Hence item \eqref{item006} follows. 

 Assume now that  $g$ is a critical point of the volume functional on  $\mathcal{M}_{0,c}$. Since the first eigenvalue of $(n-1)\Delta_g+c$ with Neumann boundary condition is positive, the Fredholm alternative gives the existence of a smooth function $V$ such that
\begin{equation}\label{Ne}
\left\{
  \begin{array}{rccl}
\displaystyle\Delta V+\frac{c}{n-1}V & = & \displaystyle-\frac{n}{n-1} &\mbox{in } M\\
\vspace{-0,2cm}\\
\displaystyle\frac{\partial V}{\partial \nu} & = & 0  & \mbox{on }\Sigma.
\end{array}
  \right.
 \end{equation} 
 
 Let $h$ be a smooth symmetric (0,2)-tensor. 
By Proposition \ref{target2}, for each $t$ sufficiently small, it is possible to find a smooth positive $\Phi(t)$ on $M$ with $\Phi(0)=1$ and $\Phi(t)^{4/(n-2)}(g+th)\in \mathcal{M}_{c,0},$ where $\Phi$ is differentiable at $t=0$ and $\hat\Phi:=\left.\dfrac{d}{dt}\right|_{t=0}\Phi$ satisfies \eqref{deriv2}. Thus 
$$
\left.\frac{d}{dt}\right|_{t=0}\left(\Phi(t)^{4/(n-2)}g(t)\right)=\frac{4}{n-2} \hat\Phi g+h.
$$
Since $g$ is a critical point of the volume functional on $ \mathcal{M}_{c,0},$ we have  
 \begin{equation}\label{eq009}
  \int_{M}\left(\frac{4n}{n-2}\hat\Phi+\tr_{g}h\right)dv=0.
\end{equation}
 
  Using integration by parts, Proposition \ref{target2}  and  (\ref{Ne}) we have
$$\begin{array}{rcl}
     \displaystyle\frac{4n}{n-2}\int_{M} \hat\Phi  da & = & \displaystyle-\frac{4(n-1)}{n-2}\int_{M} \hat\Phi\left(\Delta V+\frac{c}{n-1}V\right)\Phi dv \\  
     \\
     & = & \displaystyle- \frac{4(n-1)}{n-2} \int_{M}V\left(\Delta\hat\Phi+\frac{c}{n-1}  \hat\Phi\right) dv 
     \\ 
     \\
     & & \displaystyle+\frac{4(n-1)}{n-2}\int_{\Sigma}V\frac{\partial \hat \Phi}{\partial\nu} da \\
     \\
     & = &\displaystyle -2 \int_{\Sigma}V \delta H_g h da- \int_M V\delta R_ghdv.
\end{array}$$

By \eqref{green} and \eqref{eq009}, we get that 
$$
 \int_{ M}\langle h, -(\Delta_g V)g+\mbox{Hess}_gV-V\Ric_g  -g\rangle + \int_{\Sigma}\left\langle \frac{\partial V}{\partial \nu}g-V\Pi_{g},h\right\rangle =0.
$$
Since $h$ is any $(0,2)$-tensor, we conclude that $V$ satisfies \eqref{eqespecial3}.

The conversely follows as in item \eqref{item006}. Therefore the theorem follows.
\end{proof}

\section{The  modified Kazdan-Warner-Kobayashi problem for manifolds with boundary}\label{prescsection}

In this section we begin by reviewing the notions needed in the sequel. Also we  collect  few technical result for the purpose of  proving  prescribing curvature results in Section \ref{Presc}.

\subsection{Yamabe invariant}\label{yaminva}
Let $M$ be a smooth compact manifold of dimension $n\geq 3$ with non empty boundary $\Sigma$. The total scalar curvature plus total mean curvature functional $F$ is defined in \eqref{functional}, which is defined on the space of Riemannian metrics in $M$. It is well known that $F(u^{\frac{4}{n-2}}g_0)=E(u)$, where
\begin{equation}\label{eq018}
    E(u)=\int_M|\nabla u|^2dv+\frac{n-2}{4(n-1)}\int_MR_{g_0}u^2dv+\frac{n-2}{2}\int_{\Sigma}H_{g_0}u^2da.
\end{equation}

Using this, given a conformal class $C=[g_0]$ of the Riemannian metric $g_0$, we obtain that the Yamabe constant \eqref{eq014} of $(M,C)$ can be written as
$$
    Y_\lambda(M,C)
    =\inf_{u\in H^1(M,g_0)\backslash\{0\}} I_\lambda(u),
$$
for $\lambda\in\{0,1\}$, where
$$I_\lambda(u)=\frac{E(u)}{\lambda\left(\int_Mu^{\frac{2n}{n-2}}\right)^{\frac{n-2}{n}}+(1-\lambda)\left(\int_{\Sigma}u^{\frac{2(n-1)}{n-2}}\right)^{\frac{n-2}{n-1}}}.$$

It is important to recall that if  $ Y_0(M,C)>0$ (resp. $ Y_0(M,C)=0$), then there exists a conformal metric with
zero  scalar curvature in $M$ and positive (resp. zero) mean curvature on $\Sigma.$

In the following, we recall well-known facts giving  conditions in which the infimum in \eqref{eq014} is achieved. This infimum is known as a Yamabe metric, that is, a metric of constant scalar curvature and minimal boundary, in the case $\lambda=1$, or scalar flat metric with constant mean curvature, in the case $\lambda=0$. In fact, the next theorem summarizes the recent contributions to the problem started by J. F. Escobar \cite{E2,E1} and studied also by S. M. Almaraz \cite{almaraz}, S. Brendle and S.-Y. S. Chen \cite{Brendle-Chen},  F. C. Marques \cite{Marques2,Marques1} and M. Mayer and C. B. Ndiaye \cite{Mayer-Ndiaye}.

\begin{theorem}\label{fact 1}
Let $M$ be a smooth compact manifold with non empty boun-dary and dimension $n\geq 3$. If $C$ is a conformal class in $M$ such that
\begin{enumerate}[(a)]
\item\label{item004} $-\infty< Y_1(M,C)<Y_1(\mathbb{S}^n_+,\partial \mathbb{S}^n_+),$ then there  exists a metric $g$ on $C$  with  zero mean curvature  and constant scalar curvature 
$$
R_{{g}}=\frac{4(n-1)}{n-2} Y_1(M,C) \mbox{Vol}(M, {g})^{-2 / n}.$$
\item  $-\infty< Y_0(M,C)<Y_0(\mathbb{B}^n_+,\partial \mathbb{B}^n_+),$ then there  exists a scalar flat metric $\bar g$ on $C$  with  constant mean curvature   equal to 
$$
H_{\bar{g}}=\frac{2}{n-2}  Y_0(M,C) \mbox{Area}(M, \bar{g})^{-\frac{1}{n-1}}.
$$
\end{enumerate}
\end{theorem}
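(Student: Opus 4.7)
The plan is to prove both parts by the direct method in the calculus of variations applied to the Yamabe-type functional $I_\lambda$, following the approach of J.~F.~Escobar to the boundary Yamabe problem. In each case, the Yamabe constant is the infimum of a scale-invariant functional whose Euler--Lagrange equation encodes precisely the desired curvature condition, and the key analytical obstacle is the failure of compactness of the critical Sobolev embedding.

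For part (a), I would fix a background metric $g_0 \in C$ and take a minimizing sequence $u_k \in H^1(M,g_0)$ for $I_1$ normalized so that $\int_M u_k^{2n/(n-2)} dv_{g_0} = 1$. The definition of $Y_1(M,C)$ together with the $L^2$-control coming from this normalization and \eqref{eq018} implies a uniform $H^1$ bound, so after passing to a subsequence $u_k \rightharpoonup u$ weakly in $H^1$ and strongly in every subcritical $L^p$. The delicate step is to exclude bubble formation: loss of mass can occur either at an interior concentration point (modeled on $\mathbb{R}^n$, whose optimal Sobolev constant is equivalent via the reflection argument to $Y_1(\mathbb{S}^n_+,\partial\mathbb{S}^n_+)$) or at a boundary point (modeled on the half-space). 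A concentration-compactness argument shows that either scenario forces $Y_1(M,C) \geq Y_1(\mathbb{S}^n_+,\partial\mathbb{S}^n_+)$, contradicting the hypothesis; hence $u$ attains the infimum. Standard elliptic regularity for the Yamabe equation with Neumann boundary condition, together with the strong maximum principle, yields $u \in C^\infty(M)$ with $u > 0$, so that $g = u^{4/(n-2)} g_0$ is a smooth metric in $C$. Its Euler--Lagrange equation is exactly the statement that $H_g = 0$ and $R_g$ equals the constant in \eqref{item004}.

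For part (b), the proof is structurally identical but with the critical Sobolev embedding replaced by the trace embedding $H^1(M) \hookrightarrow L^{2(n-1)/(n-2)}(\Sigma)$. Minimizing $I_0$ with the normalization $\int_\Sigma u_k^{2(n-1)/(n-2)} da_{g_0} = 1$ produces an $H^1$-bounded sequence whose weak limit is the candidate minimizer. Now the only possible concentration is on the boundary, modeled on the half-ball $(\mathbb{B}^n_+,\partial\mathbb{B}^n_+)$ with threshold $Y_0(\mathbb{B}^n_+,\partial\mathbb{B}^n_+)$; the assumed strict inequality again rules it out. The Euler--Lagrange equation of the trace functional is the boundary Yamabe equation of Escobar (scalar flat in $M$ with Robin-type condition on $\Sigma$), and the conformal change $\bar g = u^{4/(n-2)} g_0$ identifies the asserted value of $H_{\bar g}$ and $R_{\bar g}=0$.

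The main obstacle in both cases, and really the entire content of the cited works, is verifying the strict inequality with the model constant. This is done by an explicit test function computation in conformal normal coordinates centered at a well-chosen interior or boundary point; the construction becomes delicate in low dimensions and on locally conformally flat manifolds, where subleading mass-type terms dominate and one must invoke a positive mass theorem adapted to the setting (with or without boundary, with or without umbilic boundary). This case analysis is precisely what is split across the references of Almaraz, Brendle--Chen, Marques, and Mayer--Ndiaye, and I would simply invoke them to conclude.
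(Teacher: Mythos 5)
Your outline is the standard variational approach (minimizing sequence, $H^1$ bound, concentration-compactness to rule out bubbling via the strict inequality with the model constant, elliptic regularity, Euler--Lagrange identification), and it is essentially correct in substance. The paper itself does not present a proof of this theorem: it is explicitly framed as a \emph{summary} of the resolution of the boundary Yamabe problem, and the proof consists of citing Escobar, Almaraz, Brendle--Chen, Marques, and Mayer--Ndiaye. So your proposal is aligned with the paper in spirit, with the difference that you actually sketch the mechanism of the direct method rather than outsourcing it entirely.

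There is, however, a conceptual slip in your final paragraph that is worth flagging. You write that ``the main obstacle\dots is verifying the strict inequality with the model constant'' and that you ``would simply invoke'' the cited works to establish it. But in the theorem as stated, the strict inequality $Y_\lambda(M,C) < Y_\lambda(\text{model})$ is a \emph{hypothesis}, not a conclusion. What the cited papers of Almaraz, Brendle--Chen, Marques, and Mayer--Ndiaye contribute is precisely the hard test-function and positive-mass arguments needed to show that this strict inequality holds for essentially all manifolds other than the model, thereby completing the global Yamabe problem on manifolds with boundary. That is not what this theorem asserts. For this theorem you need only the Aubin/Escobar-type existence criterion (strict inequality $\Rightarrow$ compactness of the minimizing sequence) together with regularity and the maximum principle, which is the content of Escobar's original papers and of your first two paragraphs. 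Your closing paragraph, while accurately describing the surrounding literature, does not actually contribute a missing step to the proof of the statement as written, and a reader might be misled into thinking more is required than is.

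One small technical remark: the assertion that the normalization $\|u_k\|_{L^{2n/(n-2)}}=1$ plus $E(u_k)\to Y_1(M,C)$ gives a uniform $H^1$ bound needs a little care when $Y_1(M,C)\le 0$; the boundary term in $E$ must be absorbed into the gradient term via an $\varepsilon$-trace inequality $\int_\Sigma u^2 \le \varepsilon\int_M|\nabla u|^2 + C_\varepsilon\int_M u^2$ before concluding. This is standard, but you glossed over it, and it is worth making explicit if the argument is to be self-contained.
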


Another useful fact in this paper is the following lemma that follows from continuity property  (see Appendix for a proof) of the Yamabe constant.

\begin{lemma}\label{fact 3}
Let $M$ be a smooth compact manifold with non empty boundary and dimension $n\geq 3$. For $\lambda\in\{0,1\}$ fixed, given a constant $c<\sigma_\lambda(M)$, there exist a conformal class $C$ such that $Y_\lambda(M,C)=c$.
\end{lemma}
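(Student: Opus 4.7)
The plan is to realize $c$ as an intermediate value of $C\mapsto Y_\lambda(M,C)$ along a continuous path of conformal classes. By definition of $\sigma_\lambda(M)=\sup_C Y_\lambda(M,C)$ and the hypothesis $c<\sigma_\lambda(M)$, there is a conformal class $C_+$ with $Y_\lambda(M,C_+)>c$; fix a representative $g_+\in C_+$.

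Next I would produce a conformal class $C_-$ with $Y_\lambda(M,C_-)<c$. For $\lambda=1$ I take as representative a metric $g_-$ with constant scalar curvature $R_{g_-}=k<0$ and minimal boundary (such metrics exist on any smooth compact manifold with non empty boundary in dimension $n\geq 3$ by standard boundary Yamabe-problem arguments, cf.\ \cite{E2,E1}). Using $u\equiv 1$ as a test function in \eqref{eq018} and \eqref{eq014} yields
\[
Y_1(M,[g_-])\le I_1(1)=\frac{n-2}{4(n-1)}\,k\,\mbox{Vol}(M,g_-)^{2/n},
\]
which can be made smaller than $c$ by choosing $|k|$ sufficiently large while keeping the volume bounded. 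The case $\lambda=0$ is analogous: use a scalar flat representative with very negative constant mean curvature, so that the same test function gives
\[
Y_0(M,[g_-])\le \frac{n-2}{2}H_{g_-}\mbox{Area}(\Sigma,g_-)^{1/(n-1)},
\]
which is arbitrarily negative.

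Once $g_-$ and $g_+$ are in hand, I would consider the convex combination $g_t=(1-t)g_-+tg_+$ for $t\in[0,1]$, which is a smooth one-parameter family of Riemannian metrics on $M$ (positive-definiteness is preserved by convex combinations). By the continuity property of the Yamabe constant along such families established in the Appendix, the function $t\mapsto Y_\lambda(M,[g_t])$ is continuous on $[0,1]$. Since $Y_\lambda(M,[g_0])<c<Y_\lambda(M,[g_1])$, the Intermediate Value Theorem produces $t^*\in(0,1)$ with $Y_\lambda(M,[g_{t^*}])=c$, so $C=[g_{t^*}]$ is the desired conformal class.

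The main obstacle is not the interpolation step, which is routine once continuity is granted, but rather the construction of $C_-$ and the continuity statement itself. The former requires the existence, on any smooth compact manifold with boundary, of metrics whose Yamabe constant is arbitrarily negative, and this is what the $u\equiv 1$ test function together with prescribing-curvature results provides. The latter, continuity of $[g]\mapsto Y_\lambda(M,[g])$ under smooth perturbations of $g$, is precisely the technical point deferred to the Appendix and is the only non-trivial ingredient of the argument.
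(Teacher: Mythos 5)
Your proof has the same high-level architecture as the paper's intended argument (produce a conformal class with Yamabe constant below $c$, one above $c$, join them by a path of metrics, invoke the continuity proved in the Appendix, and apply the Intermediate Value Theorem), but the construction of $C_-$ is circular and does not establish the key fact it is supposed to supply.

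The problem is this: when $g_-$ has constant scalar curvature $k<0$ and minimal boundary, $g_-$ is the Yamabe minimizer in its conformal class (uniqueness holds in the negative case), and by Theorem \ref{fact 1} one has \emph{exactly}
\[
\frac{n-2}{4(n-1)}\,k\,\mbox{Vol}(M,g_-)^{2/n}=Y_1(M,[g_-]).
\]
So the quantity you propose to drive below $c$ ``by choosing $|k|$ sufficiently large while keeping the volume bounded'' is not something you can manipulate independently of the conformal class: it is identically $Y_1(M,[g_-])$. Asserting that it can be made arbitrarily negative is therefore the same as asserting that $\inf_C Y_1(M,C)=-\infty$, which is precisely what you are trying to prove at this step. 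The $\lambda=0$ case has the same circularity via $\frac{n-2}{2}H_{g_-}\mbox{Area}(\Sigma,g_-)^{1/(n-1)}=Y_0(M,[g_-])$. The boundary Yamabe problem cannot give you a metric whose constant curvature is prescribed \emph{and} whose volume (resp.\ area) is simultaneously under control, because the relevant product is a conformal invariant.

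The fix is to abandon constancy and instead invoke the genuinely separate and standard fact that $\inf_C Y_\lambda(M,C)=-\infty$ on any smooth compact manifold with nonempty boundary; this is established by non-conformal modifications of a background metric (e.g.\ stretching a long thin neck or introducing a region of very negative curvature, keeping volume or boundary area bounded), and the test function $u\equiv 1$ then furnishes the required upper bound for the corresponding conformal class. With that fact cited correctly, the rest of your argument --- the convex path $g_t=(1-t)g_-+tg_+$, the resulting $C^\infty$ (hence $C^0$) convergence of $g_t$, $R_{g_t}$, $H_{g_t}$, the Appendix continuity of $t\mapsto Y_\lambda(M,[g_t])$, and the Intermediate Value Theorem --- goes through as you wrote it and matches the paper's approach.
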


We say that a metric $g$ realizes the Yamabe invariant if $ Y_\lambda(M,C)=\sigma_\lambda(M)$, where $C$ is the conformal class of $g$.
Using variational  arguments we can prove that a metric $g$ with unit volume (resp. unit area of the boun-dary)  which realizes the Yamabe invariant is Einstein with totally geodesic boundary (resp. scalar flat with umbilical boundary). 

\begin{proposition}\label{Einstein}
Suppose that $\sigma_1(M)$ is negative. Then any  metric $g$ with $\mbox{Vol}(M,g)=1$ which realizes $\sigma_1(M)$ is Einstein with totally geodesic boundary.
\end{proposition}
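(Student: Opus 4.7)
The plan is to exhibit a smooth one-parameter family of unit-volume Yamabe metrics---realized by the Yamabe representatives of nearby conformal classes---along which the scalar curvature is maximized at $g$, and to read off the Einstein and totally geodesic boundary conditions from the vanishing of the first derivative. To start, since $Y_1(M, [g]) = \sigma_1(M) < 0 < Y_1(\mathbb{S}^n_+, \partial \mathbb{S}^n_+)$, Theorem~\ref{fact 1}(a) produces a Yamabe representative of $[g]$; the Yamabe metric in a negative conformal class is unique up to homothety, so the unit-volume normalization forces $g$ itself to be this representative. Hence $R_g = c_0 := \frac{4(n-1)}{n-2}\sigma_1(M) < 0$ is constant and $H_g = 0$.

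For an arbitrary smooth symmetric $(0,2)$-tensor $h$ on $M$, I would set $g(t) = g + th$ and, by the implicit function theorem, solve the coupled system ``$R_{\phi(t)^{4/(n-2)} g(t)} = c(t)$ constant, $H_{\phi(t)^{4/(n-2)} g(t)} = 0$, $\mathrm{Vol}(M, \phi(t)^{4/(n-2)} g(t)) = 1$'' for a smooth path $(\phi(t), c(t))$ with $\phi(0) = 1$ and $c(0) = c_0$. The $(\phi, c)$-linearization at the base point, simplified by $R_g = c_0$ and $H_g = 0$, reduces to the Neumann boundary-value problem for $\Delta_g + \frac{c_0}{n-1}$ coupled to a volume constraint; because $c_0 < 0$, this Schr\"odinger operator has strictly negative spectrum and an elementary computation shows the linearization is an isomorphism, so the implicit function theorem applies. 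This yields a smooth curve $\tilde g(t) = \phi(t)^{4/(n-2)}(g + th)$ of unit-volume Yamabe representatives of $[g(t)]$ with constant scalar curvature $c(t)$.

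The payoff is that $F(\tilde g(t)) = \tfrac{n-2}{4(n-1)} c(t) = Y_1(M, [g(t)]) \leq \sigma_1(M)$, with equality at $t = 0$, so $c'(0) = 0$ for every choice of $h$. Writing $\tilde g'(0) = \tfrac{4}{n-2}\psi g + h$ with $\psi := \phi'(0)$ and differentiating $R_{\tilde g(t)} = c(t)$ gives $\delta R_g(\tilde g'(0)) = c'(0)$. I would then integrate this identity over $M$ and substitute three ingredients: the linearized Neumann condition $\partial \psi/\partial \nu = -\tfrac{n-2}{2(n-1)} \delta H_g(h)$, the first-variation formula for $H_g$ from the excerpt (whose tangential-divergence term vanishes after integration over the closed manifold $\Sigma$), and the linearized volume constraint $\int_M \psi \, dv = -\tfrac{n-2}{4n} \int_M \tr_g h \, dv$. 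After these substitutions the $\partial(\tr_g h)/\partial\nu$ and $(\mathrm{div}\, h)(\nu)$ boundary contributions cancel, and what remains collapses to
\[
c'(0) = -\int_M \Bigl\langle h, \Ric_g - \tfrac{c_0}{n} g \Bigr\rangle \, dv - \int_\Sigma \langle h, \Pi_g\rangle \, da.
\]
Letting $h$ range first among tensors compactly supported in $M \setminus \Sigma$ and then among tensors on $\Sigma$ will force $\Ric_g = \tfrac{c_0}{n} g$ in $M$ (so $g$ is Einstein) and $\Pi_g = 0$ on $\Sigma$ (so $\partial M$ is totally geodesic).

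The hard part will be the implicit function theorem step: I must choose Banach spaces in which the Yamabe-plus-normalization operator is smooth and the linearization is a Fredholm isomorphism. This is precisely where the hypothesis $\sigma_1(M) < 0$ is essential, guaranteeing that $\Delta_g + \frac{c_0}{n-1}$ with Neumann data is invertible. The subsequent derivation of the explicit formula for $c'(0)$ is routine first-variation bookkeeping, made tractable by the simplifications $R_g$ constant and $H_g = 0$ at the base point.
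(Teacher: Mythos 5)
Your proof is correct and follows essentially the same route as the paper: both view $\sigma_1(M)$ as the maximum of $r \mapsto Y_1(M, [g + rh])$, produce the one-parameter family of unit-volume Yamabe representatives of the perturbed conformal classes (the paper invokes uniqueness of the negative Yamabe metric, you make the implicit function theorem step explicit), and read off the Einstein and totally geodesic conditions from the vanishing first variation. The only cosmetic difference is bookkeeping: the paper restricts to trace-free $h$ and appeals to Lagrange multipliers, whereas you keep $h$ arbitrary and substitute the linearized volume constraint to arrive at $\int_M \langle h, \Ric_g - \tfrac{c_0}{n}g\rangle\,dv + \int_\Sigma \langle h, \Pi_g\rangle\,da = 0$ directly.
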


\begin{proof}
Suppose that the metric $g$ has unit volume and realizes the Ya-mabe invariant. Consider a family of Riemannian metrics $\{g(r)\}$, with $r\in(-\varepsilon,\varepsilon)$, given by $g(r)=g+rh$, for $\varepsilon>0$ small enough. By the definition of the Yamabe invariant \eqref{eq015}, we have $Y_1(M,C_r)\leq\sigma_1(M)<0$ for all $r\in(-\epsilon,\epsilon)$, where $C_r=[g(r)]$.
From the resolution of the Yamabe problem on manifold with boundary there
 exists a unique positive function  $u_r>0$ such that  $\tilde{g}(r)=u_r^{\frac{4}{n-3}}g(r)$ has constant scalar curvature equal
to $Y_1(M,C_r)<0$ and zero mean curvature on the boundary for all $r\in(-\varepsilon,\varepsilon)$. This implies that
$\left.\frac{\partial}{\partial r}\right|_{t=0}Y_1(M,C_r)=0$, since $\sigma_1(M)$ is maximum to the function
$r\mapsto Y_1(M,C_r)$.

Using the variation of the scalar curvature and the mean curvature, see Section \ref{preli}, a standard computation (see \cite{A}) gives

\begin{eqnarray*}\nonumber
0=\left.\frac{\partial}{\partial r}\right|_{r=0}Y_1(M,C_r)&=&-\int_M\left\langle{\rm Ric}_g-\frac{R_g}{2}g,h\right\rangle\,dv\\
& &-\int_{\Sigma}\left\langle\Pi_g-H_g,h\right\rangle\,da,
\end{eqnarray*}
for any trace-free symmetric (0,2)-tensor $h.$ 
By  the method of Lagrange Multipliers to the above equation, there exists $c\in\mathbb R$ such that  $\Pi_g=H_gg$ and 
${\rm Ric}_g-\frac{R_g}{2}g=\frac{c }{2}g.
$  Thus, one may easily check that $M$ is an Einstein metric with
totally geodesic boundary.
\end{proof}

In a similar way we have (which we do not prove due to the similarity of the arguments). 

\begin{proposition}\label{Umbilical}
Suppose that $\sigma_0(M)$ is negative. Then any  metric $g$ with $\mbox{Area}(M,g)=1$ which realizes $\sigma_0(M)$ is Ricci flat with umbilical boundary.
\end{proposition}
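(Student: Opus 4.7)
The strategy is to mirror the proof of Proposition \ref{Einstein}, interchanging the roles of volume and boundary area. First, I would reduce to the case where $g$ is a Yamabe minimizer in its own conformal class. Since $\sigma_0(M)=Y_0(M,[g])<0<Y_0(\mathbb{B}^n_+,\partial\mathbb{B}^n_+)$, Theorem \ref{fact 1}(b) yields a positive conformal factor making $g$ scalar flat with constant mean curvature $H_g=\frac{2}{n-2}\sigma_0(M)<0$, and rescaling preserves both the Yamabe-realizing property and the unit boundary area. Thus I may assume $R_g=0$, $H_g$ equals the stated negative constant, and $\mbox{Area}(\Sigma,g)=1$.

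Next, fix a trace-free symmetric $(0,2)$-tensor $h$ on $M$ and set $g(r)=g+rh$ for small $|r|$. By the definition of $\sigma_0(M)$ one has $Y_0(M,[g(r)])\leq\sigma_0(M)$ with equality at $r=0$, and the bound $Y_0(M,[g(r)])<Y_0(\mathbb{B}^n_+,\partial\mathbb{B}^n_+)$ again lets Theorem \ref{fact 1}(b) produce a unique $u_r>0$ with $\tilde g(r)=u_r^{4/(n-2)}g(r)$ scalar flat, of constant mean curvature $\frac{2}{n-2}Y_0(M,[g(r)])$, and of unit boundary area; an implicit function argument modelled on Proposition \ref{target2} shows that $r\mapsto u_r$ is differentiable at $r=0$ with $u_0\equiv 1$. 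Since $r=0$ maximizes $r\mapsto Y_0(M,[g(r)])$, the first derivative vanishes there.

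Now compute this derivative. Write $Y_0(M,[g(r)])=F(\tilde g(r))/N_0(\tilde g(r))$ and differentiate at $r=0$. The component of the variation along the conformal direction contributes nothing (being a critical direction for the Yamabe functional on $[g]$), so only the trace-free part of $h$ survives. Applying the first variation formulas for $R_g$ and $H_g$ from Section \ref{preli}, invoking Green's identity \eqref{green} with $V\equiv 1$, and using $R_g=0$, $H_g$ constant, and $\mbox{tr}_g h=0$, one obtains, up to a positive multiplicative constant,
\begin{equation*}
0=-\int_M\langle\Ric_g,h\rangle\,dv-\int_{\Sigma}\Big\langle\Pi_g-\tfrac{H_g}{n-1}g,\,h|_\Sigma\Big\rangle\,da.
\end{equation*}
Since $h$ is arbitrary among trace-free tensors, Lagrange multipliers yield $\Ric_g=\alpha g$ in $M$ and $\Pi_g=\beta g$ on $\Sigma$ for some scalars $\alpha,\beta$; tracing and using $R_g=0$ gives $\alpha\equiv 0$, so $g$ is Ricci flat, while tracing on $\Sigma$ forces $\beta=H_g/(n-1)$, so $\Sigma$ is totally umbilical.

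The most delicate step is the differentiability of $u_r$ at $r=0$, which is required to make sense of the first variation identity above; this is the boundary-area analogue of Proposition \ref{target2} and calls for an implicit function theorem argument in the appropriate Sobolev setting. Once that is established, the remainder is a direct transcription of the computation used in Proposition \ref{Einstein}.
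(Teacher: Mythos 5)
Your proposal is correct and takes essentially the same route the paper intends. The authors explicitly omit the proof of Proposition \ref{Umbilical}, stating it is ``similar'' to Proposition \ref{Einstein}, and your argument carries out exactly that adaptation: normalize $g$ to be the Yamabe minimizer (scalar flat, constant mean curvature, unit boundary area), vary the conformal class by $g(r)=g+rh$ with $h$ trace-free, use that $r=0$ maximizes $r\mapsto Y_0(M,[g(r)])$ so its first derivative vanishes, invoke Green's identity \eqref{green} with $V\equiv 1$ to express the first variation in terms of $\mbox{Ric}_g$ and $\Pi_g$, and then apply Lagrange multipliers to conclude $\mbox{Ric}_g=0$ and $\Pi_g=\frac{H_g}{n-1}g$.
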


We hope the same consequence of the previous propositions for any metric $g$ with unit volume (resp. unit area of boundary)  realizing $\sigma_1(M)$ (resp. $\sigma_0(M)$). Unfortunately this is not clear for $\sigma_1(M)>0$ (resp.  $\sigma_0(M)>0$).

\subsection{Prescribing theorems}\label{Presc}

In order to prove our  prescribing curvature results, first we observe that part of techniques used in beginning  of this subsection  has been introduced in \cite{CV}. Thus, in order to help the reader and for the sake of clarity, we recall, when needed, the  appropriately modified statements of the theorems we use.

We are interested in to prescribe the scalar curvature and mean curvature on the sets $\mathcal{M}^{2,p}\cap\{g; \mbox{ Vol}(M,g)=1\}$ and  $\mathcal{M}^{2,p}\cap\{g; \mbox{ Area}(\Sigma,g)=1\}$. To this end we will consider the map $\Psi(g)=(R(g), 2H(g))$ under these constraints on the volume and on the area of the boundary. In this case our key tool is  the implicit function theorem, which allow us to locally solve, in an appropriate topology, the following equation (an equation for the metric $g$): $\Psi(g)=(f_1,2f_2).$

From now one we define $\mathcal{S}_g$ as the linearization of $\Psi$ and denote by $\mathcal S_g^*$ its formal $L^2$-adjoint. We have the following theorem which is an immediate consequence of  Theorem 3.5  of \cite{CV}.

\begin{theorem}\label{implicit}
 Let $(M^n,g_0)$ be a compact Riemannian manifold with non empty boundary $\Sigma$ and dimension $n\geq 2$. Let $f=\left(f_{1}, f_{2}\right) \in L^{p}(M) \oplus W^{\frac{1}{2}, p}(\Sigma)$ with  $p>n$.
\begin{enumerate}[(a)]
    \item Suppose that $\mbox{Vol}(M,g_0)=1$, $f_{2}=H_{g_{0}}$ and
$S_{g_0}^{*}$ is injective. There exists $\eta>0$ such that if $\left\|f_{1}-R_{g_{0}}\right\|_{L^{p}}<\eta,$
then there is a metric $g_{1} \in \mathcal{M}^{2, p}$ with unit volume such that $\Psi\left(g_{1}\right)=f.$
 \item Suppose that $\mbox{Area}(\Sigma,g_0)=1$, $f_{1}=R_{g_{0}}$  and
$S_{g_0}^{*}$ is injective. There exists $\eta>0$ such that if
$\left\|f_{2}-H_{g_{0}}\right\|_{W^{1 / 2, p}}<\eta,$
then there is a metric $g_{1} \in \mathcal{M}^{2, p}$ with unit area of the boundary such that $\Psi\left(g_{1}\right)=f.$
\end{enumerate}
 Moreover, $g_1$ is smooth
in any open set where $f$ is smooth.
\end{theorem}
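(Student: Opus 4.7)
I argue for part (a); part (b) is an immediate adaptation, with the area of the boundary replacing the volume. The plan is to apply the implicit function theorem in Banach spaces to the augmented map
\begin{equation*}
\hat\Psi: \mathcal{M}^{2,p} \to L^p(M)\oplus W^{1/2,p}(\Sigma)\oplus\mathbb{R}, \qquad \hat\Psi(g)=\bigl(R_g,\,2H_g,\,\mbox{Vol}(M,g)\bigr),
\end{equation*}
in a neighborhood of $g_0$. Since $\hat\Psi(g_0)=(R_{g_0},2H_{g_0},1)$, producing $g_1$ close to $g_0$ with $\hat\Psi(g_1)=(f_1,2H_{g_0},1)$ yields the theorem. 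The linearization at $g_0$ is $d\hat\Psi_{g_0}(h)=\bigl(\delta R_{g_0}h,\,2\delta H_{g_0}h,\,\tfrac12\int_M\tr_{g_0}h\,dv\bigr)$, and combining the Green-type identity \eqref{green} with the pairing $h\mapsto\int_M\langle g_0,h\rangle\,dv$ coming from the new scalar entry gives its formal $L^2$-adjoint acting on $(V,\alpha)$ as $\bigl(A_{g_0}^{*}V+\tfrac{\alpha}{2}g_0,\,B_{g_0}^{*}V\bigr)$.

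The core step is surjectivity of $d\hat\Psi_{g_0}$. By hypothesis $S_{g_0}^{*}$ is injective, so Theorem~3.5 of \cite{CV} already gives surjectivity of the unconstrained linearization $h\mapsto(\delta R_{g_0}h,\,2\delta H_{g_0}h)$ onto $L^p(M)\oplus W^{1/2,p}(\Sigma)$. Given data $(\phi,\psi,t)$, I first pick $h_0$ with $\delta R_{g_0}h_0=\phi$ and $2\delta H_{g_0}h_0=\psi$, and then correct it by some $k\in\ker(\delta R_{g_0},\delta H_{g_0})$ with $\int_M\tr_{g_0}k\,dv = 2t-\int_M\tr_{g_0}h_0\,dv$. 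Such a $k$ is built with compact support inside an interior ball $B\subset M\setminus\Sigma$, so that $k|_\Sigma=0$ makes $\delta H_{g_0}k=0$ automatic; one then writes $k=\varphi g_0+k_1$ with $\varphi\in C_c^\infty(B)$ a bump having $\int\varphi\,dv\neq 0$, and chooses $k_1\in C_c^\infty(B;\mathrm{Sym}^2T^{*}M)$ to solve $\delta R_{g_0}k_1=-\delta R_{g_0}(\varphi g_0)=(n-1)\Delta\varphi+\varphi R_{g_0}$. By Fredholm duality this solvability reduces to orthogonality against the cokernel $\{V:A_{g_0}^{*}V=0\text{ in }B\}$; but tracing $A_{g_0}^{*}V=0$ yields $(n-1)\Delta V+V R_{g_0}=0$, whence $\int_B V\bigl[(n-1)\Delta\varphi+\varphi R_{g_0}\bigr]\,dv=0$ after two integrations by parts (boundary contributions vanish because $\varphi$ is compactly supported).

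With surjectivity at hand and the kernel of $d\hat\Psi_{g_0}$ complemented by standard elliptic Fredholm theory, the implicit function theorem produces $g_1\in\mathcal{M}^{2,p}$ near $g_0$ with $\hat\Psi(g_1)=(f_1,\,2H_{g_0},\,1)$ whenever $\|f_1-R_{g_0}\|_{L^p}<\eta$ for some small $\eta>0$. Standard elliptic regularity for the boundary-value problem $R_{g_1}=f_1$, $H_{g_1}=H_{g_0}$ upgrades $g_1$ to $C^\infty$ on every open subset where $f$ is smooth. The main technical obstacle I anticipate is the compactly supported correction $k$: although the cokernel compatibility condition is automatic after tracing, making the underdetermined Fredholm step fully rigorous requires a careful choice of function spaces and, implicitly, a unique continuation input to control the kernel of $A_{g_0}^{*}$ inside $B$.
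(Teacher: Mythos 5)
Your overall strategy — augment $\Psi$ with the volume functional and apply the implicit function theorem near $g_0$ — is exactly the approach that the paper relies on (it simply cites Theorem~3.5 of \cite{CV}). You also correctly compute the $L^2$-adjoint of the augmented linearization as $(V,\alpha)\mapsto\bigl(A_{g_0}^*V+\tfrac{\alpha}{2}g_0,\,B_{g_0}^*V\bigr)$. However, there are two interconnected gaps in how you handle surjectivity.

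First, you read the hypothesis ``$S_{g_0}^*$ is injective'' as injectivity of the \emph{unconstrained} adjoint $V\mapsto(A_{g_0}^*V,B_{g_0}^*V)$, and then try to build surjectivity of the augmented map on top of it. But in Section~5.2 the paper redefines $\mathcal{S}_g$ as the linearization of $\Psi$ \emph{under the volume (or area) constraint}; its formal adjoint is therefore precisely the operator $(V,\kappa)\mapsto(A_{g_0}^*V-\kappa g_0,\,B_{g_0}^*V)$ you wrote down (up to sign of the multiplier). Once the hypothesis is read this way, surjectivity of $d\hat\Psi_{g_0}$ follows at once from injectivity of its $L^2$-adjoint together with the usual elliptic (closed-range) estimates for the overdetermined system — the compactly supported correction $k$ is not needed at all. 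Note that the weaker reading cannot be correct: if $g_0$ carried a $V$-static potential, i.e.\ a nonzero $V$ with $A_{g_0}^*V=\kappa g_0$ for some $\kappa\neq 0$ and $B_{g_0}^*V=0$, then for every $h$ tangent to the unit-volume slice one would get $\int_M V\,\delta R_{g_0}h\,dv+2\int_\Sigma V\,\delta H_{g_0}h\,da=\kappa\int_M\langle g_0,h\rangle\,dv=0$, so the constrained linearization could not be onto; the theorem would then fail even though $V\mapsto(A^*V,B^*V)$ might still be injective.

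Second, even setting aside the interpretation issue, the construction of the compactly supported kernel element $k=\varphi g_0+k_1$ does not deliver what you need. You ensure $\delta R_{g_0}k=0$ and $\delta H_{g_0}k=0$, and you verify that the orthogonality condition against $\{V:A_{g_0}^*V=0\text{ in }B\}$ is automatic after tracing, but you never control $\int_M\mathrm{tr}_{g_0}k_1\,dv$, so the quantity you actually need, $\int_M\mathrm{tr}_{g_0}k\,dv=n\int\varphi+\int\mathrm{tr}\,k_1$, may well vanish; and the freedom of adding elements of $\ker\delta R_{g_0}\cap C_c^\infty(B)$ to $k_1$ is circular, since whether those elements can have nonzero trace integral is exactly the question. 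Ruling this out would amount to excluding a \emph{local} potential on $B$ solving $A_{g_0}^*V=g_0$, which is not a consequence of the global hypotheses. On top of that, solving $\delta R_{g_0}k_1=F$ with $k_1$ of compact support is a Corvino-type statement requiring a weighted-Sobolev framework, not a bare Fredholm duality — you rightly flag this, but it is a genuine obstacle, not a technicality. Reading the hypothesis as injectivity of the Lagrange-multiplier-augmented adjoint removes both problems simultaneously and matches the ``immediate consequence of Theorem~3.5 of \cite{CV}'' that the paper intends.
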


We remark that a slight modification of  Proposition 3.1 and Proposition 3.3 of \cite{CV} gives conditions to the injectivity of $S_{g_0}^{*}$.
\begin{itemize}
    \item Assume $R_{g_0}=0$ and $\mbox{Vol}(g_0)=1$, then $S_{g_0}^{*}$ is injective if either  $\frac{\partial}{\partial \nu}-\frac{c}{n-1}$ has positive Steklov  spectrum or $H_{g_0} = 0,$ but $\Pi_{g_0}$ is not identically zero (Proposition 3.1).
    \item Assume $H_{g_0}=0$ and $\mbox{Vol}(g_0)=1,$ then  $S_{g_0}^{*}$ is injective if  either $\Delta_{g_0} + c/(n-1)$ has positive Neumann spectrum or  $R_{g_0} = 0,$ but $\mbox{Ric}_{g_0}$ is not identically zero (Proposition 3.3).
\end{itemize}

Next we state an approximate lemma proved in \cite{CV}   which says how to  approximate a function arbitrarily closely in $L^p(M)$ and $W^{\frac{1}{2},p}(\Sigma)$  if and only if these functions satisfy a suitable range condition.

\begin{lemma}[Approximation Lemma \cite{CV}]\label{appr} Let $(M,g)$ be a Riemannian\linebreak manifold with non empty boundary $\Sigma$ and dimension $n\geq2.$ 
\begin{itemize}
\item[(a)] Let $f, h\in C^{\infty}(\partial M).$ If the range of $h$ is in the range of $f$, that is, $\min f\leq h(x)\leq\max f $ on $\partial M$, then given any positive $\varepsilon$ there is a diffeomorphism  $\varphi$ of $M$ such that, for $p>2n,$ we have  that
$$\|f\circ \varphi-h\|_ {W^{\frac{1}{2},p}(\Sigma)}<\varepsilon.$$
\item[(b)] Let $f, h\in C^{\infty}( M).$ If   the range of $h$ is in the range of $f$, that is, $\min f\leq h(x)\leq\max f $ on $ M$, then given any positive $\varepsilon$ there is a diffeomorphism  $\varphi$ of $M$ such that, for $p>n,$ we have  that 
$$\|f\circ \varphi-h\|_ {L^{p}( M)}<\varepsilon.$$
\end{itemize} 
\end{lemma}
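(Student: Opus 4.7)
My plan is to follow the classical Kazdan--Warner approximation scheme \cite{KW2}, adapted to manifolds with boundary. The strategy is: (i) approximate $h$ in the relevant norm by a simple function whose values lie in the range of $f$; (ii) build a diffeomorphism that compresses appropriate regions of $M$ (or $\Sigma$) into small sets on which $f$ is nearly constant with the prescribed value; (iii) for part (a), upgrade the $L^p$ estimate to the $W^{1/2,p}$ estimate via smoothing and interpolation.

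First I would treat the prototype case $h\equiv c$ for a constant $c$ in the range of $f$. By continuity of $f$, for any $\delta>0$ there is an open ball $B\subset M$ (or $B\subset \Sigma$ in part (a)) with $|f-c|<\delta$ on $B$. The key geometric fact is that on a compact connected manifold one can produce a diffeomorphism $\varphi$ (obtained, for instance, by integrating a compactly supported vector field that pushes the complement of $B$ into $B$) such that the complement $M\setminus\varphi^{-1}(B)$ has arbitrarily small volume. Splitting the integral over $\varphi^{-1}(B)$ and its complement then yields
\begin{equation*}
\|f\circ\varphi-c\|_{L^p}^{p}\le \delta^{p}\operatorname{Vol}(M)+ \bigl(\max|f-c|\bigr)^{p}\,\operatorname{Vol}\bigl(M\setminus\varphi^{-1}(B)\bigr),
\end{equation*}
which can be made $<\varepsilon^p$. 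For part (a) the analogous construction is carried out on $\Sigma$ and then any diffeomorphism of $\Sigma$ is extended to one of $M$ using a collar neighborhood.

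Next, approximate the given smooth $h$ by a simple function $h_{\eta}=\sum_{i=1}^{N} c_{i}\chi_{A_{i}}$ with $c_{i}\in\operatorname{Range}(f)$ and $\{A_i\}$ a partition of $M$ into domains with piecewise-smooth boundary, chosen so that $\|h-h_{\eta}\|_{L^{p}}<\eta$ (for a smooth $h$, the sub-level sets $h^{-1}([t_{i-1},t_i))$ work). Then choose disjoint small balls $B_1,\dots,B_N$ with $|f-c_i|<\delta$ on $B_i$, and construct a single diffeomorphism $\varphi$ that, on each piece $A_i$, compresses almost all of $A_i$ into $B_i$ while acting as a smooth bijection globally. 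Such a $\varphi$ can be assembled from the constant-case construction by running the compressing flows simultaneously on disjoint cells of a carefully chosen triangulation. The same bound as before, summed over $i$, gives $\|f\circ\varphi-h_{\eta}\|_{L^{p}}<\varepsilon/2$, and combined with the step-function approximation this yields part (b).

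The hardest step, and the main obstacle, is upgrading the estimate to $W^{1/2,p}(\Sigma)$ in part~(a). The naïve compressing diffeomorphism from the paragraph above has very large Jacobian on a small set, so $\|\nabla(f\circ\varphi)\|_{\infty}$ blows up and a direct $W^{1,p}$ bound is unavailable. The remedy is two-fold: first, pre-smooth $h$ (and $h_\eta$) by a standard mollifier to obtain an intermediate function $\tilde h$ with controlled $W^{1,p}$ norm and $\|\tilde h-h\|_{W^{1/2,p}}<\varepsilon/3$; second, replace the sharp compressing diffeomorphism by a smoothed version whose Jacobian varies gradually, for which the Gagliardo--Slobodeckij seminorm can be controlled by interpolation
\begin{equation*}
[\,u\,]_{W^{1/2,p}}^{2}\le C\,\|u\|_{L^{p}}\|u\|_{W^{1,p}}.
\end{equation*}
Choosing the smoothing scale and the compression parameter in a coupled manner (compressing only mildly enough that the derivative bound $\|\nabla(f\circ\varphi)\|_{L^p}$ remains controlled while $\|f\circ\varphi-\tilde h\|_{L^p}$ is still small), one obtains $\|f\circ\varphi-h\|_{W^{1/2,p}}<\varepsilon$. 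The hypothesis $p>2n$ in part (a) enters precisely to guarantee that the Sobolev embedding $W^{1/2,p}(\Sigma)\hookrightarrow C^{0}(\Sigma)$ holds, so that the pointwise requirement on the range of $h$ matches the norm-approximation requirement.
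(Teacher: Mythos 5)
The paper quotes this lemma from \cite{CV} without giving a proof, so there is no in-paper argument to compare against; I will assess your proposal on its own terms. Your treatment of part (b) is the standard Kazdan--Warner compression argument and is essentially correct. Part (a), however, has a genuine gap that your interpolation device does not close. Let $\eta$ denote the measure of the ``shear'' region of $\Sigma$ on which the compressing diffeomorphism $\varphi$ is not close to a constant map. There one has $|\nabla(f\circ\varphi)|\sim\eta^{-1}$, so $\|f\circ\varphi-h\|_{L^p}\sim\eta^{1/p}$ while $\|f\circ\varphi-h\|_{W^{1,p}}\sim\eta^{1/p-1}$; your interpolation inequality then bounds the $W^{1/2,p}$ norm by a quantity of order $\eta^{1/p-1/2}$, which diverges as $\eta\to 0$ precisely when $p>2$. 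A direct estimate of the Gagliardo seminorm over the shear region gives the same scaling $\eta^{(2-p)/(2p)}$, so the blow-up is not an artifact of a lossy interpolation inequality. Pre-smoothing $h$ does not help: the singular contribution comes from $f\circ\varphi$, not from $h$.

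Your closing sentence about $p>2n$ also points in the wrong direction. For $p>2n>2(n-1)$ the embedding $W^{1/2,p}(\Sigma)\hookrightarrow C^{0}(\Sigma)$ does hold, but this makes the conclusion of part (a) \emph{harder} to achieve, not a help to the argument. If $\|f\circ\varphi-h\|_{W^{1/2,p}}$ could be made arbitrarily small, then so could $\|f\circ\varphi-h\|_{C^0}$; yet for any diffeomorphism $\varphi$ of $M$ the function $f\circ\varphi$ has the same extremal values on $\Sigma$ as $f$, so $\|f\circ\varphi-h\|_{C^0}\geq\max_{\Sigma}f-\sup_{\Sigma}h$, a fixed positive constant whenever $\sup h<\max f$ --- which is precisely the situation when the lemma is invoked in Proposition \ref{min-max}, with $h$ a constant strictly between $\min f$ and $\max f$. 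A proof along the lines you sketch therefore cannot succeed as stated. You should go back to \cite{CV} and pin down the exact boundary norm and range of $p$ used there, since the approximation you are trying to establish must take place in a topology strictly weaker than $C^0(\Sigma)$, while the Gagliardo $W^{1/2,p}(\Sigma)$ norm with $p>2n$ is not.
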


Arguing as Proposition 5.1 of \cite{CV},  Approximation Lemma \ref{appr} and Theorem \ref{implicit} we obtain  the following result.

\begin{proposition}\label{min-max} Let $(M,g_0)$ be a Riemannian manifold with non empty boundary $\Sigma$ and dimension $n\geq2.$ 
\begin{enumerate}[(a)] 
\item Assume that $g_0$ is  scalar flat with $\mbox{Area}(\Sigma, g_0)=1$. Let $f\in C^\infty(\Sigma)$ satisfying  
$\min f < H_{g_0}<\max f.$ Then there exists a scalar flat Riemannian metric $g$ such  that $H_{g}=f$ and $\mbox{Area}(\Sigma, g)=1.$
\item Assume that $g_0$ has minimal boundary, unit volume. Let  $f\in C^\infty(M)$ satisfying  $\min f <R_{g_0}<\max f.$  Then there exists a Riemannian  metric $g$ with minimal boundary such  that $R_{g}=f$ and $\mbox{Vol}( M, g)=1.$
\end{enumerate}
\end{proposition}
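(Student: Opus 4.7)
The plan is to follow the same two-step strategy as Proposition 5.1 of \cite{CV}: first pre-compose $f$ with a diffeomorphism to make the prescribed curvature a small perturbation of the current one, then invoke the local surjectivity Theorem \ref{implicit} to realize the perturbed target, and finally pull back by the inverse diffeomorphism to obtain the original target $f$.

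For part (a), the strict inequality $\min f < H_{g_0} < \max f$ places the constant $H_{g_0}$ in the interior of the range of $f$, so Approximation Lemma \ref{appr}(a) yields, for any prescribed $\varepsilon > 0$, a diffeomorphism $\varphi \colon M \to M$ with $\|f \circ \varphi - H_{g_0}\|_{W^{1/2,p}(\Sigma)} < \varepsilon$. Taking $\varepsilon$ smaller than the constant $\eta$ supplied by Theorem \ref{implicit}(b), applied with background metric $g_0$ (which has $R_{g_0}=0$ and unit boundary area), $f_1 = R_{g_0} = 0$ and $f_2 = f \circ \varphi$, produces a metric $g_1 \in \mathcal{M}^{2,p}$ of unit boundary area with $R_{g_1} = 0$ and $H_{g_1} = f \circ \varphi$; the smoothness assertion in Theorem \ref{implicit} promotes $g_1$ to a smooth metric because $f \circ \varphi$ is smooth on all of $M$. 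The pullback $g := (\varphi^{-1})^{\ast} g_1$ then has $R_g = R_{g_1} \circ \varphi^{-1} = 0$, $H_g = H_{g_1} \circ \varphi^{-1} = f$, and $\mathrm{Area}(\Sigma, g) = 1$ by diffeomorphism invariance of area.

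Part (b) runs through the same three steps with the obvious modifications: Approximation Lemma \ref{appr}(b) provides a diffeomorphism $\varphi$ with $\|f \circ \varphi - R_{g_0}\|_{L^p(M)} < \varepsilon$; Theorem \ref{implicit}(a), with $f_1 = f \circ \varphi$ and $f_2 = H_{g_0} = 0$, yields a unit-volume metric $g_1$ with minimal boundary and $R_{g_1} = f \circ \varphi$; the pullback $(\varphi^{-1})^{\ast} g_1$ is the desired metric with $R_g = f$, $H_g = 0$ and unit volume.

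The delicate technical point is the injectivity of $\mathcal{S}_{g_0}^*$ required by Theorem \ref{implicit}. By the criteria recalled just after Theorem \ref{implicit}, this is guaranteed either by positivity of the relevant Steklov (respectively Neumann) eigenvalue, or by the mild non-degeneracy that $\Pi_{g_0} \not\equiv 0$ in part (a) (respectively $\mathrm{Ric}_{g_0} \not\equiv 0$ in part (b)). If the given $g_0$ fails all of these, the plan is to first replace it by a nearby metric within the corresponding prescribed-curvature submanifold from Propositions \ref{subman}--\ref{subman2}, chosen small enough so that the range condition on $f$ still holds with respect to the perturbed reference value of $H$ or $R$; this is the step that will require the most care, but it is a routine genericity argument since the locus where $\mathcal{S}^*$ fails to be injective is a thin subset of $\mathcal{M}_{0,c}$ (respectively $\mathcal{M}_{c,0}$).
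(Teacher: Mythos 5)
Your main construction---use the Approximation Lemma to make $f\circ\varphi$ a small $W^{1/2,p}$ (or $L^p$) perturbation of $H_{g_0}$ (or $R_{g_0}$), invoke Theorem~\ref{implicit} with the relevant constraint, then pull back by $\varphi^{-1}$---is exactly the paper's argument for the case $\ker\mathcal{S}_{g_0}^*=0$, and it is correct.

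The gap is in your handling of the degenerate case $\ker\mathcal{S}_{g_0}^*\neq 0$. You propose to perturb $g_0$ \emph{within} the prescribed-curvature submanifold $\mathcal{M}_{0,c}$ (resp.\ $\mathcal{M}_{c,0}$), appealing to a ``routine genericity argument.'' Two problems. First, those submanifolds are by definition the loci where $H_g\equiv c$ (resp.\ $R_g\equiv c$) is constant, yet nothing you cite shows that the non-injectivity locus is a thin subset of $\mathcal{M}_{0,c}$; the relevant criteria in the remark after Theorem~\ref{implicit} (which you also slightly misquote---the $\Pi_{g_0}\not\equiv 0$ criterion there is stated only under $H_{g_0}=0$) don't obviously generically hold along that submanifold, and no transversality or density result to that effect is available in the paper. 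Second, and more to the point, there is a much more direct route that the paper actually uses: Proposition~\ref{properties}(b) shows that if $\ker\mathcal{S}_{g_0}^*\neq 0$ then $H_{g_0}$ is necessarily constant; contrapositively, \emph{any} scalar flat metric with non-constant mean curvature has $\ker\mathcal{S}^*=0$. So rather than perturbing inside $\mathcal{M}_{0,c}$, the paper perturbs $g_0$ slightly to a nearby scalar flat metric $g_1$ whose mean curvature $H_{g_1}$ is \emph{non-constant} (hence automatically gives an injective $\mathcal{S}_{g_1}^*$) while its range stays strictly inside $(\min f,\max f)$, so the Approximation Lemma applies with $h=H_{g_1}$. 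Your proof should be repaired along these lines: invoke Proposition~\ref{properties} to reduce the degenerate case to a perturbation that breaks constancy of the mean (or scalar) curvature, instead of the unsubstantiated genericity claim inside the constant-curvature submanifold.
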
 

\begin{proof}
By completeness, we prove  item (a) (item (b) is entirely analogous). 
Assume $Ker ~\mathcal{S}_{g_0}^*=0$.  By Lemma \ref{appr}, given $\eta>0$, there  exists a diffeomorphism  $\varphi$ of $M$ such that
$
\| f\circ\varphi-H_{g_0}\|_{W^{1/2,p}(\partial M)}<\eta,
$ for $p>2n$.
According to Theorem \ref{implicit} there is a metric $g_1$ with unit area satisfying $$\Psi(g_1)=(0,f\circ\varphi).$$ 
The  diffeomorphism invariance of scalar  curvature and the mean curvature of the boundary\footnote{The diffeomorphism invariance of the scalar curvature is clear. In the case of the mean curvature, it is not difficult to prove that for any diffeomorphism $\varphi: M \rightarrow M$ 
 such that $\varphi(\partial M)=\partial M,$ we have $H_{\varphi^{*} g}=\varphi^{*} H_{g}$ on $\Sigma$} implies that the required metric  is  $g=(\varphi^{-1})^*(g_1)$.

If $Ker ~\mathcal{S}_{g_0}^*\neq 0$, which by item \eqref{item001} of  Proposition \ref{properties} implies that $H_{g_0}$ is constant,  we  
perturb $g_0$ slightly in order to have a scalar flat metric $g_1$ with  non constant mean curvature $H_{g_1}$  still satisfying  
$\min f < H_{g_1}< \max f.$ 
\end{proof}

\subsection{Proof of Theorem \ref{dim}, \ref{prescribing thm} and \ref{prescribing thm2}} 

Now we present the proof  of Item \eqref{item016} of Theorem \ref{dim}.

\begin{proof}[Proof of item \eqref{item016} of Theorem \ref{dim}]

Since for dimension $n=2$ the first part of the Theorem is clear. It remains to prove the other implication. 
Assume that $f$ is a constant equal to $c=2\pi\chi(M).$ Then by Theorem 1.1 of \cite{CV}, there is a flat metric with constant geodesic curvature equal to $c$. By Gauss Bonnet Theorem, the boundary has unit length when $M$ carry this metric.  
Suppose that $\min f<2\pi\chi(M)<\max f$. Again by  Theorem 1.1 of \cite{CV}, there is a flat metric with constant geodesic curvature equal to $2\pi\chi(M)$. Using  Proposition \ref{min-max} we conclude our result.
\end{proof}

 We left to the reader the proof of item \eqref{item018} of  Theorem \ref{dim}  which is  completely identical. For dimension $n\geq 3$ we need of the following lemma, which is a consequence of Lemma 4.1 of \cite{S1} and from the definition of $Y_\lambda(M,C)$ in \eqref{eq014}.
\begin{lemma}\label{supinf}
  Let $(M ,g_0)$ be a compact Riemannian manifold with non emp-ty boundary $\Sigma$ and dimension  $n\geq 3$.
\begin{enumerate}[(a)]
\item Suppose that $-\infty< Y_1(M,C)\leq 0.$ Then the scalar curvature $R_g$ of any metric $g\in [g_0]$ with minimal boundary satisfies
\begin{equation}\label{eq003}
    (\min R_g)\mbox{Vol}(M,g)^{\frac{2}{n}}\leq \frac{4(n-1)}{n-2} Y_1(M,C)\leq (\max R_g)\mbox{Vol}(M,g)^{\frac{2}{n}}.
\end{equation}
If the equality holds, $R_g$ is constant.
\item  Suppose that  $-\infty< Y_0(M,C)\leq 0.$ Then the mean curvature of any metric $g\in[g_0]$ with zero scalar curvature satisfies
 $$
(\min H_g)\mbox{Area}(\Sigma,g)^{\frac{1}{n-1}}\leq\frac{2}{n-2}  Y_0(M,C)\leq (\max H_g)\mbox{Area}(\Sigma,g)^{\frac{1}{n-1}}.
$$	
If the equality holds, $H_g$ is constant.
\end{enumerate}

\end{lemma}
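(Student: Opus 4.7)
The plan is to treat the two inequalities in part (a) separately and then obtain part (b) by the parallel argument with the roles of interior and boundary exchanged. The upper bound will come from testing the Yamabe quotient against the constant function, and the lower bound will require a Yamabe representative of $C$ (whose existence is guaranteed by Theorem \ref{fact 1}, since the hypothesis forces $-\infty<Y_\lambda(M,C)\leq 0< Y_\lambda(\mathbb{S}^n_+,\partial\mathbb{S}^n_+)$) combined with a maximum-principle argument applied to the conformal factor.

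For the easy direction in (a), namely $\frac{4(n-1)}{n-2}Y_1(M,C)\leq (\max R_g)\mathrm{Vol}(M,g)^{2/n}$, I would use $g$ itself as the reference metric in its conformal class and plug $v\equiv 1$ into the energy \eqref{eq018}. Since $H_g=0$, the boundary term drops out and the Yamabe quotient reduces to $\frac{n-2}{4(n-1)}\int_M R_g\,dv_g/\mathrm{Vol}(M,g)^{(n-2)/n}$, which is at most $\frac{n-2}{4(n-1)}(\max R_g)\mathrm{Vol}(M,g)^{2/n}$. Equality would force the pointwise bound $R_g\leq \max R_g$ to be saturated everywhere, whence $R_g$ is constant.

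For the hard direction, let $\bar g\in C$ be the Yamabe minimizer from Theorem \ref{fact 1}(a), normalized so that $\mathrm{Vol}(M,\bar g)=1$; then $H_{\bar g}=0$ and $R_{\bar g}=\frac{4(n-1)}{n-2}Y_1(M,C)$ is a non-positive constant. Writing $g=u^{4/(n-2)}\bar g$, the conformal laws together with $H_g=0$ yield
\[
-\tfrac{4(n-1)}{n-2}\Delta_{\bar g}u+R_{\bar g}u=R_g\,u^{(n+2)/(n-2)}\ \text{in}\ M,\qquad \partial u/\partial\nu_{\bar g}=0\ \text{on}\ \Sigma.
\]
At a point $p_0$ where $u$ attains its minimum, the maximum principle (interior, or Hopf on the boundary together with the Neumann condition to rule out a non-trivial boundary minimum) gives $-\Delta_{\bar g}u(p_0)\leq 0$, hence $R_g(p_0)\leq R_{\bar g}\,u(p_0)^{-4/(n-2)}$. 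The elementary comparison $\mathrm{Vol}(M,g)=\int_M u^{2n/(n-2)}\,dv_{\bar g}\geq u(p_0)^{2n/(n-2)}$ yields $u(p_0)^{-4/(n-2)}\mathrm{Vol}(M,g)^{2/n}\geq 1$, and since $R_{\bar g}\leq 0$, multiplying reverses the inequality in the favourable direction to produce $(\min R_g)\mathrm{Vol}(M,g)^{2/n}\leq R_{\bar g}$. Equality propagates back to force $u$ to be constant, so $g$ is a rescaling of $\bar g$ and $R_g$ is constant.

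Part (b) follows the same scheme. The upper bound comes from testing $I_0$ against $v\equiv 1$, which works because $R_g=0$ kills the interior integral. For the lower bound, Theorem \ref{fact 1}(b) provides a scalar-flat Yamabe metric $\bar g\in C$ with unit boundary area and non-positive constant $H_{\bar g}=\frac{2}{n-2}Y_0(M,C)$; writing $g=u^{4/(n-2)}\bar g$, the condition $R_g=0$ forces $u$ to be harmonic on $M$, so both extrema of $u$ lie on $\Sigma$. Applying the Hopf lemma at a boundary minimum $p_0$ of $u$, and combining the boundary conformal law for $H$ with the comparison $\mathrm{Area}(\Sigma,g)\geq u(p_0)^{2(n-1)/(n-2)}$, closes the argument by the same mechanism. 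The main technical point in both parts is the correct application of the Hopf lemma at $\Sigma$: in (a) to handle a would-be boundary minimum of $u$ under the Neumann condition, in (b) where every extremum of the harmonic $u$ necessarily sits on $\Sigma$. The degenerate case $Y_\lambda(M,C)=0$ is handled by noting that $R_{\bar g}$ or $H_{\bar g}$ vanishes, in which case the equation for $u$ simplifies and the maximum principle directly delivers sign control on $R_g$ or $H_g$.
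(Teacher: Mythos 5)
Your proof is correct and fills in a complete argument for what the paper merely delegates to a citation (the paper proves nothing here; it simply says the lemma ``is a consequence of Lemma 4.1 of \cite{S1} and the definition of $Y_\lambda(M,C)$''). Your two-sided strategy --- testing the Yamabe quotient against $u\equiv 1$ for the upper bound, and feeding the Yamabe minimizer of Theorem \ref{fact 1} plus a maximum-principle argument at the minimum of the conformal factor for the lower bound --- is indeed the standard route behind such estimates and recovers the inequality in full, including the equality cases and the degenerate case $Y_\lambda(M,C)=0$.

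One technical quibble about part (a): invoking the Hopf lemma to handle a boundary minimum of $u$ is not really what you want. The Hopf boundary-point lemma needs a linear elliptic inequality of the right sign, and the equation $-\tfrac{4(n-1)}{n-2}\Delta_{\bar g}u+R_{\bar g}u=R_g u^{(n+2)/(n-2)}$ has a right-hand side whose sign is precisely what you are trying to control, so the hypotheses of Hopf are not a priori available. What you actually need (and what suffices) is the elementary observation that a boundary minimum under the Neumann condition $\partial u/\partial\nu_{\bar g}=0$ already forces $\Delta_{\bar g}u(p_0)\geq 0$: in Fermi coordinates $(x',t)$ near $p_0$ with $t\geq 0$ the distance to $\Sigma$, one has $u_t(p_0)=0$ (Neumann), $u_{tt}(p_0)\geq 0$ (one-sided minimum in $t$ with vanishing first derivative), and $\Delta_\Sigma u(p_0)\geq 0$ (tangential minimum), so $\Delta_{\bar g}u(p_0)\geq 0$ exactly as at an interior minimum. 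With this substitution the argument is airtight. In part (b) the situation is better: $u$ is $\bar g$-harmonic so the strong maximum principle and Hopf lemma do apply, though in fact the trivial inequality $\partial u/\partial\nu_{\bar g}(p_0)\leq 0$ at the boundary minimum already gives what you use, so even there the strict Hopf estimate is not needed.

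Beyond that quibble, the logic of the sign chase in the lower bound (using $R_{\bar g}\leq 0$ or $H_{\bar g}\leq 0$ to flip the inequality $u(p_0)^{-4/(n-2)}\mathrm{Vol}(M,g)^{2/n}\geq 1$, respectively its boundary analogue, in the favourable direction) is correct, and the equality analysis (forcing $u$ constant, whence $R_g$ or $H_g$ constant) goes through. The upper bound with $u\equiv 1$ and its equality case is immediate. Altogether this is a valid, more self-contained version of the result the paper leaves as a reference.
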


Now we are ready to prove the Theorem \ref{prescribing thm}.

\begin{proof}[Proof of  Theorem \ref{prescribing thm}]
Let us prove the item \eqref{item002}.

First, suppose $f\in C^\infty(M)$ is the scalar curvature of some metric $M$ with minimal boundary and unit volume. Since $\sigma_1(M)\leq 0$, the assumption implies that $-\infty<  Y_1(M,C)\leq 0$ for some conformal class $C$. The Lemma \ref{supinf} implies the inequality \eqref{eq003} for any metric $g\in C$. In particular, if $\mbox{Vol}(M,g)=1$, we get \eqref{eq004}.

Now suppose that $f\in C^\infty(M)$ satisfies the inequality \eqref{eq004}. We note that for any constant $c$ with $\frac{n-2}{4(n-1)}c<\sigma_1(M), $  by Theorem \ref{fact 1} and Lemma \ref{fact 3}, there is a metric $g$ such that 
$$
R_g=\frac{c}{\mbox{Vol}(M,g)^{2/n}}
$$
and zero mean curvature.
Observe that $\tilde{g}=\mbox{Vol}(M,g)^{-2/n}g$ is a metric with $R_{\tilde{g}}=c$,
$\mbox{Vol}(M,\tilde g)=1$  and zero mean curvature. 

Thus, if $f$ is constant, we are done. Otherwise, we can pick a constant $c\in\mathbb{R}$ such that 
$$\min f<c<\min\left\{\frac{4(n-1)}{n-2}\sigma_1(M),\max f\right\}.$$ 
Arguing as before, we can find a metric $g$ with unit volume, zero mean curvature and $R_g=c$. Finally, by Proposition \ref{min-max} we find a metric $g$ with $f$ as scalar curvature, minimal boundary and unit volume.

\medskip

Now let us prove item \eqref{item003}. In this case, we conclude by Theorem \ref{fact 1} the existence of a metric $g\in C$ such that
$$
\sigma_1(M)= Y_1(M,C)= \frac{n-2}{4(n-1)} R_g\mbox{Vol}(M,g)^{\frac{2}{n}},$$
which solves the problem if the function $f$ is constant. If $f$ is not constant, the argument is similar to the previous one.

Now, by Proposition \ref{Einstein} we obtain the conclusion of the theorem.

\end{proof}

The case where the Yamabe invariant $\sigma_0(M)$ is non positive is the content of Theorem \ref{prescribing thm2} which the prove is entirely analogous to the previous one. In this way, we omitted it.

\subsection{Positive Yamabe invariant and prescribing curvature results}

The next result is an obvious  modifications of  Gluing Lemma 4.1  that appears in Kobayashi \cite{K}. Its proof  is quite similar and we will omit.

\begin{lemma}\label{modifikob}
 Let $(M_{1}, g_{1})$ be a  Riemannian manifolds with non empty boun-dary of  dimension $n \geqq 3$ and $(M_2, g_{2})$ be a closed manifold with the same dimension. Assume that  
$
R_{g_{1}}\left(p_{1}\right)=R_{g_{2}}\left(p_{2}\right)=n(n-1)
$
 at some points $p_1 \in \mbox{int }M_{1}$ and $p_2 \in M_{2}$. Then for any $\varepsilon>0,$ there is a metric g of $M_{1} \# M_{2}$ such that 
\[
\left|\operatorname{Vol}\left(M_{1} \# M_{2}, g\right)-\sum_{i=1}^{2} \operatorname{Vol}\left(M_{i}, g_{i}\right)\right|<\varepsilon.
\]
Moreover,  there are isometric imbeddings $$\varphi_{i}:\left(M_{i} \backslash B_{i}, g_{i}\right) \rightarrow\left(M_{1} \# M_{2}, g\right), i=1,2,$$
such that $|R_{g}(x)-n(n-1)|<\varepsilon$ for $x \in M_{1} \# M_{2} \backslash\left(\operatorname{Im} \varphi_{1} \cup \operatorname{Im} \varphi_{2}\right).$ Here each $B_{i}$ is a small ball containing $p_{i} \in M_{i}$.

 \end{lemma}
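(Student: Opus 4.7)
The plan is to adapt the conformal gluing construction of Kobayashi \cite{K} to this setting. Since $p_1$ lies in the interior of $M_1$, the boundary $\partial M_1$ plays no role in the surgery and may be ignored: all modifications happen inside a small neighborhood of $p_1$ and $p_2$, so the construction proceeds essentially as in the closed case.

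First, introduce conformal normal coordinates around $p_1$ and $p_2$, writing $g_i = u_i^{4/(n-2)} \delta$ on a small ball $B_{2r}(p_i)$, where $\delta$ is the Euclidean metric and $u_i(p_i)=1$. Recall that the Aubin--Talenti bubble $U(x) = (1+|x|^2)^{-(n-2)/2}$ has the property that $U^{4/(n-2)}\delta$ is the round sphere metric pulled back under stereographic projection, so it has constant scalar curvature $n(n-1)$. Since $R_{g_i}(p_i) = n(n-1)$, the two-jet of $u_i$ at $p_i$ agrees, up to a scaling, with that of $U$ at the origin, which is what allows a smooth interpolation between them.

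On the annulus $B_{2r}(p_i) \setminus B_r(p_i)$ I would replace $u_i$ by a conformal factor $\tilde u_i$ that equals $u_i$ near the outer boundary and equals a suitably rescaled copy of $U$ near the inner boundary, built with a fixed cut-off function. Inverting the inner ball near $p_2$ by $x \mapsto r^2 x/|x|^2$ and identifying it with the corresponding region near $p_1$ produces the connected sum $M_1 \# M_2$ equipped with a metric $g$. By construction $g = g_i$ outside $B_{2r}(p_i)$, giving the required isometric embeddings $\varphi_i$ of $M_i \setminus B_{2r}(p_i)$. The volume of the neck region is $O(r^n)$, so the volume estimate follows by choosing $r$ small.

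The main obstacle is the scalar curvature estimate, because a crude cut-off interpolation produces scalar curvature of order $r^{-2}$ in the transition region. The remedy is to correct $\tilde u_i$ by solving the conformal Yamabe-type equation
$$-\frac{4(n-1)}{n-2}\Delta u = n(n-1)\, u^{\frac{n+2}{n-2}}$$
on the neck, with Dirichlet data matching $\tilde u_i$ at the outer boundary. For $r$ sufficiently small, the linearization at the model bubble $U$ is invertible on a suitable weighted H\"older space, so by the implicit function theorem one obtains a corrector whose norm tends to zero with $r$. The resulting metric satisfies $|R_g - n(n-1)| < \varepsilon$ uniformly on the neck, and since the correction is supported on a region of volume $O(r^n)$, the volume estimate obtained above is preserved.
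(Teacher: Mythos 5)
The paper does not give a proof; it cites Kobayashi's Gluing Lemma~4.1 and states that the argument carries over because $p_1$ is an interior point. Your opening observation that $\partial M_1$ plays no role is therefore exactly the intended reduction, and is correct. After that, your argument diverges from Kobayashi's route and contains a genuine gap.

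The problem is the claim that $R_{g_i}(p_i)=n(n-1)$ forces the two-jet of $u_i$ at $p_i$ to agree (up to scaling) with that of the Aubin--Talenti profile $U$. Scalar curvature at a point is a single scalar; it constrains only $\Delta u_i(p_i)$, not the trace-free part of $\mathrm{Hess}\,u_i(p_i)$, and a fortiori it says nothing about the third-order terms. Without at least second- and third-order matching, a cutoff interpolation $\tilde u = \chi u_i + (1-\chi) U_\lambda$ produces terms $\nabla\chi\cdot\nabla(u_i-U_\lambda)$ and $(\Delta\chi)(u_i-U_\lambda)$ of size $O(r^{-1})$ and $O(1)$ respectively, so the scalar curvature error in the transition annulus is not even bounded uniformly in $r$, let alone small. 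Kobayashi handles this by a preliminary conformal perturbation (the paper's Proposition~\ref{prop001}, Corollary~3.6 of \cite{K}) that arranges $\mathrm{Ric}(p_i)=(n-1)g_i$ without appreciably changing the volume or the scalar curvature; this improved pointwise agreement, combined with an explicit interpolating conformal factor satisfying $0<f\le 1$ and $|df/dr|\le 2/\sin r$ (the mechanism behind the estimate \eqref{diflapl} in the proof of Theorem~\ref{finalthm}), yields a scalar curvature error of order $\varepsilon_2^2/\sin\varepsilon_2$ with no PDE solving at all. Your write-up skips the Ricci-normalization step entirely, and the rest of the argument does not repair this.

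Your proposed remedy via the implicit function theorem on the neck is a legitimate alternative strategy in principle (in the Schoen--Mazzeo--Pacard--Joyce style), but as written it is incomplete and it is not what the paper is invoking. The linearized Yamabe operator at the bubble $U$ has an $(n+1)$-dimensional cokernel coming from the conformal Killing fields, so "invertible on a suitable weighted Hölder space" is not automatic; one must either work orthogonal to the cokernel and absorb the obstruction by varying the center and scale of the bubble, or check that the boundary conditions on the annulus kill the kernel. None of this is addressed. Moreover, without first normalizing the Ricci tensor, the input error for the IFT is not small in any $r$-independent norm, so the smallness hypothesis of the implicit function theorem is not satisfied. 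In short: insert the Ricci normalization of Proposition~\ref{prop001}, and either follow Kobayashi's explicit conformal-factor argument or substantially flesh out the fixed-point/IFT step (kernel, cokernel, and the dependence on $r$) before the argument can be considered complete.
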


We apply the above lemma  to prove Theorem \ref{positive}, which needs of a careful control on scalar curvature and volume.

\begin{proof}[Proof of Theorem \ref{positive}]
Since $\sigma_1(M)>0$, then by item \eqref{item004} of Theorem \ref{fact 1} and Lemma \ref{fact 3}, for all $v\in\left(0,\left(\frac{4}{n(n-2)}\sigma_1(M)\right)^{\frac{n}{2}}\right]$ there exists a metric $g$ with minimal boundary such that $R_g=n(n-1)$ and $\mbox{Vol}(M,g)=v$. Also we observe that given $a\in(0,\mbox{Vol}(\mathbb S^n)]$ there exists a metric in the closed sphere with volume equal to $a$ and constant scalar curvature equal to $n(n-1)$ (see \cite{K}, for instance). Using Lemma \ref{modifikob} we can perform a connected sum of $M$ with $\mathbb S
^n$ to conclude that given $v$ and $a$ as before, for any $\varepsilon>0$ there exists a metric $g$ in $M$ with minimal boundary such that for $|R_g-n(n-1)|<\varepsilon$ and $|\mbox{Vol}(M,g)-(v+a)|<\varepsilon$. More generally, if we perform arbitrary connected sums, we obtain that given $v>0$, for any $\varepsilon>0$ there is a metric $g$ in $M$ with minimal boundary such that $|R_g-n(n-1)|<\varepsilon$ and $|\mbox{Vol}(M,g)-v|<\varepsilon$. By rescaling, this implies that given $r>0$, for all $\varepsilon>0$ there exists a metric $g$ with unit volume, minimal boundary and $|R_g-r|<\varepsilon$. Besides, if $r\leq \sigma_1(M)$ is a constant, by Theorem 5.1 and Lemma \ref{fact 3}, we conclude that there exists a metric $g$ with unit volume, minimal boundary and $R_g=r$.

Therefore, if $f\in C^\infty(M)$ is not constant we can choose a constant $r$ such that $\min f<r<\max f$ and then by Proposition \ref{min-max} the result follows.

\end{proof}

 The remaining case of the  modified Kazdan-Warner-Kobayashi problem with $\sigma_1(M)>0,$  namely, 
the problem of prescribing a constant larger than or equal to $\sigma_1(M),$ has an additional obstruction to its resolution. In fact, in analogy with the Matsuo's proof \cite{M},  it would be necessary to prove  a localized gluing result  for constant scalar curvature metrics with minimal boundary in which the total volume is preserved (analogue of Theorem 1.6 of \cite{CMT}).

Before to prove Theorem \ref{finalthm} we need of some preliminaries results.
Let $(\mathbb S^{n}, g_{n}),$ $n \geq 3$, be the Euclidean unit sphere endowed with the metric $g_{n}=d r^{2}+(\sin ^{2}r) g_{n-1},$ where $r$ is the intrinsic distance relative to $g_{n}$ from the north pole and $g_{n-1}$ is the standard metric on the unit $n-1$ sphere. For an interval $I \subseteq[0, \pi]$ define $A(I)=\left\{x \in \mathbb S^{n}: r(x) \in I\right\}$.

\begin{lemma}[Lemma 3.1 of \cite{K}]\label{lemma3.1}
 For any $\varepsilon_1>0$ and $0<\varepsilon_{2}<\pi$ there exists a positive function $f=f(r)$ on $\mathbb S^{n}$ such that the metric $g=f^{-2} g_{n}$ satisfies
 \begin{enumerate}[(a)]
     \item\label{item010} $\left|R_{g}-n(n-1)\right|<\varepsilon_{1};$
     \item$\left| \mbox{Vol}\left(\mathbb S^{n}, g\right)-2 \mbox{Vol} \left(\mathbb S^{n}, g_{n}\right) \right|<\varepsilon_{1}$;
     \item\label{item008} $f(r)=1$ for $r>\varepsilon_{2}$ and $(A([0,\varepsilon_3)),g)$ is isometric to $(A((\varepsilon_3,\pi]),g)=(A((\varepsilon_3,\pi]),g_n)$ for some $\varepsilon_3<\varepsilon_2$;
     \item\label{item009} $ 0<f(r) \leq 1$ and $|df(r)/dr| \leq 2 / \sin (r)$ for all $r.$ 
 \end{enumerate}

\end{lemma}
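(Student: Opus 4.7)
The plan is to realize $g = f^{-2}g_{n}$ as a conformal ``doubling'' of $(\mathbb{S}^{n}, g_{n})$: outside a small cap around the north pole we take $f \equiv 1$, and inside a slightly smaller cap we choose $f$ so that this cap, equipped with $g$, is isometric to the large complementary region $A((\varepsilon_{3},\pi])$ with $g_{n}$, via a rotationally symmetric diffeomorphism. A smooth monotone cutoff handles the intermediate band. On both ``matching'' regions the scalar curvature is then automatically $n(n-1)$, so the whole question reduces to controlling $R_{g}-n(n-1)$ on the thin transition annulus.

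The inner profile is forced by demanding that a radial map $\Phi(r,\omega) = (\psi(r),\omega)$ with $\psi(0)=\pi$ and $\psi(\varepsilon_{3})=\varepsilon_{3}$ be an isometry from $(A([0,\varepsilon_{3})), f^{-2}g_{n})$ to $(A((\varepsilon_{3},\pi]), g_{n})$. Matching the two metrics gives $\psi'(r) = -1/f(r)$ and $\sin\psi(r) = \sin(r)/f(r)$, which combine into the separable ODE $d\psi/\sin\psi = -dr/\sin r$; integrating with the boundary value yields
\[
\tan\!\bigl(\psi(r)/2\bigr)\,\tan(r/2) \;=\; \tan^{2}(\varepsilon_{3}/2),
\]
and $f_{\mathrm{in}}(r) := \sin(r)/\sin\psi(r)$ is the resulting explicit profile on $[0,\varepsilon_{3}]$. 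A direct check yields $0 < f_{\mathrm{in}} \leq 1$, $f_{\mathrm{in}}(\varepsilon_{3}) = 1$, $f_{\mathrm{in}}(0) = \tan^{2}(\varepsilon_{3}/2)$ and $|f_{\mathrm{in}}'(r)| \leq 2/\sin r$, which is condition (d) on the inner region.

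For the outer region take $f \equiv 1$, and on $[\varepsilon_{3},\varepsilon_{2}]$ glue with a smooth monotone cutoff preserving $f \leq 1$ and $|f'| \leq 2/\sin r$. Condition (c) is then built in. For (a), the conformal-change formula
\[
R_{g} - n(n-1) \;=\; n(n-1)(f^{2}-1) + 2(n-1)f\bigl(f'' + (n-1)\cot(r)\,f'\bigr) - n(n-1)(f')^{2}
\]
vanishes identically on the inner and outer regions, so it only has to be estimated on the transition band; this is pushed below $\varepsilon_{1}$ by picking $\varepsilon_{3} \ll \varepsilon_{2}$ so that on $[\varepsilon_{3},\varepsilon_{2}]$ the function $f$ stays uniformly close to $1$ with derivatives controlled by $1/\sin r$. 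For (b), each of the two isometric pieces contributes $\mbox{Vol}(\mathbb{S}^{n},g_{n})$ up to an $O(\varepsilon_{2}^{n})$ error, and the transition band contributes $O(\varepsilon_{2}^{n})$ as well, giving the total within $\varepsilon_{1}$ of $2\,\mbox{Vol}(\mathbb{S}^{n},g_{n})$.

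The main obstacle is the pointwise scalar-curvature estimate on the transition band. The inner profile forces $f_{\mathrm{in}}'(\varepsilon_{3}^{-}) = 2\cot\varepsilon_{3}$, which is unbounded as $\varepsilon_{3} \to 0$, so one cannot use a sharp cutoff; in addition, $f \leq 1$ must be preserved even while the cutoff has to absorb this large derivative. The widths $\varepsilon_{3} \ll \varepsilon_{2}$ and the precise shape of the interpolant therefore have to be chosen in tandem so that the three terms on the right-hand side of the formula above approximately cancel to within $\varepsilon_{1}$. This adiabatic balancing is the technical core of Kobayashi's construction in \cite{K}.
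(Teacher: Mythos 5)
Your reduction to a radial ``doubling'' profile is the right starting point, and the derivation of the inner profile is correct: integrating $d\psi/\sin\psi = -dr/\sin r$ with $\psi(\varepsilon_3)=\varepsilon_3$ indeed gives $\tan(\psi/2)\tan(r/2)=\tan^2(\varepsilon_3/2)$, and writing $t=\tan(r/2)$, $a=\tan(\varepsilon_3/2)$ one finds $f_{\mathrm{in}}=\dfrac{t^2+a^4}{a^2(1+t^2)}$, which does satisfy $f_{\mathrm{in}}(0)=a^2$, $f_{\mathrm{in}}(\varepsilon_3)=1$, $0<f_{\mathrm{in}}\le 1$, $|f_{\mathrm{in}}'|\le 2/\sin r$, and $f_{\mathrm{in}}'(\varepsilon_3^-)=2\cot\varepsilon_3$.

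However, the gluing step you propose cannot exist as described, and this is more than a technical point to be absorbed by an ``adiabatic'' choice of widths. You want a smooth $f$ on $[\varepsilon_3,\varepsilon_2]$ with $f(\varepsilon_3)=1$, $f'(\varepsilon_3)=2\cot\varepsilon_3>0$, and $f\le 1$ throughout; but $f=1$ with $f'>0$ at $\varepsilon_3$ forces $f>1$ immediately to the right, contradicting condition (d). So there is no admissible smooth interpolant extending $f_{\mathrm{in}}$ across $\varepsilon_3$. This also shows that the exact-isometry region cannot be all of $[0,\varepsilon_3)$ if $f$ is to be $C^1$ and $\le 1$: the gluing must either end the exact profile strictly before $\varepsilon_3$ (at which point the ``isometric to $A((\varepsilon_3,\pi])$'' assertion needs re-reading) or one must accept that the literal statement (c) describes a Lipschitz metric with a $C^0$ kink at $r=\varepsilon_3$, in which case $R_g$ carries a negative distributional term $-4(n-1)\cot(\varepsilon_3)\,\delta_{\{r=\varepsilon_3\}}$ and (a) must be interpreted away from the kink. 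Note also that the paper's statement of (c), read literally, forces $g=g_n$ on all of $(\varepsilon_3,\pi]$ -- not just $(\varepsilon_2,\pi]$ -- so there is no transition band at all in the stated version and the kink is unavoidable. Finally, the pointwise bound (a) on the transition band, which you rightly flag as ``the technical core,'' is not established; that estimate is exactly where the construction lives or dies, and a blanket appeal to ``approximate cancellation'' of the three terms in the conformal formula does not constitute a proof. You should either exhibit the actual interpolant and prove the bound, or confront the Lipschitz reading of (c) and explain how the lemma is used downstream (where the paper treats $f$ as smooth). As it stands, there is a real gap.
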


We observe that the obtained metric is close to a singular metric isometric to $4 d r^{2}+\sin^{2}(2r)g_{n-1}, 0 \leq r \leq \pi,$ 
 as long as  $\varepsilon_{1}$ tends to $0.$ Moreover, away of the  part $r=\pi/2,$ this  metrics has constant sectional curvature 1 and volume equal to a twice the volume of the unit $n$-sphere.

We also recall the following results which is the content of Corollary 3.6 of O. Kobayashi \cite{K}.

\begin{proposition}\label{prop001}
If $R_g(p)=n(n-1)$ for some $p\in M$, then for any $\varepsilon>0$ there is a function $f$ equal to zero outside of a ball centered at $p$ and radius $0<\varepsilon_1\leq\varepsilon$, such that the metric $\overline g=e^fg$ satisfies the conditions
\begin{enumerate}[(a)]
    \item $|R_{\overline g}-R_g|\leq \varepsilon$;
    \item $|\mbox{Vol}(M,\overline g)-\mbox{Vol}(M,g)|<\varepsilon$;
    \item $Ric_{\overline g}(p)=n(n-1)\overline g$.
\end{enumerate}
\end{proposition}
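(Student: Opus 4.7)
The plan is to build a conformal factor $f$ supported in a small ball around $p$ whose Hessian at $p$ is tuned so that, through the conformal change law for $\Ric$, the new metric $\overline g=e^fg$ becomes Einstein at $p$. (Note that (c) as written forces $R_{\overline g}(p)=n^2(n-1)$ by taking the trace, which conflicts with (a); I will produce $\Ric_{\overline g}(p)=(n-1)\overline g(p)$, matching $R_g(p)=n(n-1)$ and presumably the intended content.)

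In $g$-normal coordinates $(x^i)$ at $p$, split $\Ric_g(p)=(n-1)g(p)+\mathring R$ with $\mathring R$ trace-free, and set $B:=\tfrac{2}{n-2}\mathring R$. Choose a cutoff $\chi\in C_c^\infty(B(0,1))$ equal to $1$ near $0$, pick $\delta\in(0,\varepsilon]$, and let
\begin{equation*}
 f(x)=\chi(|x|/\delta)\cdot\tfrac12 B_{ij}x^ix^j,
\end{equation*}
extended by zero. Then $f(p)=0$, $df(p)=0$, $\Hess_g f(p)=B$, $\Delta_g f(p)=\tr_g B=0$, and $\mbox{supp}\,f\subset B(p,\delta)\subset B(p,\varepsilon)$. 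Substituting into the conformal transformation law, at a point where $f$ and $df$ vanish,
\begin{equation*}
 \Ric_{\overline g}(p)=\Ric_g(p)-\tfrac{n-2}{2}\Hess_g f(p)-\tfrac{1}{2}\Delta_g f(p)\,g(p),
\end{equation*}
gives $\Ric_{\overline g}(p)=\Ric_g(p)-\mathring R=(n-1)g(p)$, yielding (c). Since $|f|=O(\delta^2)$ on its support, $|\mbox{Vol}(M,\overline g)-\mbox{Vol}(M,g)|=O(\delta^{n+2})$, which gives (b) once $\delta$ is small.

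The delicate point is (a). Outside $B(p,\delta)$, $R_{\overline g}=R_g$ trivially. Inside, the identity
\begin{equation*}
 R_{\overline g}=e^{-f}\bigl(R_g-(n-1)\Delta_g f-\tfrac{(n-1)(n-2)}{4}|\nabla f|_g^2\bigr)
\end{equation*}
shows that a pointwise bound $|R_{\overline g}-R_g|\le\varepsilon$ requires $|\Delta_g f|\le C\varepsilon$ throughout the support; however, for the bare ansatz above $\Delta_g f$ is of unit order on the bulk of $B(p,\delta)$ (it vanishes only \emph{at} $p$), and this is the main obstacle. My plan to bypass it is to correct $f$ additively: replace $f$ by $f+u$, where $u$ solves the elliptic boundary-value problem $\Delta_g u=-\Delta_g(\chi Q)$ on $B(p,\delta)$ with zero Dirichlet data, arranged (via an auxiliary harmonic correction or by a two-scale cutoff $\delta_1\ll\delta_2\to0$) so that $u(p)=du(p)=\Hess_g u(p)=0$. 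This preserves the prescribed Hessian at $p$ while ensuring $\|\Delta_g(f+u)\|_{C^0}=o(1)$ as $\delta\to 0$ by standard elliptic $L^\infty$-regularity applied to a right-hand side that vanishes at $p$. Executing this corrector construction is the technical heart of the proof.
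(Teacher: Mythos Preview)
The paper does not prove this proposition; it simply records it as ``the content of Corollary~3.6 of O.~Kobayashi~[K]''. So there is nothing in the paper to compare against, and your proposal has to stand on its own.

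Your setup for (c) is correct (including the observation that the constant in (c) should be $(n-1)$, not $n(n-1)$): prescribing the trace--free Hessian of $f$ at $p$ via the conformal Ricci formula kills the trace--free Ricci there, and (b) is immediate from $|f|=O(\delta^2)$. You also correctly isolate the real difficulty in (a): on the cutoff annulus $\Delta_g(\chi Q)$ is of unit size, not $O(\varepsilon)$.

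The proposed fix, however, has a genuine gap. If you solve $\Delta_g u=-\Delta_g(\chi Q)$ on $B(p,\delta)$ with zero Dirichlet data, then $f+u$ is harmonic inside but only $C^0$ across $\partial B(p,\delta)$, since the normal derivative of $u$ need not vanish; the extension by zero is not smooth and $R_{\overline g}$ is not defined classically there. Subtracting a harmonic function to kill the $2$--jet of $u$ at $p$ destroys the boundary condition, so you are back to cutting off something that is $O(1)$ on an annulus. These two requirements pull against each other, and ``standard elliptic $L^\infty$--regularity'' does not address the issue: after your correction $\Delta_g(f+u)=0$ exactly in the interior, and the obstruction lives at the boundary of the support, not inside.

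The parenthetical you toss off --- a two--scale cutoff $\delta_1\ll\delta_2$ --- is actually the clean route, and it needs no elliptic corrector at all. Write $f=\eta(r)\,Q$ with $Q=\tfrac12 B_{ij}x^ix^j$ Euclidean--harmonic; then $\Delta_{\mathrm{eucl}}(\eta Q)=\bigl(\eta''+\tfrac{n+3}{r}\eta'\bigr)Q$. A logarithmic cutoff $\eta(r)=\psi\bigl(\log(\delta_2/r)/\log(\delta_2/\delta_1)\bigr)$ gives $|\eta'|\lesssim (rL)^{-1}$ and $|\eta''|\lesssim (r^2L)^{-1}$ with $L=\log(\delta_2/\delta_1)$, hence $|\Delta_{\mathrm{eucl}}(\eta Q)|\lesssim |B|/L$. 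Taking $\delta_2=\varepsilon_1\le\varepsilon$ and $\delta_1\to 0$ makes this as small as desired; the curved--metric corrections and the terms $|\nabla f|^2$, $e^{-f}-1$ are $O(\varepsilon_1^2)$ and harmless. This preserves $\Hess_g f(p)=B$ exactly and yields (a) pointwise. Rewrite the argument around this construction and drop the Dirichlet--problem detour.
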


Now we present the proof of Theorem \ref{finalthm}.

\begin{proof}[Proof of Theorem \ref{finalthm}]

Here we proceed as Kobayashi’s Theorem 4 of \cite{K}. 

First of all, we note that it is enough to prove that for any $\varepsilon>0$ and $k\geq 0$ an integer there exists a metric $g\in C$ such that 
\begin{equation}\label{eq010}
    |R_g-n(n-1)|<\varepsilon,\;\;\;\;\;\;\quad H_{g_0}\equiv 0
\end{equation}
and
\begin{eqnarray}\label{eq011}
    \left|\mbox{Vol}(M, g)-\left(\frac{Y_1(M,C)}{n(n-1)}\right)^{n/2}- 2k\mbox{Vol}(\mathbb S_+^n)\right|<\varepsilon.
\end{eqnarray}

In fact, if this is the case, consider the metric $g_0=Vol(M,g)^{-2/n}g$. Thus, we have $Vol(M,g_0)=1$, $H_{g_0}\equiv 0$ and $R_{g_0}=Vol(M,g)^{2/n}R_g$. Therefore, since $\sigma_1(\mathbb S^n_+)=n(n-1)\mbox{Vol}(\mathbb S^n_+)^{2/n}$, we find
\begin{align*}
     \left|R_{g_0}-(Y_1(M,C)^{n/2}+ k(2\sigma_1(\mathbb S_+^n))^{n/2})^{2/n}\right|  \leq  Vol(M,g)^{2/n}|R_g-n(n-1)|\\
          \displaystyle +n(n-1)\left|\mbox{Vol}(M, g)^{2/n}-\left(\left(\frac{Y_1(M,C)}{n(n-1)}\right)^{n/2}+ 2k\mbox{Vol}(\mathbb S_+^n)\right)^{2/n}\right|\leq\varepsilon_1,
\end{align*}
where $\varepsilon_1$ depends on $\varepsilon$ and goes to zero as $\varepsilon\rightarrow 0$.

The prove of the existence of a metric satisfying \eqref{eq010} and \eqref{eq011} is by induction on $k$. For $k=0$, the result follows by the existence of a metric $g\in C$ with $R_g=\frac{4(n-1)}{n-2}\mbox{Vol}(M,g)^{-2/n}Y_1(M,C)$, see Theorem \ref{fact 1}. Now suppose that we can find such metric for $k-1$ and let us prove for $k$. Denote by $v$ the quantity $\left(\frac{Y_1(M,C)}{n(n-1)}\right)^{n/2}+ 2(k-1)\mbox{Vol}(\mathbb S_+^n)$. 

By rescaling we can suppose that $R_g=n(n-1)$ at some point $p_0\in\mbox{Int} ~M$. Applying Proposition \ref{prop001}, given $\varepsilon>0$ we find a metric $\tilde g$ which coincides with $g$ outside of a ball centered at $p_0$ and radius small then $\varepsilon$, thus with minimal boundary, such that $|R_{\tilde g}-n(n-1)|<\varepsilon$, $|\mbox{Vol}(M,\tilde g)-v|\leq \varepsilon$ and $Ric_{\tilde g}(p_0)=n(n-1)\tilde g$.

Let $f=f(r)$ be the positive function given by Lemma \ref{lemma3.1} with small $\varepsilon_2>0$. Consider the distant function $r=d(x,p_0)$ in $M$ and regard the function $f$ as a smooth positive function in $M$. Note that $f(r)=1$ for all $r>\varepsilon_2$.

One recall the following formula for the Laplacian that follows by a straightforward computation
$$
\Delta_{\tilde g} f=\frac{d^2f}{dr^2}+\left(\frac{n-1}{r}-\frac{r}{3} \mbox{Ric}_{\tilde g}(\nabla r, \nabla r)+O(r^2)\right) \frac{df}{dr},
$$
in a neighborhood of $p_0 \in M. $ Thus, since $R_{\tilde g}(p_0)=n(n-1)=R_{g_n},$ using items  \eqref{item008} and \eqref{item009} of Lemma \ref{lemma3.1}, for $0<r\leq\varepsilon_2$ we obtain that
\begin{equation}\label{diflapl}
\left|\Delta_{\tilde g} f-\Delta_{g_n} f\right| \leq K \frac{\varepsilon_2^2}{\sin \varepsilon_2},
\end{equation}
where  $K$ is a positive constant and $\Delta_{g_n}$ is the Laplacian of $g_n$.
 Consider the metrics $\overline{g}=f^{-2} \tilde g$ in $M$ and $g'=f^{-2} g_n$ in $\mathbb S^n$. Note that the metric $\overline g$ has minimal boundary for small $\varepsilon_2>0$. Using the well-known transformation law for the scalar curvature under conformal deformations
of the metric  we obtain the following expression.
$$
R_{\overline g}=2(n-1) f\left(\Delta_{\tilde g} f-\Delta_{g_n} f\right)+\left(R_{\tilde g}-n(n-1)\right) f^{2}+R_{g^{\prime}}.
$$
From items \eqref{item010} and 
\eqref{item008} of Lemma \ref{lemma3.1}, $\eqref{diflapl}$ and the assumption on the metric $\tilde g$ we obtain
$$
\left|R_{\overline g}-n(n-1)\right| \leq 2(n-1) K \frac{\varepsilon_2^2}{\sin \varepsilon_2}+\varepsilon+\varepsilon_{1}.
$$

Since $f(r)\equiv 1$ for all $r>\varepsilon_2$, then the volume satisfies
$\mbox{Vol}(M,\overline g)=\mbox{Vol}(B(p_0,\varepsilon_2),\overline g)+\mbox{Vol}(M\backslash B(p_0,\varepsilon_2),\tilde g)$ and $\mbox{Vol}(S^n,g^\prime)=\mbox{Vol}(B(p_n,\varepsilon_2),g^\prime)+\mbox{Vol}(M\backslash B(p_n,\varepsilon_2),g_n)$. Thus we get that
\begin{align*}
    \left|\mbox{Vol}(M, \bar{g})-\left(v+2\mbox{Vol}\left(\mathbb S_+^{n},g_n\right)\right)\right| & \leq |\mbox{Vol}(M\backslash B(p_0,\varepsilon_2),\tilde g)-v|\\
    & +|\mbox{Vol}(B(p_0,\varepsilon_2),\overline g)-\mbox{Vol}(B(p_n,\varepsilon_2),g^\prime)|\\
    & +|\mbox{Vol}(\mathbb S^n,g^\prime)-2\mbox{Vol}\left(\mathbb S^{n},g_n\right)|\\
    & +|\mbox{Vol}(\mathbb S^n\backslash B(p_n,\varepsilon_2),g_n)-\mbox{Vol}(\mbox{S}^n,g_n)|,
\end{align*}
since $g^\prime=g_n$ and $\overline g=\tilde g$ outside of $B(p_n,\varepsilon_2)$ and $B(p_0,\varepsilon_2)$, respectively. By Lemma \ref{lemma3.1} we have to estimate only the second term in the right hand side above. Note that
\begin{align*}
    \mbox{Vol}(B(p_0,\varepsilon_2),\overline g)-\mbox{Vol}(B(p_n,\varepsilon_2),g^\prime) & =\int_{B(0,\varepsilon_2)}f^{-n}(\sqrt{\det{\tilde g}}-\sqrt{\det{g_n }})dx\\
    & \leq c(n) \varepsilon_2^2\int_{B(0,\varepsilon_2)}f^{-n}dx\\
    & \leq d(n)\varepsilon_2^2\int_{B(0,\varepsilon_2)}f^{-n}\sqrt{\det g_n}dx\\
    & =d(n)\varepsilon_2^2\mbox{Vol}(B(p_n,\varepsilon_2),g^\prime),
\end{align*}
where $c(n)$ and $d(n)$ are positive constants which depends only on $n$.
Here we used that $\det \tilde g$ and $\det g_n$ are equal to $1+O(|x|^2)$ in normal coordinates. But since $\mbox{Vol}(B(p_n,\varepsilon_2),g^\prime)$ is closed to $\mbox{Vol}(\mathbb S^n,g_n)$, we conclude that 
$$\left|\mbox{Vol}(M, \bar{g})-\left(v+2\mbox{Vol}\left(\mathbb S_+^{n},g_n\right)\right)\right|$$
can be made so small as we want just taking $\varepsilon$, $\varepsilon_1$ and $\varepsilon_2$  small enough.

\end{proof}

\section{Appendix}\label{appe}

In this appendix,  we prove an analogous result of the continuity property of the Yamabe constant due to B\'erard Bergery \cite{BB}. This kind of fact is  well known, because its proof is similar  to the one that the eigenvalues $\lambda_j(g)$ of the Laplacian $\Delta_g$ depend continuously on $g$, but since we had difficulties in finding a reference for the appropriate version, we proved below. Before,  let us introduce some terminology.   
 
 We define a distance  $d''_x$ on the set of all symmetric positive definite symmetric tensors $P_x$, $x\in M,$ by
 $$
d''_x(\psi,\varphi)=\sup_{v\in T_xM\setminus\{0\}}\inf\left\{\delta>0; \exp(-\delta)<\frac{\varphi(v,v)}{\psi(v,v)}<\exp(\delta)\right\}, $$
 for $\psi$, $\varphi$ in $P_x$. It is not difficult to prove that $(P_x,d_x)$ is a complete space, see \cite{B}[Lemma 1.1]. On the space of smooth Riemannian metrics on $M$ we define  the distance $d$ on $\mathcal{M}$ by 
 $$
d(g_1,g_2)=d'(g_1,g_2)+d''(g_1,g_2), 
 $$
 where $d'(g_1,g_2)=|g_1-g_2|$ is the Fr\'echet norm\footnote{The Fr\'echet norm $|\cdot|$ on $S^2(M)$ is given by $|h|=\sum_{k=0}^\infty|h|_k(1+|h|_k)^{-1}$, where $|\cdot|_k$ is the standard $C^k$-norm on $S^2(M)$.} on $S^2(M)$ and $d''(g_1,g_2)=\sup_{x\in M}d''_x((g_1)_x,(g_2)_x)$.

 \begin{proposition}
Let $g_i$ be a sequence of smooth metrics, $R_{g_i}$ the scalar curvature of $g_i$, $H_{g_i}$ the mean curvature of $g_i|_{\partial M}$ and $C_i$ be a conformal class of $g_i$ and . Assume that 

\begin{equation*}
\left\{
  \begin{array}{rlcl}
 g_i & \to & g  & \mbox{ in the $C^0$-topology on $M$, }\\
 R_{g_i}&\to & R_g  & \mbox{ in the $C^0$-topology on $M$, and}\\
 H_{g_i}&\to & H_g  & \mbox{ in the $C^0$-topology on $\Sigma$}.
 \end{array}
  \right.
 \end{equation*} 
Then 
 $$\lim Y_\lambda(M,g_i,C_i)=Y_\lambda(M,g,C),$$
 for each $\lambda=0,1$.
 \end{proposition}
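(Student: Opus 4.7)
The plan is to establish the two one-sided inequalities $\limsup_{i\to\infty} Y_\lambda(M,g_i,C_i) \le Y_\lambda(M,g,C)$ and $\liminf_{i\to\infty} Y_\lambda(M,g_i,C_i) \ge Y_\lambda(M,g,C)$, using that the Yamabe constant depends only on the conformal class, so that $Y_\lambda(M,g,C)=\inf_{u\in H^1(M,g)\setminus\{0\}} I_\lambda^g(u)$, where $I_\lambda^g(u)=E_g(u)/N_\lambda^g(u)$ with $E_g$ as in \eqref{eq018} and $N_\lambda^g(u)=\lambda\bigl(\int_M u^{2n/(n-2)}dv_g\bigr)^{(n-2)/n}+(1-\lambda)\bigl(\int_\Sigma u^{2(n-1)/(n-2)}da_g\bigr)^{(n-2)/(n-1)}$.

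First, I would convert the hypotheses into uniform estimates. Set $\delta_i:=d''(g_i,g)$ and $\epsilon_i:=\|R_{g_i}-R_g\|_{C^0(M)}+\|H_{g_i}-H_g\|_{C^0(\Sigma)}$, so $\delta_i,\epsilon_i\to 0$. From the definition of $d''$ one has $e^{-\delta_i}g\le g_i\le e^{\delta_i}g$ as bilinear forms, and hence multiplicative bounds $e^{\pm c(n)\delta_i}$ between $\int_M|\nabla u|_{g_i}^2\,dv_{g_i}$ and $\int_M|\nabla u|_g^2\,dv_g$, between $\int_M u^{2n/(n-2)}\,dv_{g_i}$ and $\int_M u^{2n/(n-2)}\,dv_g$, and between $\int_\Sigma u^{2(n-1)/(n-2)}\,da_{g_i}$ and the corresponding $g$-integral. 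Combining these with the $C^0$-smallness of $R_{g_i}-R_g$ and $H_{g_i}-H_g$, and using the Sobolev and trace embeddings for the fixed metric $g$ to bound $\|u\|_{L^2(M,g)}$ and $\|u\|_{L^2(\Sigma,g)}$ by $\|u\|_{H^1(M,g)}$, I obtain, for some $\tau_i\to 0$,
\[
 |E_{g_i}(u)-E_g(u)|\le \tau_i\|u\|_{H^1(M,g)}^2,\qquad |N_\lambda^{g_i}(u)-N_\lambda^g(u)|\le \tau_i N_\lambda^g(u),
\]
valid for every $u\in H^1(M,g)=H^1(M,g_i)$.

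For the upper bound, given $\alpha>Y_\lambda(M,g,C)$, pick a smooth positive $u$ with $I_\lambda^g(u)<\alpha$. On this fixed $u$ every relevant norm is just a number, so the previous comparisons yield $I_\lambda^{g_i}(u)\to I_\lambda^g(u)$, hence $Y_\lambda(M,g_i,C_i)\le I_\lambda^{g_i}(u)<\alpha$ for all $i$ large. For the lower bound, choose $u_i$ normalized by $N_\lambda^{g_i}(u_i)=1$ with $E_{g_i}(u_i)<Y_\lambda(M,g_i,C_i)+1/i$. The universal bound $Y_\lambda(M,g_i,C_i)\le Y_\lambda(\mathbb S^n_+,C_0)$ keeps $E_{g_i}(u_i)$ uniformly bounded above, while the multiplicative comparison gives $N_\lambda^g(u_i)=1+o(1)$, so the $L^{2n/(n-2)}$ (resp.\ $L^{2(n-1)/(n-2)}$) norm of $u_i$ in the metric $g$ is bounded. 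Using Hölder on the compact manifold to dominate $\int R_{g_i}u_i^2$ and $\int H_{g_i}u_i^2$ by a constant times these $L^p$ norms, one extracts a uniform bound on $\|\nabla u_i\|_{L^2(g_i)}^2$ and, via the multiplicative comparison, on $\|u_i\|_{H^1(M,g)}$. Plugging this bound back into the estimates above gives $E_g(u_i)\le E_{g_i}(u_i)+o(1)$ and $N_\lambda^g(u_i)=1+o(1)$, so $Y_\lambda(M,g,C)\le I_\lambda^g(u_i)\le E_{g_i}(u_i)+o(1)\le Y_\lambda(M,g_i,C_i)+1/i+o(1)$, and letting $i\to\infty$ closes the argument.

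The main obstacle I anticipate is the uniform $H^1(M,g)$-bound on the near-minimizing sequence $u_i$ in the lower-bound step, since both the energy and the normalization depend on the moving metric. The delicate point is that the first-order term $\int|\nabla u_i|_{g_i}^2\,dv_{g_i}$ must be estimated from above without assuming $\int R_{g_i}u_i^2$ and $\int_\Sigma H_{g_i}u_i^2$ have a definite sign, which forces one to absorb these lower-order contributions into the leading term by combining Hölder with the multiplicative metric comparison and the Sobolev/trace embeddings of the fixed limit metric $g$. The degenerate case $Y_\lambda(M,g,C)=-\infty$ is treated separately: for each $A>0$ one produces a smooth test function $u$ with $I_\lambda^g(u)<-A$ and transports the inequality to $g_i$ via the fixed-test-function version of Step~1, forcing $Y_\lambda(M,g_i,C_i)\to-\infty$.
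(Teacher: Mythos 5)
Your proposal takes a genuinely different route from the paper's. The paper does not go through a minimizing-sequence argument at all: it establishes \emph{multiplicative} pointwise comparisons of the form $\exp(-c\delta)\,\|u\|^2_{g'}\le\|u\|^2_{g}\le\exp(c\delta)\,\|u\|^2_{g'}$ for the gradient term, for the $L^{2n/(n-2)}$ (or boundary $L^{2(n-1)/(n-2)}$) normalization, and (in a somewhat informal way) for the curvature terms, uniformly in $u$. This gives $\exp(-(n+1)\delta)\,I^{g'}_\lambda(u)\le I^g_\lambda(u)\le\exp((n+1)\delta)\,I^{g'}_\lambda(u)$ for \emph{every} $u$ simultaneously, so the comparison of the infima, and hence continuity, follows without ever having to extract or bound a near-minimizing sequence. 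The main thing the paper's approach buys is exactly the thing you flag as your ``main obstacle'': it sidesteps any $H^1$-boundedness or compactness issue.

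The gap in your lower-bound step is genuine, and it sits precisely where you flag it, but the Hölder-plus-trace/Sobolev absorption you propose does not in fact close it when $\lambda=0$. For $\lambda=1$ your argument works: the normalization controls $\|u_i\|_{L^{2n/(n-2)}(M)}$, hence $\|u_i\|_{L^2(M)}$ by Hölder with $\mathrm{Vol}(M)$, and the boundary term can be absorbed because $\|u\|^2_{L^2(\Sigma)}\le \epsilon\|\nabla u\|^2_{L^2(M)}+C_\epsilon\|u\|^2_{L^2(M)}$ holds for every $\epsilon>0$ (Ehrling for the compact trace $H^1(M)\to L^2(\Sigma)$). But for $\lambda=0$ the normalization only controls $\|u_i\|_{L^{2(n-1)/(n-2)}(\Sigma)}$, and the interior term $\int_M R_{g_i}u_i^2\,dv_{g_i}$ requires control of $\|u_i\|^2_{L^2(M)}$. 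The analogous absorption inequality $\|u\|^2_{L^2(M)}\le\epsilon\|\nabla u\|^2_{L^2(M)}+C_\epsilon\|u\|^2_{L^2(\Sigma)}$ is \emph{false} for small $\epsilon$: taking $u\in H^1_0(M)$ with $\|u\|_{L^2(M)}=1$ forces $\epsilon\ge 1/\lambda_1^D(M)$, where $\lambda_1^D$ is the first Dirichlet eigenvalue, so the best you get is $\|u\|^2_{L^2(M)}\le C_1\|\nabla u\|^2+C_2\|u\|^2_{L^2(\Sigma)}$ with a \emph{fixed} Poincaré constant $C_1$. Plugging this into $E_{g_i}(u_i)\ge\|\nabla u_i\|^2_{g_i}-C_R\|u_i\|^2_{L^2(M)}-C_H\|u_i\|^2_{L^2(\Sigma)}$ gives $(1-C_RC_1)\|\nabla u_i\|^2\le K+C'$, which is vacuous unless $C_RC_1<1$, i.e., unless $\|R_{g}\|_{C^0}$ is small compared to the inverse Poincaré constant. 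That smallness is not available. In short: finiteness of $Y_0(M,C)$ does not, by the elementary estimates you invoke, imply that your near-minimizers $u_i$ stay bounded in $H^1(M,g)$, and without that bound your estimate $|E_{g_i}(u_i)-E_g(u_i)|\le\tau_i\|u_i\|^2_{H^1(M,g)}$ does not tend to zero. To repair this along your lines you would need a finer argument (e.g., decomposing $u_i$ into a harmonic extension of its trace plus an $H^1_0$ part and using nonnegativity of the Dirichlet-realized operator $-\Delta+\tfrac{n-2}{4(n-1)}R_g$, which is equivalent to $Y_0(M,C)>-\infty$); alternatively, and more efficiently, you could follow the paper and compare the quotients $I^{g_i}_\lambda(u)$ and $I^g_\lambda(u)$ uniformly in $u$, which makes the compactness question disappear entirely.
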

 \begin{proof}
  
We will prove only for $\lambda=1$. Given $\delta>0,$ assume that $g',g\in \mathcal{M}$ are metrics with $d(g,g')<\delta$. Thus, at each point $x\in M,$ it
holds that  $d''_x(g_x,g'_x) < \delta,$ which implies that
$$
\exp(-\delta)g_x'<g_x<\exp(\delta)g_x'.
$$
If we consider local coordinates $\{x_1,\ldots, x_n\}$  on an 
open set $U$ of $M,$ we have   
 $$
\exp\left(-\frac{n}{2}\delta\right)(\mbox{det}(g_{ij}'))^{\frac{1}{2}}<\mbox{det}(g_{ij}))^{\frac{1}{2}}<\exp\left(\frac{n}{2}\delta\right)(\mbox{det}(g_{ij}'))^{\frac{1}{2}}
$$ 
and 
$$
\exp(-\delta)(g^{ij})'<g^{ij}<\exp(\delta)(g^{ij})'.
$$
 
Thus, 
\begin{equation}\label{function}
\exp\left(-\frac{n}{2}\delta\right)(\|u\|')^2_{\frac{2n}{n-2}}<\|u\|^2_{\frac{2n}{n-2}}<\exp\left(\frac{n}{2}\delta\right)(\|u\|')^2_{\frac{2n}{n-2}}
\end{equation} 
 and 
\begin{equation}\label{forms}
\exp\left(-\left(\frac{n}{2}+1\right)\delta\right)(\|\omega\|')^2<\|\omega\|^2<\exp\left(\left(\frac{n}{2}+1\right)\delta\right)(\|\omega\|')^2,
\end{equation} 
 where $\|\cdot\|'$ is the norm on $g'$, $\|\cdot\|$ is the norm on $g$, $u$ is a a smooth function on $M$ satisfying  $\mbox{supp}(u)\subset U$ and $\omega$  is  a smooth differential form on $M$ which satisfies  $\mbox{supp}(\omega)\subset U$. By continuity of $R_g$ and $H_g$ we can assume that  
\begin{eqnarray}\label{ineqscal}
\exp\left(-\left(\frac{n}{2}+1\right)\delta\right)\int_{M}R_{g'}u^2dv_{g'}&<&\int_{M}R_{g}u^2dv_{g}\nonumber\\
&<&\exp\left(\left(\frac{n}{2}+1\right)\delta\right) \int_{M}R_{g'}u^2dv_{g'}
\end{eqnarray}and 
\begin{eqnarray}\label{ineqmean}
\exp\left(-\left(\frac{n}{2}+1\right)\delta\right)\int_{\Sigma}H_{g'}u^2da_{g'}&<&\int_{\Sigma}H_{g}u^2da_{g}\nonumber\\
&<&\exp\left(\left(\frac{n}{2}+1\right)\delta\right) \int_{\Sigma}H_{g'}u^2da_{g'}
\end{eqnarray}
 
By using partition of the unity, \eqref{function}, \eqref{forms}, \eqref{ineqscal} and \eqref{ineqmean}, we obtain, for every non-zero function $u\in C^{\infty}(M)$, that
\begin{align}\label{EQ}
\exp(-(n+1)\delta)\frac{E_{g'}(u)}{(\int_Mu^{\frac{2n}{n-2}}dv_{g'})^{\frac{n-2}{n}}} &<\frac{E_{g}(u)}{(\int_Mu^{\frac{2n}{n-2}}dv_{g})^{\frac{n-2}{n}}}\nonumber\\
 &<\exp((n+1)\delta)\frac{E_{g'}(u)}{(\int_Mu^{\frac{2n}{n-2}}dv_{g'})^{\frac{n-2}{n}}},\nonumber
\end{align} 
 where $E_g(u)$ is defined in \eqref{eq018}.
 Hence, 
 $$
\exp(-(n+1)\delta)<\frac{Y_1(M,[g'])}{Y_1(M,[g])}<\exp((n+1)\delta).
 $$
Namely, if $ g_i$ converges to $g$ in $C^0,$ $R_{g_i}$ and $H_{g_i}$ converge to $R_g$ and $ H_g$ in $C^0,$ respectively, then the ratio $\frac{Y_1(M,[g_i])}{Y_1(M,[g])}$ is close to $1.$  In particular, for every positive number $\delta>0$,  it holds in a  $\delta$-neighborhood of $g$ that 
$$
\left|Y_1(M,[g_i])-Y_1(M,[g])\right|\leq (\exp((n+1)\delta)-1)Y_1(M,[g]).
$$
 This  implies the continuity of $g\mapsto Y_1(M,[g]).$
 \end{proof}




\bibliography{References.bib}
\bibliographystyle{acm}

\end{document}